\newtheorem{prop}{Proposition}
\newtheorem{thm}{Theorem}[section]
\newtheorem{lem}[thm]{Lemma}
\newtheorem{cor}[thm]{Corollary}
\newtheorem{example}[thm]{Examples}
\newtheorem{cnj}[thm]{Conjecture}
\theoremstyle{definition}
\theoremstyle{remark}
\newcommand{\mcA}{\mathcal{A}}
\newcommand{\mcB}{\mathcal{B}}
\newcommand{\mcM}{\mathcal{M}}
\newcommand{\mcR}{\mathcal{R}}
\newcommand{\mcV}{\mathcal{V}}
\newcommand{\mbbC}{\mathbb{C}}
\newcommand{\mbbQ}{\mathbb{Q}}
\newcommand{\mbbU}{\mathbb{U}}
\newcommand{\mbbZ}{\mathbb{Z}}
\title[Abelian Anyon Models]{In and around abelian Anyon Models}
\date{}
\author{Liang Wang}
\email{lwangmath@pku.edu.cn}
\address{Department of Mathematics, Peking University, Beijing 100871, China}
\author{Zhenghan Wang}
\email{zhenghwa@microsoft.com;zhenghwa@math.ucsb.edu}
\address{Microsoft Station Q and Department of Mathematics,
    University of California,
    Santa Barbara, CA 93106,
    U.S.A.}
\begin{document}

\begin{abstract}

Anyon models are algebraic structures that model universal topological properties in topological phases of matter and can be regarded as mathematical characterization of topological order in two spacial dimensions.  It is conjectured that every anyon model, or mathematically unitary modular tensor category, can be realized as the representation category of some chiral conformal field theory, or mathematically vertex operator algebra/local conformal net.  This conjecture is known to be true for abelian anyon models providing support for the conjecture.
We reexamine abelian anyon models from several different angles.  First anyon models are algebraic data for both topological quantum field theories and chiral conformal field theories.  While it is known that each abelian anyon model can be realized by a quantum abelian Chern-Simons theory and chiral conformal field theory, the construction is not algorithmic.  Our goal is to provide such an explicit algorithm for a $K$-matrix in Chern-Simons theory and a positive definite even one for a lattice conformal field theory.  Secondly anyon models and chiral conformal field theories  underlie the bulk-edge correspondence for topological phases of matter.  But there are interesting subtleties in this correspondence when stability of the edge theory and topological symmetry are taken into consideration.  Therefore, our focus is on the algorithmic reconstruction of extremal chiral conformal field theories with small central charges.  Finally we conjecture that a much stronger reconstruction holds for abelian anyon models: every abelian anyon model can be realized as the representation category of some non-lattice extremal vertex operator algebra generalizing the moonshine realization of the trivial anyon model.

\end{abstract}
\thanks{Z.W. is partially supported by NSF grant
FRG-1664351 and ARO MURI contract W911NF-20-1-0082.  We thank M. Bischoff,  T. Gannon, C. Meng, and E. Rowell for helpful comments and discussions.}
\maketitle

\section{Introduction}

The existence of anyons has received the strongest experimental support recently for abelian anyons in fractional quantum Hall (FQH) liquids \cite{B+20, Nakamura20}.  But the experiments to probe anyons in the bulk are not direct, instead measurements are made for edge states, which are modeled by the chiral conformal field theories ($\chi$CFTs) via the bulk-edge correspondence.
One main physical motivation for this work is the physical question how to realize abelian anyon models by FQH states in the Chern-Simons formalism and the corresponding edge physics of $\chi$CFTs, famously articulated as edge physics is the window to look into the bulk \cite{Wen92}.  The mathematical question is that given an abelian anyon model, how to algorithmically find a $K$-matrix for a Chern-Simons theory realization and a corresponding positive definite one for a corresponding lattice $\chi$CFT realization (with a small central charge).  We answered both questions in this paper and further refine the discussion by going beyond lattice $\chi$CFT realizations.  Non-lattice $\chi$CFT realization of anyon models can be regarded as a generalization of the Monstrous Moonshine module for the trivial anyon model, which could be regarded as the case of integer quantum Hall states.  Moreover, the realization of abelian anyon models by chiral CFTs is another positive instance of the realization conjecture for modular tensor categories in general \cite{TW17}.

Besides FQH physics, the $K$-matrix and $\chi$CFT for abelian anyon models also play important roles in the study of topological insulators and quantum cellular automata via the Walker-Wang model and systematical construction of fracton models.  There is a well-known subtlety in using abelian anyon models for quantum Hall physics: quantum Hall liquids are electron systems, while anyon models are for bosonic/spin systems.  The situation is well-understood and we will regard anyon models as the statistics sectors of anyon physics and the constituent electron is modelled by a charge sector.  The gluing of the statistics sectors and the charge sectors can be non-trivial, which is important for the detection of anyon statistics via edge current measurement \cite{CFW96}.  We will not further touch upon this subtlety in this paper.

The physics of the FQH effect continues to drive the development of topological physics.  It is not too much an exaggeration that topological physics is the reproduction of FQH effect in other physics systems.  A salient feature of FQH effect is the prediction of anyons as strongly supported by steady progress of experiments including the most recent ones \cite{B+20, Nakamura20}. The experimental probe of the bulk anyons is through the edge physics.  This remarkable bulk-edge correspondence between bulk anyon model and edge chiral conformal field theory deserves and inspires continued theoretical exploration.

The most robust and experimentally studied quantum Hall samples are those for filling fraction $\nu=1/3$.  The topological model of the bulk is via the anyon model $\mathbb{Z}_3\times sVec$, where we will refer to $\mathbb{Z}_3$ as the statistics sector and $sVec=\{1,f\}$ models the physical electron $f$, which was referred to as the charge sector.  It is well-established that the statistics sector of FQH is modeled by an anyon model/CS theory while the electron wave functions are conformal blocks of the edge conformal field theory as the famous Moore-Read state exemplified \cite{Wen92, MR}.  Further study of abelian anyon models will deepen our understanding on the one hand the physical study of edge physics of FQH states and on the other hand the general physics of bulk-edge correspondence.

Anyon models are algebraic structures that model universal topological properties in topological phases of matter and can be regarded as mathematical characterization of topological order in two spacial dimensions (e.g. see \cite{RW18}).  It is conjectured that every anyon model, or mathematically unitary modular tensor category, can be realized as the representation category of some chiral conformal field theory ($\chi$CFT), or mathematically vertex operator algebra (VOA)/local conformal net (LCN) \cite{TW17}. This conjecture is known to be true for abelian anyon models providing support for the conjecture, where one proof can be found in \cite{Nik1} and a more recent one in \cite{EG20}.  We reexamine abelian anyon models from several different angles.  First anyon models are algebraic data for both topological quantum field theories (TQFTs) and $\chi$CFTs.  While it is known that each abelian anyon model can be realized by a quantum abelian Chern-Simons (CS) TQFT and $\chi$CFT, the construction is not algorithmic (see e.g. \cite{Nik1, EG20, Cano14} and the references therein).  Our goal is to provide such an explicit algorithm for a $K$-matrix in CS theory and a positive definite one for a lattice $\chi$CFT.  Secondly anyon models and $\chi$CFTs underlie the bulk-edge correspondence for topological phases of matter.  But there are interesting subtleties in this correspondence when stability of the edge theory and topological symmetry are taken into consideration.  Therefore, our focus is on the algorithmic reconstruction of extremal $\chi$CFTs with small central charges.  Finally we conjecture that a much stronger reconstruction holds for abelian anyon models: every abelian anyon model can be realized as the representation category of some non-lattice extremal VOA generalizing the moonshine realization of the trivial anyon model.

Abelian anyon models are classified by metric groups---pairs $(A,\theta)$, where $A$ is a finite abelian group and $\theta(x)=e^{2 \pi i q(x)}, x\in A$ a non-degenerate quadratic form on $A$ \cite{JS93}.  Such a classification follows from two fundamental results: one is the classification of braided fusion categories by pre-metric groups, and the second is the classification of quadratic forms on finite abelian groups \cite{Wall63, Durfee}.  The full data of an abelian anyon model can be organized using the theory of abelian cohomology (see e.g. \cite{Galindo16}). 

The construction of an abelian CS theory for each abelian anyon model follows from the existence of a $K$-matrix, i.e., an even non-degenerate integral symmetric matrix.  The construction of a VOA for an abelian anyon model is reduced to the construction of a positive-definite ($+$-definite in the sequel) $K$-matrix regarded as a lattice.  The construction of a lattice VOA from such a lattice is part of the original work on VOAs realizing the moonshine module \cite{Borcherds86}.

Every anyon model corresponds to a unitary $(2+1)$-TQFT.  For abelian anyon models, the corresponding TQFTs are physically quantum abelian CS theories indexed by $K$-matrices.  We provide an algorithmic construction of such a $K$-matrix for each abelian anyon model.  The existence of such a $K$-matrix is known \cite{Nik1} and our refinement includes first an algorithmic constructin, and secondly the chiral central charge of the resulting CS theory is very small, which is important for physical applications.  We provide both a general method and concrete solutions for each case except the $B_{p^r}$ family when $p=1$ mod $8$\footnote{An explicit solution to this case depends on writing down a general quadratic non-residue for all $p$.}.  Our result relies on the classification of abelian anyon models and an adaption of the algorithm for constructing bilinear forms on finite abelian groups to $K$ matrices in \cite{Wall63}.

Gannon and the second author independently conjectured that every unitary modular tensor category or anyon model $\mathcal{B}$ can be realized as the representation category $\textrm{Rep}(\mcV)$ of a VOA $\mcV$ \cite{TW17}.  This conjecture is obviously true for quantum group categories $\mathfrak{G}_k$ as they are realized by the WZW models $\mathfrak{G}_k$.  Another large class is the quantum double of finite groups \cite{EG18}.  Systematical investigation for low rank anyon models also supports the conjecture \cite{Junla14, TW17}.  The challenging anyon models are those arising as Drinfeld centers in subfactor theories such as the famous doubled Haagerup.  The reconstruction conjecture would imply they could also also realized by VOAs in the same way that the toric code can be realized by $SO(16)_1$.  Further evidence is provided by abelian anyon models, which all can be reconstructed as representation categories of lattice VOAs \cite{Nik1, EG20}.   The first existence proof is not constructive and a new proof can be made more constructive. Our main technical tools include an adaption of an algorithm by Wall in \cite{Wall63} and a complement lattice lemma in \cite{EG20}.

Our motivation for the reconstruction is the bulk-edge correspondence of topological phases of matter.  The bulk-edge correspondence for abelian anyon models has been extensively investigated in \cite{Wen92, WZ92,Cano14} with a focus on boundary lattice VOAs.  Instead our interests lie in stability of the edge theory, equivariant correspondence, and non-lattice VOA boundaries \cite{Haldane95}.  The stability leads us to focus on extremal $\chi$CFTs with minimal central charge realization, hence a different emphasis from \cite{Nik1, EG20}: \cite{Nik1} focuses on abstract existence, while \cite{EG20} is a detailed reconstruction through lattice gluing.  

The genus of a unitary $\chi$CFT $\mcV$ is the pair $(\textrm{Rep}(\mcV),c)$, where $\textrm{Rep}\mcV$ is an anyon model and $c$ is the central charge. By \cite{Nik1}, the minimal genus of each abelian anyon model can always be realized.  While we provided many explicit reconstructions in the minimal genera, we  did not succeed in realizing every one.

Along the way, we can deduce some interesting consequences about abelian anyon models.  For example, the central charge of a cyclic anyon model $A$ is always of the opposite parity of the order of $A=p^r$ for some prime $p$. The number of prime factors of an abelian anyon model is not an invariant as the product of the semion theory with toric code is the same as the product with doubled semions.  Finally eight copies of any abelian anyon model is always a Drinfeld center, which also follows from the structure of the pointed part of the Witt group \cite{DMNO10}.

The content of the paper is as follows. In Sec. \ref{summary}, we summarize our results and illustrate our work using some examples.  In Sec. \ref{classification}, we list all prime abelian anyon models and calculate their central charges and symmetry groups.  In Sec. \ref{conkmatrix}, we construct a $K$-matrix for each prime abelian anyon model explicitly. In Sec. \ref{conlattice}., we construct a lattice VOA realization for each prime abelian anyon model.  Finally we made some conjectures in and around the stability, equivariant bulk-edge correspondence, and non-lattice boundary VOAs.

\subsection{Notations}

\begin{enumerate}

\item All lattices $\Lambda$ in this paper are non-degenerate and integral over $\mbbZ$ except their dual lattices $\Lambda^*$ and those explicitly stated otherwise.

\item A $K$-matrix $K_A$ associated to an abelian anyon model $(A,q)$ is an even non-degenerate integral symmetric matrix.  When $K_A$ is positive-definite (or $+$-definite), it is also a Gram matrix for a lattice $L_A$.

\item A metric group is a pair $(A,\theta)$, where  $A$ is a finite abelian group, and $\theta: A\rightarrow \mathbb{U}(1)$ is a non-singular quadratic form in the sense that the associated bi-character $\chi(x,y)=\frac{\theta(x+y)}{\theta(x)\theta(y)}$ is bi-multiplicative and non-degenerate. 

A metric group $(A,\theta)$ will be also denoted as $(A,q)$ for a quadratic form $q: A\rightarrow \mbbQ/\mbbZ$ such that $\theta(x)=e^{2\pi i q(x)}$.

\item An abelian anyon model is a unitary modular tensor category that realizes the metric group $(A,\theta)$, so the two terms will be used interchangeably.

\item  Given a lattice $L$, its discriminant group is $A_L=L^*/L$.  The generating matrix of $L$ will be denoted by $G_L$ and the Gram matrix by $K_L$.

The discriminant form, the quadratic form on the finite discriminant group $A$ for a non-degenerate even lattice, is $q_{2,A}: A \rightarrow \mathbb{Q}/2\mathbb{Z}$, i.e. $q_{2,A}(-x)=q_{2,A}(x)$ and the associated bilinear map $b(x,y)=\frac{1}{2}(q_{2,A}(x+y)-q_{2,A}(x)-q_{2,A}(y)): A\times A \rightarrow \mathbb{Q}/\mathbb{Z}$ is symmetric and non-singular, where $b(,)$ is the extension of the bilinear $<,>$ on lattice $\Lambda$ to its dual.

\end{enumerate}

\section{Summary and Examples}\label{summary}

In this section, we summarize our results and illustrate our work using some examples.  The classification of abelian anyon models can be derived from the literature from various resources \cite{JS93,Durfee,Wall63, Galindo16}.  We will follow the notation and work of \cite{Wall63} to list all primes ones Thm. \ref{classification_abelian} and note that there are relations among them that are also completely known \cite{Miranda84, KK80}.  The relations among various models illustrate a known phenomenon that the prime decomposition of abelian anyon models is not unique \cite{Mu03}, even the number of prime factors is not an invariant.

Our first contribution is to calculate the central charge and symmetry group  for each prime abelian anyon model in Thms. \ref{ calculate c}, \ref{Aut}.  The calculation of central charge can also be derived from the known literature, e.g. \cite{EG20}.  The symmetry group is new, though in one case we cannot decide the abstract group.

Our main interest is an algorithmic construction of $K$-matrices and positive definite ones for each prime anyon model with the smallest posible central charge.  Their existence is already known by \cite{Nik1}, so the new ingredient is an explicit construction, which the second author has been asked by several physicists over the years.
One of the interesting future directions is to generalize the Monster moonshine module from the trivial anyon model to all prime abelian anyon models.

We divide prime abelian anyon models into $8$ families in Thm.  \ref{classification_abelian}.  Families always come in pairs as each anyon model has a conjugate, which could the same. The $8$ families are: two for an odd prime $p$, and six for the prime $2$.  For an odd prime $p$, the rank is a prime power $p^r$ for $r\geq 1$.  When $r$ is even, then the abelian anyon model is always a quantum double \cite{Ardonne16}.  When $r$ is odd, the models naturally divide into four cases by the residue of $p$ mod $8$ with central charges $0,2,4,6$ mod $8$, respectively, and their conjugates.  For the prime $2$, the first four families are two pairs with central charges $1,3,5,7$ mod $8$.  There are two additional families with central charges $0,4$ mod $8$, which are more interesting: the $E$ family is a generalization of the famous toric code, while the $F$ family a generalization of the three fermion model.

For CS-theory, we need to find a $K$-matrix $K_A$ for an abelian anyon model $A$.  Our main tool is an adaption of an algorithm in \cite{Wall63}, which we refer to as the Wall algorithm.  For a chiral CFT realization, we need $K_A$ to be $+$-definite, which is significantly harder.

From each of the $8$ families, we pick one example to illustrate our results.
The realization of abelian anyon models by $+$-definite $K$-matrices can be regarded as certain generalization of Cartan ADE matrices.  
The ADE lattices are the most well-known $+$-definite $K$-matrices and realize the abelian anyon models $(A,q_{2,L})$ given by rows two and three in the following table:

\begin{tabular}{|c|c|c|c|c|c|c|}

\hline

\textrm{Lattice}&$A_{r-1}$&$D_{2s}$&$D_{2s+1}$&$E_6$&$E_7$&$E_8$\\
\cline{3-6}
\hline

\textrm{Discriminant group $A$}&$\mbbZ/r\mbbZ$&$\mbbZ_2\times \mbbZ_2$&$\mbbZ/4\mbbZ$&$\mbbZ/3\mbbZ$&$\mbbZ/2\mbbZ$&$1$\\
\cline{3-6}
\hline

\textrm{Discriminant form $q_{2,L}$}&$\frac{r-1}{r}$&$\{0,s/2,s/2,(s-4)/2\}$&$\frac{2s+1}{4}$&$\frac{4}{3}$&$\frac{3}{2}$&$0$\\
\hline
\end{tabular}

For the odd prime families, we will focus on $p=3$ and $p=5$ because they are related to the statistics sectors of FQH liquids at $\nu=\frac{1}{3}$ and $\nu=\frac{1}{5}$, respectively.  While the edge physics of $\nu=\frac{1}{3}$ is well-studied, our analysis for $p=5$ suggests there might be interesting edge physics at $\nu=\frac{1}{5}$, even possible new edge physics for the  integer quantum Hall and $\nu=\frac{1}{3}$.

For $p=3$, the anyon model denoted as $\mathbb{Z}_3$ has three anyon types with central charge=$2$ and $6$.  The realization of central charge =$2$ is the WZW model $SU(3)_1$.  The $K$ matrix is 
$\begin{pmatrix}
2 &1\\
1&2
\end{pmatrix}$, which is $+$-definite already.  The central charge=$6$ realization is the Cartan matrix $E_6$, which is also $+$-definite.

For $p=5$, the anyon model denoted as $\mathbb{Z}_5$ has five anyon types with central charge=$0$ and $4$ mod $8$.  The realization of central charge=$0$ mod $8$ will be the minimal central charge=$8$.  The solution for $c=8$ is unique given by the following $K$-matrix obtained from the classification of lattices \cite{CNbook}, while our algorithm produces a solution with $c=32$:

$$\begin{pmatrix}

2&-1& 0 &0&0&0&0&0\\
-1&2&1 &0&0&0&0&0\\
0&-1&2&-1 &0&0&0&0\\
0&0&-1&2&-1&0&-1&0\\
0&0&0&-1&2&-1&0&0\\
0&0&0&0&-1&2&0&-1\\
0&0&0&-1&0&0&2&1\\
0&0&0&0&0&-1&1&4

\end{pmatrix} .$$

For the central charge=$4$ mod $8$, our algorithm produces a minimal solution: 
$$\begin{pmatrix}
20 & -15 & 10 &-5\\
-15& 12 & -8 & 4\\
10 & -8 &6 & -3\\
-5 &4 & -3 & 2
\end{pmatrix} ,$$ which is $+$-definite already.

For the prime $2$, the anyon model denoted as $\mathbb{Z}_2$ has two anyon types with central charge=$1$ and $7$ mod $8$. They can be realized by WZW models $SU(2)_1$ and $(E_7)_1$, the semion and anti-semion pair.

The anyon model denoted as $\mathbb{Z}_4$ has $4$ anyon types, and there are two pairs.  The first pair can be realized by the $K$ matrices $(4)$ and Cartan matrix $D_7$.  The another pair can be realized by WZW models $SU(4)_1$ and $(D_5)_1$.

The toric code can be realized by WZW model $(D_8)_1$.  The next one in the $E$ family has $\mathbb{Z}_4\times \mathbb{Z}_4$ as fusion rules.  By our algorithm, to construct a positive definite $K$-matrix, we start with one of $B_4$, which can be chosen to be the $D_7$ as above.  Then we obtain the following  $16\times 16$ $+$-definite solution.  We do not know if there is an $8\times 8$ solution.

$$
\begin{pmatrix}
4 & 1& -1& 0& 0& 0& 0& 0& 0& -1& 0& 0& 0& 0& 0&0\\
1 & 4& 0& 0& 0& 0 & 0 & 0& 0& 0& 0& 0& 0 &0 & 0& 0\\
-1 &0 &2& 0& -1& 0 &0 &0 &0 &0 &0& 0 &0& 0& 0& 0\\
0 &0& 0& 2 &-1& 0 &0& 0 &0 &0 &0 &0 &0& 0 &0& 0\\
0 &0 &-1& -1& 2 &-1& 0& 0& 0& 0 &0 &0& 0 &0& 0 &0\\
0& 0 &0 &0 &-1& 2& -1 &0& 0& 0& 0& 0 &0 &0 &0 &0\\
0 &0& 0& 0 &0& -1& 2 &-1& 0& 0 &0 &0 &0 &0 &0 &0\\
0 &0& 0& 0& 0& 0 &-1& 2& -1& 0 &0 &0 &0 &0 &0 &0\\
0 &0 &0 &0& 0& 0& 0 &-1 &2& 0& 0 &0& 0& 0 &0& 0\\
-1 &0& 0 &0 &0 &0& 0 &0 &0 &2 &0 &-1 &0 &0 &0 &0\\
0 &0 &0 &0 &0& 0& 0& 0& 0& 0 &2 &-1 &0& 0 &0& 0\\
0 &0 &0& 0 &0 &0& 0 &0 &0 &-1& -1 &2 &-1& 0 &0 &0\\
0& 0& 0 &0& 0 &0 &0 &0 &0 &0 &0 &-1 &2 &-1 &0 &0\\
0& 0& 0& 0 &0& 0 &0 &0 &0 &0 &0& 0 &-1& 2 &-1 &0\\
0 & 0& 0& 0& 0 &0 &0 &0 &0 &0 &0 &0 &0 &-1 &2 &-1\\
0 &0 &0& 0 &0& 0 &0 &0 &0 &0 &0& 0 &0 &0 &-1 &2
\end{pmatrix}.
$$

The three fermion model denoted as $F_{\mathbb{Z}_2 \times \mathbb{Z}_2}$ is realized by $(D_4)_1$.  The next one in the family with fusion rules $\mathbb{Z}_2^2 \times \mathbb{Z}_2^2$ can be realized using the following $4\times 4$ $+$-definite $K$-matrix by our algorithm:

$$\begin{pmatrix}
8&20&-8&-4\\
20&-40&16&8\\
-8&16&-6&-3\\
-4&8&-3&-2 
\end{pmatrix}.
$$

For topological symmetries, for odd prime and the first $4$ families of prime=$2$, the only possible faithful symmetry is the charge conjugation $\mathbb{Z}_2$.  The $E$ families and $F$ famileis have very interesting large faithful symmetries.  For example, the generalized three fermion model with fusion rules $\mathbb{Z}_2^2 \times \mathbb{Z}_2^2$ has the topological 
symmetry group the dihedral group with $12$ elements $D_6$. It would be very interesting to explore the large symmetry groups in gauging and physical applications.

The enumeration, central charges, and relations among abelian anyon models can be used to deduce and realize the structure theorem of the pointed part of the Witt group \cite{DNO13}.  The pointed part of the Witt group decomposes into direct sum over primes.  The prime=$2$ part is $\mbbZ_8\oplus \mbbZ_2$, where the generators are a $\mbbZ_4$ model and the three-fermion.  When an odd prime $p=3$ mod $4$,  the $\mbbZ_4$ is generated by one with central charge $c=2$, while $p=1$ mod $4$, the $\mbbZ_2\oplus \mbbZ_2$ is generated by $c=4$ models.

\section{Classification of abelian anyon models and their central charges and topological symmetry groups}\label{classification}

In this section, we list the classification of prime abelian anyon models and calculate their central charges and topological symmetry groups.  While the classification and central charges can be deduced from the literature, the calculation of topological symmetry groups seems to be new.  The calculation of central charges is useful for identifying conjugate pairs of prime abelian anyon models and studying the pointed part of the Witt group of modular categories.

Abelian anyon models play an important role in the study of topological order and topological phases of matter.  As algebraic data for abelian quantum Chern-Simons theories, they are realized by abelian fractional quantum Hall states. 

Mathematically, abelian anyon models $\mcA$ are pointed unitary modular tensor categories.  There is a one-one correspondence between metric groups $(A,q)$ and abelian anyon models $\mcA$\footnote{The correspondence between metric groups and pointed modular tensor category is not one-to-one without unitarity.  There are $8$ premodular categories of fusion rule $\mathbb{Z}_2$: four are modular, and four unitary.  Among them, there are only two unitary modular categories: the semion and anti-semion theories.  There is a non-unitary modular category with the same $T$ matrix as semion: Temperley-Lieb theory with Kauffman variable $A=e^{\pm \frac{2\pi i}{12}}$ for $r=3$. }.
Thus, the classification of abelian anyon models is reduced to the classification of metric groups, which can be deduced from Wall's classification of unimodular symmetric bilinear forms $b(\cdot,\cdot): L_A\times L_A \rightarrow \mathbb{Q}/\mathbb{Z}$ on free abelian groups $L_A$ and many other places \cite{Wall63, Durfee}.

\subsection{Prime decomposition of abelian anyon models}

In an abelian anyon model $\mcA$, the anyon types form a finite abelian group $A$ under fusion or tensor product.  The topological twist 
$\theta: A\rightarrow  \mbbU(1)$ of each anyon type is a non-degenerate quadratic form on $A$.  Besides $\theta$, we will also use the notation $(A,q)$ to denote a metric group, where $q: A\rightarrow {\mbbQ}/{\mbbZ}$ is a quadratic form such that $\theta(a)=e^{2\pi i q(a)}$.  

Every modular tensor category decomposes as a Deligne product $\boxtimes$ of prime ones \cite{Mu03}.  Prime modular categories are those that are not any non-trivial products.  Such a prime decomposition is not always unique.
A general family of counterexamples are the Drinfeld centers of different anyon models with the same underlying unitary fusion categories because the Drinfeld center is independent of the braidings, e.g. two copies of the toric code is the same as two copies of the three-fermion. Moreover, the number of prime factors in prime  decompositions can be different as the example $D\mbbZ_2 \boxtimes \textrm{Semion}=D\textrm{Semion}\boxtimes \textrm{Semion}$ shows.

The Deligne product $\boxtimes$ of abelian anyon models corresponds to direct sum $\oplus$ of metric groups.  Therefore, a classification of abelian anyon models is the same as a classification of finite groups with non-degenerate quadratic forms and the relations among them.  Our list follows from Wall's classification of quadratic forms on finite abelian groups.  Wall also found all relations among the generators for the odd prime case.  The relations for the prime=$2$ case is much more complicated, but have been worked out in \cite{Miranda84,KK80}.

\subsection{Classification of prime abelian anyon models}

There are eight families of prime abelian anyon models with known nontrivial relations among them.  We follow Wall's notation.

\begin{thm}\label{classification_abelian}
There are eight families of prime abelian anyon models $(A,\theta)$ whose topological twists $\theta(a)=e^{2\pi i q(a)}, a\in A$ are given by one of the following maps $q(a): A\rightarrow \mbbQ/\mbbZ$ on the finite abelian group $A$. 

(a): Let $p$ be an odd prime and $1$ the generator of the cyclic group $A=\mbbZ_{p^r}$: 
\begin{align}
A_{p^r} \ :\ q(1) = \frac{m}{p^r}, \ \text{for some }\ 1\leq m < p, \ (m,p)=1  \ \text{and}  \ \Big (\frac{2m}{p}\Big )=1.\\
B_{p^r} \ :\ q(1) = \frac{n}{p^r}, \ \text{for some }\ 1\leq n< p, \ (n,p)=1  \ \text{and} \ \Big (\frac{2n}{p}\Big )=-1,
\end{align}
where $\Big (\frac{x}{p}\Big )$ is the Legendre symbol.

The different choices of $m,n$ in each family lead to the same theory.  Theories come in conjugate pairs and the two families for $p=-1$ mod $4$ are conjugates of each other.

(b): There are six families for the prime=$2$.  The first 
four of which are for the cyclic group $A=\mbbZ_{2^r}$, where for $A_{2^r}$ and $B_{2^r},r\geq1$, for $C_{2^r}$ and $D_{2^r},r\geq2$:
\begin{align}
A_{2^r}\ :\  q(1)=\frac{1}{2^{r+1}} ,\\
B_{2^r}\ :\  q(1)=-\frac{1}{2^{r+1}} ,\\ 
C_{2^r}\ :\  q(1) = \frac{5}{2^{r+1}} ,\\ 
D_{2^r}\ :\  q(1) = \frac{-5}{2^{r+1}} . 
\end{align}

These four families consist of two conjugage pairs.  For $r=1$, they are the Semion and anti-Semion pair.

Two additional families are for the abelian groups $A=\mbbZ_{2^r} \times \mbbZ_{2^r}$ with basis $e_1=(1,0),e_2=(0,1)$. Denote $e_\alpha=me_1+ne_2$ for $\alpha=(m,n) \in \mbbZ^2, 0\leq m,n\leq 2^r-1$, then:

\begin{align}
E_{2^r}\ :\ q(e_{(m,n)})=\frac{mn}{2^r},
\end{align}
\begin{align}
F_{2^r}\ : \ q(e_{(m,n)})=\frac{m^2+n^2+mn}{2^r}.
\end{align}

For $r=1$, the $E_{2^r}$ model is the toric code, while $F_{2^r}$ the three-fermion theory.

\end{thm}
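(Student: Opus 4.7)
The plan is to first reduce the classification to the $p$-primary case, then for each prime $p$ enumerate the indecomposable blocks of a non-degenerate quadratic form on a finite abelian $p$-group. Concretely, for a metric group $(A,q)$, the associated bicharacter $\chi(x,y)=\theta(x+y)\theta(x)^{-1}\theta(y)^{-1}$ pairs elements of coprime order trivially, so the Sylow decomposition $A=\bigoplus_p A_p$ is automatically orthogonal with respect to $\chi$, and $q$ restricts to a non-degenerate quadratic form on each $A_p$. Consequently, every prime metric group must be $p$-primary for a single prime $p$, and the full classification reduces to listing the indecomposable non-degenerate quadratic forms on finite abelian $p$-groups.

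For odd $p$, I would invoke the fact that any finite abelian $p$-group decomposes as $\bigoplus_i \mbbZ_{p^{r_i}}$, and that for $p$ odd the associated bilinear form admits a diagonalization (the standard ``odd $p$'' analogue of Gram--Schmidt works because $2$ is a unit in $\mbbZ_{p^r}$). Hence every non-degenerate quadratic form on a $p$-group splits orthogonally into rank-one pieces $q(1)=\frac{u}{p^r}$ with $u\in(\mbbZ/p^r\mbbZ)^\times$. Multiplying the generator $1\in\mbbZ_{p^r}$ by $k\in(\mbbZ/p^r\mbbZ)^\times$ replaces $u$ by $k^2 u$, so the equivalence class of $u$ is exactly its class in $(\mbbZ/p^r\mbbZ)^\times/((\mbbZ/p^r\mbbZ)^\times)^2$; since this quotient has order two detected by the Legendre symbol $\bigl(\tfrac{2u}{p}\bigr)$, we obtain exactly the two families $A_{p^r}$ and $B_{p^r}$. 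Primeness of each is immediate because the underlying group is cyclic of prime power order. The conjugate-pair statement follows by noting $q \mapsto -q$ flips the Legendre symbol precisely when $-1$ is a non-square mod $p$, i.e.\ when $p\equiv -1\pmod 4$.

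For $p=2$, the delicate case, I would use the structure theorem for quadratic forms on finite abelian $2$-groups (Wall, together with the clarifications in \cite{Miranda84, KK80}): every non-degenerate $q$ on a finite abelian $2$-group is an orthogonal sum of indecomposable blocks, each of which is either (i) a rank-one block on $\mbbZ_{2^r}$ with $q(1)\in\{\tfrac{1}{2^{r+1}},-\tfrac{1}{2^{r+1}},\tfrac{5}{2^{r+1}},-\tfrac{5}{2^{r+1}}\}$ (the four classes $(\mbbZ/2^{r+1}\mbbZ)^\times/\text{squares}=\{\pm 1,\pm 5\}$), or (ii) one of the two indecomposable rank-two hyperbolic blocks $E_{2^r}$ and $F_{2^r}$ on $\mbbZ_{2^r}\times\mbbZ_{2^r}$ (the split hyperbolic plane and its non-split twist). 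The four cyclic classes give $A_{2^r}$, $B_{2^r}$, $C_{2^r}$, $D_{2^r}$. The rank-two blocks arise because when $r\geq 1$ one cannot diagonalize over $\mbbZ_{2^r}$: the form $\bigl(\begin{smallmatrix}0&1\\1&0\end{smallmatrix}\bigr)/2^r$ has no isotropic-free diagonalization since its diagonal entries are $0$, and similarly for the twisted form whose diagonal is not both zero. Checking that these two rank-two forms are non-isomorphic reduces to comparing their Arf-type invariants (parity of $q$ on the $2$-torsion subgroup), which distinguishes $E_{2^r}$ (value $0$) from $F_{2^r}$ (value $\tfrac{1}{2^r}\cdot 3$).

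The main obstacle is the completeness of the list in (b), i.e.\ showing that every non-degenerate quadratic form on a finite abelian $2$-group decomposes into these specific indecomposables and nothing else. The clean approach is a Jordan-style normal form: reduce to the case where all cyclic summands have the same order $2^r$ (by first splitting off higher-order summands using a maximality/orthogonal-complement argument), and then classify non-degenerate quadratic forms on $(\mbbZ_{2^r})^n$ by induction on $n$, peeling off one block at a time by choosing a vector of minimal $2$-adic valuation in its $q$-value. At each step one must rule out ``exotic'' indecomposables of rank $\geq 3$; this is the step where the $p=2$ case genuinely differs from odd $p$, and it is precisely the content of Wall's Theorem 5 in \cite{Wall63}, which I would invoke rather than reprove. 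The odd-$p$ relations and the $p=2$ relations \cite{Miranda84, KK80} then feed into the later sections but are not needed for the enumeration itself.
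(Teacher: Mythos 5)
Your route is essentially the paper's route: the paper states this theorem without proof, attributing the enumeration to Wall's classification of quadratic forms on finite abelian groups \cite{Wall63} (with \cite{Durfee} and, for the relations, \cite{Miranda84, KK80}), and your proposal likewise defers the one genuinely hard step---completeness of the list of indecomposables at $p=2$---to Wall's Theorem 5. The parts you do argue yourself (orthogonality of the Sylow decomposition under the bicharacter, so prime forces $p$-primary; diagonalization for odd $p$ since $2$ is a unit; the two square classes in $(\mbbZ/p^r\mbbZ)^\times$ detected by the Legendre symbol; conjugation $q\mapsto -q$ swapping $A_{p^r}$ and $B_{p^r}$ exactly when $p\equiv -1 \pmod 4$) are correct and are the standard reductions implicit in the paper's citation.

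Two slips in your $p=2$ discussion should be repaired, even though both facts are subsumed by the Wall statement you invoke. First, your proposed invariant separating $E_{2^r}$ from $F_{2^r}$---``parity of $q$ on the $2$-torsion subgroup''---fails for $r\geq 2$: the $2$-torsion is generated by $(2^{r-1},0)$ and $(0,2^{r-1})$, and on it both forms take values in $2^{r-2}\mbbZ\subseteq\mbbZ$, hence vanish mod $1$; your quoted value $\tfrac{3}{2^r}$ is $q_F(e_{(1,1)})$, which is not a $2$-torsion element for $r\geq 2$. A correct quick distinction: $E_{2^r}$ has a primitive isotropic vector ($q(e_1)=0$), whereas for $F_{2^r}$ the numerator $m^2+mn+n^2$ is odd for every primitive $(m,n)$ (reduce mod $2$), so $q$ never vanishes on any element of full order $2^r$. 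Second, the square-class computation $(\mbbZ/2^{r+1}\mbbZ)^\times/\text{squares}=\{\pm1,\pm5\}$ holds only for $r\geq 2$; for $r=1$ there are just two classes since $5\equiv 1\pmod 4$, which is precisely why the theorem introduces $C_{2^r}$ and $D_{2^r}$ only for $r\geq 2$---your write-up should record this restriction to match the stated ranges. Neither slip breaks the proof as a whole, since completeness and distinctness of the blocks rest on \cite{Wall63} in both your proposal and the paper, but as written those two sentences are incorrect.
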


We will refer to them as families $A,B,C,D,E,F$ in the future.

Among the odd prime=$p$ abelian anyon models, the only nontrivial relations are $2A_{p^r}=2B_{p^r}$.  For prime $p=2$, there are many nontrivial relations which are determined and completely listed in \cite{Miranda84, KK80}.  Of particular interest is the fact that four copies of any theory with the same group $A$ give the same theory for all theories for $p=2$.  The braidings and $6j$-symbols of all the anyon anyon models are given by third abelian cohomology, which can be found e.g. in \cite{Galindo16}.

\subsection{Central charges}

Given any modular tensor category with label set $\Pi$, there is a rational number $c$ which is defined modulo $8$ by the equation:

$$\frac{\sum_{a\in \Pi}\theta_a d_a^2}{\sqrt{\sum_{a\in \Pi} d_a^2}}=e^{\frac{\pi i c}{4}}.$$
For abelian anyon models, all $d_a=1$, hence the central charge $c$ is determined by 
$$ \frac{\sum_{a\in A}\theta_a}{\sqrt{|A|}}=e^{\frac{\pi i c}{4}}.$$
Since the central charge is additive under Deligne product, one consequence of our calculation is that the central charge of an abelian anyon model is always an integer, and with the opposite parity of the order of $A$ in the families $A,B,C,D$.

The central charges of all eight families of prime abelian anyon models in Thm. \ref{classification_abelian} are calculated and listed below.  They can also be deduced from the literature, e.g. \cite{EG20}.  Since the central charge is additive with respect to the direct sum decomposition, therefore the central charge of any abelian anyon model follows.

\begin{thm}\label{ calculate c}

The central charges of the eight families of prime abelian anyon models are as in Table \ref{cctable}.

\begin{table}[!htbp]\label{cctable}
  \centering 
 \caption{The relations of $p, r,c$}  
 
\begin{tabular}{|c|c|c|c|c|c|}

\hline
\multirow{2}*{}& \multirow{2}*{r even } &\multicolumn{4}{|c|}{r odd}\\
\cline{3-6}

{}&{}&$p\equiv1 $\;mod\;8&$p\equiv-1 $\;mod\;8&$p\equiv-3 $\;mod\;8&$p\equiv3 $\;mod\;8\\
\cline{3-6}
\hline

$A_{p^r}$&0&0&2&4&6\\
\cline{3-6}
\hline

$B_{p^r}$&0&4&6&0&2\\
\hline
\end{tabular}

\begin{tabular}{|c|c|}

\hline  
{}&all r\\
\hline  
$A_{2^r}$&1\\
\hline
$B_{2^r}$&7\\
\hline
$E_{2^r}$&0\\
\hline 
\end{tabular}

\begin{tabular}{|c|c|c|}
\hline  
{}& r even&r odd\\
\hline  
$C_{2^r}$&5&1\\
\hline
$D_{2^r}$&3&7\\
\hline
$F_{2^r}$&0&4\\
\hline 
\end{tabular}

\end{table}

More concretely, if $p$ is an odd prime and $1$ the generator of $\mbbZ_{p^r}$, then:
\begin{align}
A_{p^r} \ :\ q(1) = \frac{m}{p^r}, \ \text{where } (m,p)=1\ \text{and} \ \Big (\frac{2m}{p}\Big )=1,\\
c\equiv\begin{cases}0&\text{r even };\text{or r odd }p\equiv1\; mod\;8\\2&\text{r odd }p\equiv-1 \;mod\;8\\6&\text{r odd }p\equiv3\; mod\;8\\4&\text{r odd }p\equiv-3 \;mod\;8\\\end{cases}.\nonumber\\
B_{p^r} \ :\ q(1) = \frac{n}{p^r}, \ \text{where } (n,p)=1\ \text{and} \ \Big (\frac{2n}{p}\Big )=-1, \\
c\equiv\begin{cases}0&\text{r even };\text{or r odd }p\equiv-3\; mod\;8\\4&\text{r odd }p\equiv1\; mod\;8\\6&\text{r odd }p\equiv-1\; mod\;8\\2&\text{r odd }p\equiv3\; mod\;8\\\end{cases}\nonumber \\
\end{align}

For the other six families, four of which for $\mbbZ_{2^r}$ are listed as below:
\begin{align}
A_{2^r}\ :\  q(1)=\frac{1}{2^{r+1}}, c\equiv 1 \; mod \;8.\\
B_{2^r}\ :\  q(1)=-\frac{1}{2^{r+1}}, c\equiv 7\; mod\; 8.\\
C_{2^r}\ :\  q(1) = \frac{5}{2^{r+1}},c\equiv \begin{cases} 1&\text{ r odd }\\ 5&\text{ r even}
\end{cases}\;mod\; 8. \\
D_{2^r}\ :\  q(1) = \frac{-5}{2^{r+1}},c\equiv \begin{cases} 7&\text{ r odd } \\ 3&\text{ r even}
\end{cases}\;mod\; 8.
\end{align}
Note that for $A_{2^r}$ and $B_{2^r},r\geq1$, for $C_{2^r}$ and $D_{2^r},r\geq2$.

The last two families have abelian groups $\mbbZ_{2^r} \times \mbbZ_{2^r}$ with basis $e_1=(1,0),e_2=(0,1)$. Denote $e_\alpha=me_1+ne_2$ for $\alpha=(m,n) \in \mbbZ^2$.
\begin{align}
E_{2^r}\ :\ q(e_{(m,n)})=\frac{mn}{2^r},  c\equiv 0\; mod\; 8.\\
F_{2^r}\ : \ q(e_{(m,n)})=\frac{m^2+n^2+mn}{2^r},\;
 c\equiv \begin{cases} 4&\text{ r odd }\\ 0&\text{ r even}
\end{cases}\;mod\;8. \nonumber\\
\end{align}

\end{thm}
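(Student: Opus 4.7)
The identity defining $c$ reduces the theorem to evaluating a single Gauss sum
$$G(A,q)=\sum_{a\in A}e^{2\pi i q(a)}$$
for each of the eight prime families and matching $G/\sqrt{|A|}$ with $e^{\pi i c/4}$. My plan is a case-by-case evaluation: classical quadratic Gauss sums dispatch the six cyclic families, the hyperbolic form $E$ is a one-line calculation, and family $F$ is the main obstacle.

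\textbf{Odd prime families.} For $(\mbbZ_{p^r},ka^2/p^r)$ with $p$ odd and $\gcd(k,p)=1$ I would invoke the classical evaluation
$$\sum_{a=0}^{p^r-1}e^{2\pi i k a^2/p^r}=\left(\tfrac{k}{p}\right)^{\!r}\varepsilon(p^r)\sqrt{p^r},$$
where $\varepsilon(N)=1$ for $N\equiv 1\pmod 4$ and $\varepsilon(N)=i$ for $N\equiv 3\pmod 4$. For $r$ even the Jacobi symbol is $+1$ and $p^r\equiv 1\pmod 4$, so $G/\sqrt{|A|}=1$ and $c\equiv 0\pmod 8$. For $r$ odd I would combine the defining hypotheses $\big(\tfrac{2m}{p}\big)=1$ or $\big(\tfrac{2n}{p}\big)=-1$ with the classical value of $\big(\tfrac{2}{p}\big)$ as a function of $p\bmod 8$ to pin down $\big(\tfrac{k}{p}\big)$, then multiply by $\varepsilon(p^r)$ controlled by $p\bmod 4$; the four subcases listed for each family fall out by pure Legendre-symbol bookkeeping.

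\textbf{Cyclic $p=2$ families.} The relevant sum is $\sum_{a\bmod 2^r}e^{2\pi i k a^2/2^{r+1}}$ for $k\in\{\pm 1,\pm 5\}$. The identity $e^{2\pi i k(a+2^r)^2/2^{r+1}}=e^{2\pi i ka^2/2^{r+1}}$ for $r\ge 1$ and $k$ odd shows this sum equals exactly half of the complete quadratic Gauss sum $g(k,2^{r+1})$, whose value is classical: for $k=1$ one has $g(1,2^{r+1})=(1+i)\sqrt{2^{r+1}}$, and general odd $k$ is handled either by direct evaluation (splitting the squares mod $2^{r+1}$ into the four residue classes of $a\bmod 4$) or by Landsberg--Schaar reciprocity, producing a phase that depends on $k\bmod 8$ and $r\bmod 2$. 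Dividing by $\sqrt{|A|}=\sqrt{2^r}$ and specializing to $k=1,-1,5,-5$ yields $c\equiv 1\pmod 8$ for $A_{2^r}$, $c\equiv 7\pmod 8$ for $B_{2^r}$ (by complex conjugation), and the $r$-parity splits $(1,5)$ for $C_{2^r}$ and $(7,3)$ for $D_{2^r}$ recorded in the table.

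\textbf{Families $E_{2^r}$ and $F_{2^r}$.} The hyperbolic case $E_{2^r}$ is immediate: summing $n$ first inside $m$ gives $\sum_n e^{2\pi i mn/2^r}=2^r\delta_{m,0}$, so $G=2^r=\sqrt{|A|}$ and $c\equiv 0\pmod 8$. The form $Q(m,n)=m^2+mn+n^2$ of family $F$ is the main obstacle because it is not $2$-adically diagonal and the completion-of-square substitution $u=2m+n$ (producing $4Q=u^2+3n^2$) is only injective with index-$2$ image in $(\mbbZ/2^r)^2$. My plan is to first verify $r=1$ directly ($Q$ takes the value $0$ once and $1/2\pmod 1$ three times, giving $G=-2$ and $c=4$) and then, for $r\ge 2$, exploit the Eisenstein identification $Q(m,n)=N_{\mbbQ(\omega)/\mbbQ}(m+n\omega)$ with $\omega=e^{2\pi i/3}$: since $2$ is inert in $\mbbZ[\omega]$, the sum reduces to a Gauss sum over the ring $\mbbZ[\omega]/2^r$ which can be evaluated by a Hensel-style recursion in $r$ that contributes a parity-controlled phase at each step. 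A sanity check at $r=2$ by direct $16$-term enumeration gives $G=4$ and $c\equiv 0\pmod 8$, matching the predicted split $c\equiv 4\pmod 8$ for $r$ odd and $c\equiv 0\pmod 8$ for $r$ even. Once the $F_{2^r}$ calculation is in hand the theorem follows by assembling the four blocks.
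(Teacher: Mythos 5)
Your overall route is the same as the paper's: normalize the Gauss sum $\sum_{a\in A}e^{2\pi i q(a)}$ family by family and read off $c$ from the phase. The odd-prime block is exactly the paper's computation (your Legendre-symbol bookkeeping with $\varepsilon(p^r)$ checks out in all eight subcases). Your half-sum observation for the cyclic $2$-families is correct --- the summand is indeed invariant under $a\mapsto a+2^r$ since $k(a+2^r)^2/2^{r+1}\equiv ka^2/2^{r+1} \pmod{1}$ for $r\geq 1$ --- and it is an equivalent substitute for the paper's use of the Landsberg--Schaar reciprocity formula with $(c,a)=(2^r,5)$ etc.; both reproduce the $(1,7)$ and $(1,5)/(7,3)$ patterns. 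Your orthogonality one-liner for $E_{2^r}$, namely $\sum_n e^{2\pi i mn/2^r}=2^r\delta_{m,0}$, is in fact cleaner than the paper's recursion $f(r)=f(r-1)$.

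The genuine gap is the family $F_{2^r}$ for $r\geq 3$, which you yourself flag as the main obstacle: what you write there is a plan, not a proof. The phrase ``a Hensel-style recursion in $r$ that contributes a parity-controlled phase at each step'' is asserted with no argument, and as phrased it suggests a one-step recursion, whereas the natural reduction has step two. Concretely, with $R=\mbbZ[\omega]$ and $2$ inert (note $N(m+n\omega)=m^2-mn+n^2$, so you must first substitute $n\mapsto -n$ or work with $\zeta_6$ to get $m^2+mn+n^2$), the sum over $R/2^rR$ splits into the unit locus and the non-unit locus $x\in 2R$. For the units one must show the norm $(R/2^r)^\times\to(\mbbZ/2^r)^\times$ is surjective with fibers of equal size $3\cdot 2^{r-1}$, whence their contribution is a multiple of $\sum_{t\ \mathrm{odd}}e^{2\pi i t/2^r}=0$ for $r\geq 2$; for the non-units, $N(2y)=4N(y)$ rescales the sum to give $G(r)=4\,G(r-2)$, i.e.\ $g(r)=g(r-2)$ --- precisely the two-step recursion the paper establishes by elementary odd/even splitting, anchored at the base cases $g(1)=-1$, $g(2)=1$ that you verified. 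So your idea is workable, but the surjectivity-with-uniform-fibers claim and the two-step rescaling must actually be proved (or replaced by the paper's elementary manipulation); as written, the values $c\equiv 4$ ($r$ odd) and $c\equiv 0$ ($r$ even) for the $F$ family rest only on the checks at $r=1,2$.
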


\begin{proof}
First a well-known result for Gauss sums  $\mathcal{G}(n)=\sum\limits_{j=1}^{n}e^{\frac{2\pi ij^2}{n}}$ is: \[\mathcal{G}(n)=\begin{cases} \sqrt{n},&n\equiv1 \;mod\;4\\ 0,&n\equiv2 \;mod\;4\\ i\sqrt{n},&n\equiv3\; mod\;4\\(1+i)\sqrt{n},&n\equiv0 \;mod\;4\\
\end{cases}\]
and for $\mathcal{G}(n,m)=\sum\limits_{j=1}^{n}e^{\frac{2\pi imj^2}{n}}$, where $n$ is odd and $(n,m)=1$, we have \[\mathcal{G}(n,m)=(\frac{m}{n})\mathcal{G}(n,1)=
(\frac{m}{n})\mathcal{G}(n),\] where $(\frac{m}{n})$ is the Jacobi symbol.

Similar to quadratic reciprocity, the following holds:
\[\sum\limits_{n=1}^{c}e^{\frac{\pi i an^2}{c}}=\sqrt{\frac{c}{a}
}e^{\frac{\pi i}{4}}\sum\limits_{n=1}^{a}e^{-\frac{\pi i cn^2}{a}},\]
where $a$ and $c$ are positive integers and $ac$ is even.

\begin{itemize}
  \item For $A_{p^r} \ :\ q(1) = \frac{m}{p^r}, \ \text{where } (m,p)=1\ \text{and} \ \Big (\frac{2m}{p}\Big )=1,\\
e^{\frac{\pi ic}{4}}=\frac{\sum\limits_{j=1}^{p^r}e^{2\pi iq(j)}}{p^{\frac{r}{2}}}=\frac{\sum\limits_{j=1}^{p^r}e^{\frac{2\pi imj^2}{p^r}}}{p^{\frac{r}{2}}}=(\frac{m}{p^r})
\frac{\sum\limits_{j=1}^{p^r}e^{\frac{2\pi ij^2}{p^r}}}{p^{\frac{r}{2}}}\\
=(\frac{m}{p})^r
\frac{\sum\limits_{j=1}^{p^r}e^{\frac{2\pi ij^2}{p^r}}}{p^{\frac{r}{2}}}
=(\frac{2}{p})^r
\frac{\sum\limits_{j=1}^{p^r}e^{\frac{2\pi ij^2}{p^r}}}{p^{\frac{r}{2}}}=\begin{cases}1&\text{r even };\text{or r odd }p\equiv1 \;mod\;8\\i&\text{r odd }p\equiv-1 \;mod\;8\\-i&\text{r odd }p\equiv3 \;mod\;8\\-1&\text{r odd }p\equiv-3 \;mod\;8.\\\end{cases}$

Thus
\[c\equiv\begin{cases}0&\text{r even };\text{or r odd }p\equiv1 \;mod\;8\\2&\text{r odd }p\equiv-1 \;mod\;8\\6&\text{r odd }p\equiv3 \;mod\;8\\4&\text{r odd }p\equiv-3\; mod\;8.\\\end{cases}
\]  

  \item For  $B_{p^r} \ :\ q(1) = \frac{n}{p^r}, \ \text{where } (n,p)=1\ \text{and} \ \Big (\frac{2n}{p}\Big )=-1, \\
    e^{\frac{\pi ic}{4}}=\frac{\sum\limits_{j=1}^{p^r}e^{2\pi iq(j)}}{p^{\frac{r}{2}}}=\frac{\sum\limits_{j=1}^{p^r}e^{\frac{2\pi inj^2}{p^r}}}{p^{\frac{r}{2}}}=(\frac{n}{p^r})
\frac{\sum\limits_{j=1}^{p^r}e^{\frac{2\pi ij^2}{p^r}}}{p^{\frac{r}{2}}}\\
=(\frac{n}{p})^r
\frac{\sum\limits_{j=1}^{p^r}e^{\frac{2\pi ij^2}{p^r}}}{p^{\frac{r}{2}}}
=(-1)^r(\frac{2}{p})^r
\frac{\sum\limits_{j=1}^{p^r}e^{\frac{2\pi ij^2}{p^r}}}{p^{\frac{r}{2}}}=\begin{cases}1&\text{r even};\text{or r odd }p\equiv-3\; mod\;8\\-1&\text{r odd }p\equiv1 \;mod\;8\\-i&\text{r odd }p\equiv-1 \;mod\;8\\i&\text{r odd }p\equiv3 \;mod\;8.\\\end{cases}$

Thus
\[c\equiv\begin{cases}0&\text{r even };\text{or r odd }p\equiv-3 \;mod\;8\\4&\text{r odd }p\equiv1 \;mod\;8\\6&\text{r odd }p\equiv-1 \;mod\;8\\2&\text{r odd }p\equiv3 \;mod\;8.\\\end{cases}
\]

\item For $A_{2^r}\ :\  q(1)=\frac{1}{2^{r+1}},e^{\frac{\pi ic}{4}}=\frac{\sum\limits_{j=1}^{2^r}e^{2\pi iq(j)}}{2^{\frac{r}{2}}}=\frac{\sum\limits_{j=1}^{2^r}e^{\frac{2\pi ij^2}{2^{r+1}}}}{2^{\frac{r}{2}}}
=\frac{\sum\limits_{j=1}^{2^r}e^{\frac{\pi ij^2}{2^r}}}{2^{\frac{r}{2}}}
=\frac{2^{\frac{r}{2}}e^{\frac{\pi i}{4}}}{2^{\frac{r}{2}}}=e^{\frac{\pi i}{4}}$.

For the second last equality, quadratic reciprocity is used 
for $c=2^r,a=1$.
It follows that $c\equiv1 \;mod\;8.$

  \item For $B_{2^r}\ :\  q(1)=-\frac{1}{2^{r+1}},
e^{\frac{\pi ic}{4}}=\frac{\sum\limits_{j=1}^{2^r}e^{2\pi iq(j)}}{2^{\frac{r}{2}}}=\frac{\sum\limits_{j=1}^{2^r}e^{-\frac{2\pi ij^2}{2^{r+1}}}}{2^{\frac{r}{2}}}
=\overline{\frac{\sum\limits_{j=1}^{2^r}e^{\frac{2\pi ij^2}{2^{r+1}}}}{2^{\frac{r}{2}}}}=e^{-\frac{\pi i}{4}}$.

Therefore, $c\equiv-1 mod8.$

\item For $C_{2^r}\ :\  q(1) = \frac{5}{2^{r+1}},e^{\frac{\pi ic}{4}}=\frac{\sum\limits_{j=1}^{2^r}e^{2\pi iq(j)}}{2^{\frac{r}{2}}}=\frac{\sum\limits_{j=1}^{2^r}e^{\frac{2\pi i5j^2}{2^{r+1}}}}{2^{\frac{r}{2}}}
=\frac{\sum\limits_{j=1}^{2^r}e^{\frac{\pi i5j^2}{2^r}}}{2^{\frac{r}{2}}}
=\sqrt{\frac{1}{5}}e^{\frac{\pi i}{4}}
\sum\limits_{j=1}^{5}e^{-\frac{\pi i 2^rj^2}{5}}\\
=\sqrt{\frac{1}{5}}e^{\frac{\pi i}{4}}
\overline{(\frac{2^{r-1}}{5})\sum\limits_{j=1}^{5}e^{\frac{2\pi i j^2}{5}}}
=\sqrt{\frac{1}{5}}e^{\frac{\pi i}{4}}
\overline{(-1)^{r-1}\sqrt{5}}=e^{\frac{\pi i}{4}}(-1)^{r-1}$.

For the fourth equality, quadratic reciprocity is used 
for $c=2^r,a=5$.
For the second last equality, $(\frac{2}{5})=-1$ and $\mathcal{G}(5)=\sqrt{5}$ are used.

Hence
\[c\equiv\begin{cases}1& \text{r odd}\\5& \text{r even }\end{cases}\;mod\;8.\]

\item For $D_{2^r}\ :\  q(1) = \frac{-5}{2^{r+1}},
e^{\frac{\pi ic}{4}}=\frac{\sum\limits_{j=1}^{2^r}e^{2\pi iq(j)}}{2^{\frac{r}{2}}}=\frac{\sum\limits_{j=1}^{2^r}e^{\frac{-2\pi i5j^2}{2^{r+1}}}}{2^{\frac{r}{2}}}
=\overline{\frac{\sum\limits_{j=1}^{2^r}e^{\frac{2\pi i5j^2}{2^{r+1}}}}{2^{\frac{r}{2}}}}
=\overline{e^{\frac{\pi i}{4}}(-1)^{r-1}}
=e^{\frac{-\pi i}{4}}(-1)^{r-1}$

Thus
\[c\equiv\begin{cases}-1& \text{r odd}\\-5& \text{r even }\end{cases}\;mod\;8.\]

\item For $E_{2^r}\ :\ q(e_{(m,n)})=\frac{mn}{2^r},
e^{\frac{\pi ic}{4}}=\frac{\sum\limits_{\alpha\in \mbbZ_{2^r} \times \mbbZ_{2^r} }e^{2\pi iq(e^{\alpha})}}{2^r}
=\frac{\sum\limits_{m=1}^{2^r}\sum\limits_{n=1}^{2^r}e^{\frac{2\pi imn}{2^r}}}{2^r}$.

Set $f(r)=\frac{\sum\limits_{m=1}^{2^r}\sum\limits_{n=1}^{2^r}e^{\frac{2\pi imn}{2^r}}}{2^r}$, an easy calculation shows $f(1)=f(2)=1.$ When $r\geq3,$
\begin{align*}
  f(r)& =\frac{\sum\limits_{m=1}^{2^r}\sum\limits_{n=1}^{2^r}e^{\frac{2\pi imn}{2^r}}}{2^r}\\
    &=\frac{\sum\limits_{m=1,m \,odd.}^{2^r}\sum\limits_{n=1}^{2^r}e^{\frac{2\pi imn}{2^r}}}{2^r}
+\frac{\sum\limits_{m=1,m \, even.}^{2^r}\sum\limits_{n=1}^{2^r}e^{\frac{2\pi imn}{2^r}}}{2^r}\\
  &=\frac{\sum\limits_{m=1,m \,odd.}^{2^r}\sum\limits_{l=1,l=mn.}^{2^r}e^{\frac{2\pi il}{2^r}}}{2^r}
+\frac{\sum\limits_{m=1,m \, even.}^{2^r}\sum\limits_{n=1}^{2^r}e^{\frac{2\pi imn}{2^r}}}{2^r}\\
&=\frac{\sum\limits_{m=1,m \, even.}^{2^r}\sum\limits_{n=1}^{2^r}e^{\frac{2\pi imn}{2^r}}}{2^r}\\
& =\frac{\sum\limits_{s=1,s=\frac{m}{2} .}^{2^{r-1}}\sum\limits_{n=1}^{2^r}e^{\frac{2\pi i2sn}{2^r}}}{2^r}\\
\end{align*}

For $\sum\limits_{n=1}^{2^r}e^{\frac{2\pi imn}{2^r}}=
\sum\limits_{l=1,l=mn.}^{2^r}e^{\frac{2\pi il}{2^r}}$, we use the fact that when $m$ runs a complete residue system modulo $2^r$, then
$l=mn$ runs a complete residue system modulo $2^r$ if $m$ is odd. Suppose $1\leq s,t\leq 2^r$, and $ms\equiv mt$ mod $2^r$, then $2^r | m(s-t)$. As $m$ is odd, thus $2^r| (s-t)$. But $1\leq s,t\leq 2^r $, then $s=t$, thus claim verified!
Next
\begin{align*}
f(r)&=\frac{\sum\limits_{n=1}^{2^r}\sum\limits_{s=1}^{2^{r-1}}e^{\frac{2\pi isn}{2^{r-1}}}}{2^r}\\
&=\frac{\sum\limits_{n=1}^{2^{r-1}}\sum\limits_{s=1}^{2^{r-1}}e^{\frac{2\pi isn}{2^{r-1}}}}{2^r}+
\frac{\sum\limits_{t=1,t=n-2^{r-1}.}^{2^{r-1}}\sum\limits_{s=1}^{2^{r-1}}e^{\frac{2\pi is(t+2^{r-1})}{2^{r-1}}}}{2^r}\\
&=\frac{f(r-1)}{2}+
\frac{\sum\limits_{t=1}^{2^{r-1}}\sum\limits_{s=1}^{2^{r-1}}e^{\frac{2\pi ist}{2^{r-1}}}}{2^r}\\
&=\frac{f(r-1)}{2}+\frac{f(r-1)}{2}\\
&=f(r-1)\\
\end{align*}
By induction, $f(r)=1$ for all $r\geq1.$
Then $e^{\frac{\pi ic}{4}}=f(r)=1.$
It follows that $c\equiv0 \;mod\;8.$

\item For $F_{2^r}\ : \ q(e_{(m,n)})=\frac{m^2+n^2+mn}{2^r}.
e^{\frac{\pi ic}{4}}=\frac{\sum\limits_{\alpha\in \mbbZ_{2^r} \times \mbbZ_{2^r} }e^{2\pi iq(e^{\alpha})}}{2^r}
=\frac{\sum\limits_{m=1}^{2^r}\sum\limits_{n=1}^{2^r}e^{\frac{2\pi i(m^2+n^2+mn)}{2^r}}}{2^r}$.

Set $g(r)=\frac{\sum\limits_{m=1}^{2^r}\sum\limits_{n=1}^{2^r}e^{\frac{2\pi i(m^2+n^2+mn)}{2^r}}}{2^r}$, an easy calculation shows $g(1)=-1,g(2)=1.$ When $r\geq3,$
\begin{align*}
  g(r) &=\frac{\sum\limits_{m=1}^{2^r}\sum\limits_{n=1}^{2^r}e^{\frac{2\pi i(m^2+n^2+mn)}{2^r}}}{2^r}\\
   & =\frac{\sum\limits_{m=1,m \,odd.}^{2^r}\sum\limits_{n=1}^{2^r}e^{\frac{2\pi i(m^2+n^2+mn)}{2^r}}}{2^r}
+\frac{\sum\limits_{m=1,m \, even.}^{2^r}\sum\limits_{n=1}^{2^r}e^{\frac{2\pi i(m^2+n^2+mn)}{2^r}}}{2^r}\\
 & =\frac{\sum\limits_{m=1,m \,odd.}^{2^r}e^{\frac{2\pi im^2}{2^r}}\sum\limits_{n=1.}^{2^r}e^{\frac{2\pi i(n^2+mn)}{2^r}}}{2^r}
+\frac{\sum\limits_{s=1,s=\frac{m}{2}.}^{2^{r-1}}\sum\limits_{n=1}^{2^r}e^{\frac{2\pi i(4s^2+n^2+2sn)}{2^r}}}{2^r}\\
& =\frac{\sum\limits_{m=1,m \,odd.}^{2^r}e^{\frac{2\pi im^2}{2^r}}\sum\limits_{t=1,t\,even.}^{2^r}2e^{\frac{2\pi it}{2^r}}}{2^r}
+\frac{\sum\limits_{n=1.}^{2^r}\sum\limits_{s=1}^{2^{r-1}}e^{\frac{2\pi i(4s^2+n^2+2sn)}{2^r}}}{2^r}\\
&=\frac{\sum\limits_{n=1}^{2^r}\sum\limits_{s=1}^{2^{r-1}}e^{\frac{2\pi i(4s^2+n^2+2sn)}{2^r}}}{2^r}\\
\end{align*}

For $\sum\limits_{n=1.}^{2^r}e^{\frac{2\pi i(n^2+mn)}{2^r}}
=\sum\limits_{t=1,t\,even.}^{2^r}2e^{\frac{2\pi it}{2^r}}$, we use the fact that when n runs a complete residue system modulo $2^r$, then
$t=n^2+mn$ runs a complete even residue system modulo $2^r$ twice if m is odd. First, when m is odd, $t=n^2+mn\equiv n^2+n=n(n+1)\equiv0$mod2 which shows that t must be even residue. Suppose $1\leq a,b\leq 2^r$, and $a^2+ma\equiv b^2+mb$ mod $2^r$, then $2^r| (a-b)(a+b+m)$. As $a-b$ and $a+b+m$ have different parities, then $2^r| (a-b)$ or  $2^r| (a+b+m)$ and only one can be true which means that for every even residue d mod$2^r$, we have two solutions of a which satisfies $a^2+ma\equiv d$ mod$2^r$, thus claim verified! Next 
\begin{align*}
g(r)&=\frac{\sum\limits_{n=1}^{2^{r-1}}\sum\limits_{s=1}^{2^{r-1}}e^{\frac{2\pi i(4s^2+n^2+2sn)}{2^r}}}{2^r}
+\frac{\sum\limits_{m=1,m=n-2^{r-1}.}^{2^{r-1}}\sum\limits_{s=1}^{2^{r-1}}e^{\frac{2\pi i(4s^2+(m+2^{r-1})^2+2s(m+2^{r-1}))}{2^r}}}{2^r}\\
&=\frac{\sum\limits_{n=1}^{2^{r-1}}\sum\limits_{s=1}^{2^{r-1}}e^{\frac{2\pi i(4s^2+n^2+2sn)}{2^r}}}{2^r}
+\frac{\sum\limits_{m=1.}^{2^{r-1}}\sum\limits_{s=1}^{2^{r-1}}e^{\frac{2\pi i(4s^2+m^2+2sm)}{2^r}}}{2^r}\\
&=\frac{\sum\limits_{n=1}^{2^{r-1}}\sum\limits_{s=1}^{2^{r-1}}e^{\frac{2\pi i(4s^2+n^2+2sn)}{2^r}}}{2^{r-1}}\\
&=\frac{\sum\limits_{n=1,n\,odd.}^{2^{r-1}}\sum\limits_{s=1}^{2^{r-1}}e^{\frac{2\pi i(4s^2+n^2+2sn)}{2^r}}}{2^{r-1}}
+\frac{\sum\limits_{n=1,n\,even.}^{2^{r-1}}\sum\limits_{s=1}^{2^{r-1}}e^{\frac{2\pi i(4s^2+n^2+2sn)}{2^r}}}{2^{r-1}}\\
\end{align*}
\begin{align*}
g(r)
&=\frac{\sum\limits_{n=1,n\,odd.}^{2^{r-1}}e^{\frac{2\pi in^2}{2^r}}\sum\limits_{s=1}^{2^{r-1}}e^{\frac{2\pi i(2s^2+sn)}{2^{r-1}}}}{2^{r-1}}
+\frac{\sum\limits_{t=1,t=\frac{n}{2}.}^{2^{r-2}}\sum\limits_{s=1}^{2^{r-1}}e^{\frac{2\pi i(4s^2+4t^2+2s2t)}{2^r}}}{2^{r-1}}\\
&=\frac{\sum\limits_{n=1,n\,odd.}^{2^{r-1}}e^{\frac{2\pi in^2}{2^r}}\sum\limits_{l=1,l=2s^2+sn.}^{2^{r-1}}e^{\frac{2\pi il}{2^{r-1}}}}{2^{r-1}}
+\frac{\sum\limits_{s=1.}^{2^{r-1}}\sum\limits_{t=1}^{2^{r-2}}e^{\frac{2\pi i(s^2+t^2+st)}{2^{r-2}}}}{2^{r-1}}\\
&=\frac{\sum\limits_{s=1.}^{2^{r-1}}\sum\limits_{t=1}^{2^{r-2}}e^{\frac{2\pi i(s^2+t^2+st)}{2^{r-2}}}}{2^{r-1}}\\
&=\frac{\sum\limits_{s=1.}^{2^{r-2}}\sum\limits_{t=1}^{2^{r-2}}e^{\frac{2\pi i(s^2+t^2+st)}{2^{r-2}}}}{2^{r-1}}
+\frac{\sum\limits_{m=1,m=s-2^{r-2}.}^{2^{r-2}}\sum\limits_{t=1}^{2^{r-2}}e^{\frac{2\pi i((m+2^{r-2})^2+t^2+(m+2^{r-2})t)}{2^{r-2}}}}{2^{r-1}}\\
\end{align*}

For $\sum\limits_{s=1}^{2^{r-1}}e^{\frac{2\pi i(2s^2+sn)}{2^{r-1}}}
=\sum\limits_{l=1,l=2s^2+sn.}^{2^{r-1}}e^{\frac{2\pi il}{2^{r-1}}}$, we use the fact that when s runs a complete residue system modulo $2^{r-1}$, then 
$l=2s^2+sn$ runs a complete residue system modulo $2^{r-1}$ if n is odd. Suppose $1\leq s,t\leq 2^{r-1}$, and $2s^2+sn\equiv2t^2+tn$ mod $2^{r-1}$, then $2^{r-1}| (s-t)(2(s+t)+n)$. As n is odd, then $2(s+t)+n$ is odd, hence  $2^{r-1}| (s-t)$. But $1\leq s,t\leq 2^{r-1}$, then s=t, thus claim verified!
\begin{align*}
 g(r)&=\frac{\sum\limits_{s=1.}^{2^{r-2}}\sum\limits_{t=1}^{2^{r-2}}e^{\frac{2\pi i(s^2+t^2+st)}{2^{r-2}}}}{2^{r-1}}
+\frac{\sum\limits_{m=1.}^{2^{r-2}}\sum\limits_{t=1}^{2^{r-2}}e^{\frac{2\pi i(m^2+t^2+mt)}{2^{r-2}}}}{2^{r-1}}\\
&=\frac{\sum\limits_{s=1.}^{2^{r-2}}\sum\limits_{t=1}^{2^{r-2}}e^{\frac{2\pi i(s^2+t^2+st)}{2^{r-2}}}}{2^{r-2}}\\
     &=g(r-2)\\
   \end{align*}

By induction, 
\[g(r)=\begin{cases}-1\quad\text{r odd}\\1\quad \text{r even}\end{cases}\].

Then \[e^{\frac{\pi ic}{4}}=g(r)=\begin{cases}-1\quad\text{r odd}\\1\quad \text{r even}\end{cases}\]

Therefore,
\[c\equiv\begin{cases}4\quad\text{r odd}\\0\quad \text{r even}\end{cases}\;mod\;8.\]

\end{itemize}
\end{proof}

\begin{cor}

\begin{enumerate}
    \item The central charge of an abelian anyon model in the $A,B,C,D$ classes is always an integer with the opposite parity of the order $|A|$ of $A$.
    \item The prime abelian anyon models come in conjugate pairs indexed by the central charges in Table \ref{cctable} that are complementary modulo $8$.
    \item Any $8$ copies of an abelian anyon model is a Drinfeld center.
\end{enumerate}

\end{cor}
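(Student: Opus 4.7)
Parts (1) and (2) are immediate inspections of Table~\ref{cctable}. For (1), when $|A|=p^r$ is an odd prime power (families $A_{p^r}$, $B_{p^r}$) the tabulated values of $c$ lie in $\{0,2,4,6\}$, all even and hence of parity opposite to $p^r$; when $|A|=2^r$ (families $A_{2^r}$, $B_{2^r}$, $C_{2^r}$, $D_{2^r}$) the tabulated values lie in $\{1,3,5,7\}$, all odd and opposite to $2^r$. For (2), since conjugation $\mcC\mapsto\bar{\mcC}$ sends $(A,q)\mapsto(A,-q)$ and $c\mapsto -c\pmod 8$, one reads the pairings off the classification of Thm.~\ref{classification_abelian}: $A_{p^r}\leftrightarrow B_{p^r}$ when $p\equiv 3\pmod 4$, while each is self-conjugate when $p\equiv 1\pmod 4$ (consistently with $c\in\{0,4\}$ being self-complementary modulo $8$); $A_{2^r}\leftrightarrow B_{2^r}$; $C_{2^r}\leftrightarrow D_{2^r}$; and $E_{2^r}$, $F_{2^r}$ are each self-conjugate. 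In every case the two central charges sum to $0\pmod 8$.

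For (3), which is the substantive claim, the plan is to invoke the Witt group $\mcW$ of non-degenerate braided fusion categories. A modular category $\mcC$ is a Drinfeld center of some fusion category if and only if $[\mcC]=0$ in $\mcW$; for pointed $\mcC$ this amounts to asking the underlying metric group $(A,q)$ to be hyperbolic, i.e., to admit a Lagrangian subgroup $L\subset A$ with $|L|^2=|A|$ and $q|_L=0$. The pointed part $\mcW_{\mathrm{pt}}\subseteq\mcW$ decomposes as a direct sum $\bigoplus_p \mcW_{\mathrm{pt}}^{(p)}$ over primes, reflecting the prime decomposition of metric groups. By the computation recorded in \cite{DMNO10, DNO13}, $\mcW_{\mathrm{pt}}^{(2)}\cong\mbbZ_8\oplus\mbbZ_2$, while for odd $p$ one has $\mcW_{\mathrm{pt}}^{(p)}\cong\mbbZ_4$ if $p\equiv 3\pmod 4$ and $\mcW_{\mathrm{pt}}^{(p)}\cong\mbbZ_2\oplus\mbbZ_2$ if $p\equiv 1\pmod 4$. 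Each summand has exponent dividing $8$, so $8[\mcC]=0$ in $\mcW$ for every abelian $\mcC$, whence $\mcC^{\boxtimes 8}$ is Witt trivial, i.e., a Drinfeld center.

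The main obstacle is the prime-$2$ computation: the $\mbbZ_8$ factor in $\mcW_{\mathrm{pt}}^{(2)}$ is exactly what forces the multiplicity $8$ rather than some smaller number, and pinning it down rigorously depends on the full relations catalogue for metric $2$-groups from \cite{Miranda84, KK80}. A more self-contained route would bypass the Witt group altogether by constructing, for each generator $(A,q)$ listed in Thm.~\ref{classification_abelian}, an explicit Lagrangian subgroup of $(A,q)^{\oplus 8}$; for odd primes already $4$ copies suffice and the splitting is transparent, but the families $A_{2^r}, B_{2^r}, C_{2^r}, D_{2^r}$---whose cyclic group admits no Lagrangian of its own---require one to carefully combine the eight copies to produce the required isotropic subgroup, which is essentially a concrete repackaging of the $p=2$ relations of \cite{Miranda84, KK80}.
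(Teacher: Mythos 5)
Your proposal is correct, and for part (3) it takes a route that is genuinely different from the paper's nominal proof, though one the authors themselves acknowledge. The paper's entire proof of this corollary is the sentence ``The proofs are all straightforward inspections'': parts (1) and (2) are read off Table \ref{cctable} exactly as you do (one caution for (2): for $p=2$ and $r$ odd the central charges alone do not separate $A_{2^r}$ from $C_{2^r}$, both with $c\equiv 1$, nor $B_{2^r}$ from $D_{2^r}$, both with $c\equiv 7$, so the pairings $A\leftrightarrow B$ and $C\leftrightarrow D$ must be read from the forms $q(1)=\pm\frac{1}{2^{r+1}}$, $\pm\frac{5}{2^{r+1}}$, which is what you in fact do). For part (3), the ``inspection'' the paper has in mind leans on the relations it has already quoted: $2A_{p^r}=2B_{p^r}$ for odd $p$ \cite{Wall63}, and, for $p=2$, the fact from \cite{Miranda84,KK80} that four copies of any theory with a fixed underlying group coincide. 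Combined with the conjugate pairing of part (2)---the conjugate $\bar{\mcC}$ has the same underlying group as $\mcC$---this gives directly
\[
\mcC^{\boxtimes 8}\cong \mcC^{\boxtimes 4}\boxtimes \mcC^{\boxtimes 4}\cong \mcC^{\boxtimes 4}\boxtimes \bar{\mcC}^{\boxtimes 4}\cong\bigl(\mcC\boxtimes\bar{\mcC}\bigr)^{\boxtimes 4},
\]
a product of Drinfeld centers and hence a Drinfeld center, with four copies sufficing at odd primes (two when $p\equiv 1 \bmod 4$) and the general abelian case following from the prime decomposition and multiplicativity of centers. Your argument instead invokes the Witt group: Witt triviality is equivalent to being a Drinfeld center \cite{DMNO10}, and the computation $\mcW_{\mathrm{pt}}^{(2)}\cong\mbbZ_8\oplus\mbbZ_2$, $\mcW_{\mathrm{pt}}^{(p)}\cong\mbbZ_4$ or $\mbbZ_2\oplus\mbbZ_2$ \cite{DNO13} shows every pointed Witt class is killed by $8$. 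The paper's introduction explicitly notes that (3) ``also follows from the structure of the pointed part of the Witt group,'' so your route is the acknowledged alternative; what each buys is clear: yours is conceptually minimal (one exponent computation) but outsources the hard $\mbbZ_8$ content at $p=2$ to \cite{DNO13}---precisely the obstacle you flag---while the relations route is elementary and self-contained given the catalogue the paper quotes, and it sidesteps the explicit Lagrangian construction for the cyclic $2$-families entirely.
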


The proofs are all straightforward inspections.

\subsection{Symmetries of all prime abelian anyon models}

The topological symmetry group of an anyon model is the decategorification of the categorical group of braided tensor auto-equivalences.  For abelian anyon models, they are the same as the isometry group of the metric group $(A,q)$: group isomorphisms $\gamma$ of $A$ to itself that preserve the quadratic form in the sense that $q(\gamma(x))\equiv q(x)$ mod 1.
In this subsection, we calculate the topological symmetry group $Aut((A,q))$ of the eight families of prime abelain anyon models.

First we need two lemmas.

\begin{lem}\label{p quadratic}
Let $p$ be an odd prime and $a\in \mathbb{Z}$ with $(a, p)=1.$ Then the equation
\[x^2\equiv a(mod \,p^k)\]
 \begin{itemize}
\item
either has no solution if $(\frac{a}{p})=-1$; \\
\item
or has two solutions $x_1$ and $-x_1$ if $(\frac{a}{p})=1$.
\end{itemize}
\end{lem}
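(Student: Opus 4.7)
The plan is to prove the lemma by induction on $k$, using the standard Hensel lifting argument from mod $p$ to mod $p^k$, which is especially clean here because the derivative of $f(x)=x^2-a$ is $2x$ and both $2$ and any would-be root $x$ are units mod the odd prime $p$.

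For the base case $k=1$, the statement is just the definition of the Legendre symbol: the congruence $x^2\equiv a\pmod p$ has solutions iff $(a/p)=1$, and in that case it has exactly two solutions in $\mathbb{Z}/p\mathbb{Z}$, namely $\pm x_1$, because the squaring map on the cyclic group $(\mathbb{Z}/p\mathbb{Z})^*$ has kernel $\{\pm 1\}$.

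For the inductive step, assume the result for some $k\geq 1$, and consider the lift problem to $p^{k+1}$. Any solution $y$ of $y^2\equiv a\pmod{p^{k+1}}$ reduces to a solution of $y^2\equiv a\pmod{p^k}$, so if $(a/p)=-1$ there are no solutions mod $p^{k+1}$ either. If $(a/p)=1$, let $x_k$ be a solution mod $p^k$, and write a candidate lift as $y=x_k+tp^k$ with $t\in\{0,1,\dots,p-1\}$. Expanding,
\begin{equation*}
y^2 = x_k^2 + 2x_k t\, p^k + t^2 p^{2k} \equiv x_k^2 + 2x_k t\, p^k \pmod{p^{k+1}}.
\end{equation*}
Writing $x_k^2 = a + s p^k$, the lifting condition $y^2\equiv a \pmod{p^{k+1}}$ becomes $s + 2x_k t \equiv 0\pmod p$. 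Since $p$ is odd and $\gcd(x_k,p)=1$, the coefficient $2x_k$ is a unit mod $p$, so this linear congruence has a unique solution $t\in\mathbb{Z}/p\mathbb{Z}$. Therefore each of the two solutions $\pm x_k$ mod $p^k$ lifts uniquely to one solution mod $p^{k+1}$, giving exactly two solutions $\pm x_{k+1}$ mod $p^{k+1}$ (they are negatives of each other because negating the lift of $x_k$ gives the lift of $-x_k$). This closes the induction.

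There is no genuine obstacle here, just one point to be careful about: the argument that the two lifts form a $\pm$ pair uses that $p$ is odd, which ensures $x_k\not\equiv -x_k\pmod{p^k}$, so the two residue classes upstairs remain distinct. One could alternatively give a one-shot proof using the fact that $(\mathbb{Z}/p^k\mathbb{Z})^*$ is cyclic of order $p^{k-1}(p-1)$, whose squaring map has kernel $\{\pm 1\}$ and image the unique index-$2$ subgroup, which contains $a$ iff $a$ is a quadratic residue mod $p$; but the Hensel lifting argument is more elementary and aligns with the computational flavor of the surrounding sections.
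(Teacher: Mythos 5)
Your proof is correct. Note, though, that the paper itself offers no proof of this lemma: it is stated (together with Lemma~\ref{2 quadratic}) as a standard number-theoretic fact and then invoked in the proof of Theorem~\ref{Aut}, so there is no argument in the paper to compare against. Your Hensel-lifting induction is a complete and appropriate way to fill this in: the base case is the definition of the Legendre symbol, the lifting step correctly uses that $2x_k$ is a unit modulo the odd prime $p$ (the implicit claim $\gcd(x_k,p)=1$ does follow immediately from $x_k^2\equiv a \pmod{p^k}$ and $(a,p)=1$), and you rightly flag the only delicate points: that the solution count stays at exactly two because each of $\pm x_k$ lifts uniquely and the lifts remain a distinct $\pm$ pair since $p$ is odd. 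Your alternative one-shot argument via the cyclicity of $(\mathbb{Z}/p^k\mathbb{Z})^*$, whose squaring map has kernel $\{\pm 1\}$ and image of index $2$, is equally valid and is the quicker route; it also makes transparent why solvability modulo $p^k$ is governed by the Legendre symbol modulo $p$ alone. Either version would serve as a proof the paper omitted.
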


\begin{lem}\label{2 quadratic}
Let $a$ be an odd integer. Then the following holds:
\begin{enumerate}
\item The equation
\[x^2\equiv a(mod \,2)\]
has the unique solution $x\equiv 1$ (mod \,2)
\item The equation 
either
 \begin{itemize}
\item
has no solution if $a \equiv 3(mod \,4)$; or\\
\item
has two solutions $x \equiv 1, 3(mod \,4)$ if $a \equiv 1(mod \,4)$.
\end{itemize}
\item When $k\geq 3$, the equation
\[x^2\equiv a(mod \,2^k)\]
either
 \begin{itemize}
\item
has no solution if $a \not \equiv 1(mod \,8)$; or\\
\item
has four solutions $x_1, -x_1,x_1+2^{k-1}, -(x_1+2^{k-1})$ if $a \equiv 1(mod \,8)$.
\end{itemize}

\end{enumerate}

\end{lem}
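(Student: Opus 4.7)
The three parts are classical Hensel-type statements; the plan is to handle parts (1) and (2) by direct inspection and part (3) via the structure of the $2$-adic unit group. For (1), since $a$ is odd, $x^{2}\equiv a\pmod{2}$ forces $x\equiv 1\pmod{2}$, and conversely this is sufficient. For (2), the only squares modulo $4$ are $0$ and $1$, and every odd square is $\equiv 1\pmod{4}$; hence when $a\equiv 3\pmod{4}$ there is no solution, and when $a\equiv 1\pmod{4}$ both odd residues $x\equiv 1,3\pmod{4}$ solve the congruence.

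For part (3), I would first deduce the necessary condition $a\equiv 1\pmod{8}$ from the identity $(2t+1)^{2}=4t(t+1)+1$ together with the fact that $t(t+1)$ is always even, so every odd square is $\equiv 1\pmod{8}$; if $k\geq 3$ and a solution exists, then $a$ must have this residue as well. For sufficiency together with the count of four solutions, the cleanest route is the standard decomposition $(\mbbZ/2^{k}\mbbZ)^{*}\cong\langle -1\rangle\times\langle 5\rangle\cong\mbbZ/2\mbbZ\times\mbbZ/2^{k-2}\mbbZ$ valid for $k\geq 3$. Under this isomorphism the squaring endomorphism has kernel equal to the $2$-torsion subgroup $\{\pm 1,\pm(1+2^{k-1})\}$, of order $4$; hence its image has index $4$ and coincides with the subgroup of units congruent to $1\pmod{8}$, as one checks against the coset representatives $1,3,5,7\pmod{8}$. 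It follows that a solution exists precisely when $a\equiv 1\pmod{8}$, and in that case there are exactly four preimages.

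To put these preimages in the asserted form, observe that the solution set is the coset $x_{1}\cdot\ker(\text{squaring})=\{\pm x_{1},\pm x_{1}(1+2^{k-1})\}$. Since $x_{1}$ is odd, $(x_{1}-1)\cdot 2^{k-1}\equiv 0\pmod{2^{k}}$, which gives $x_{1}(1+2^{k-1})\equiv x_{1}+2^{k-1}\pmod{2^{k}}$; this yields the announced list $x_{1},-x_{1},x_{1}+2^{k-1},-(x_{1}+2^{k-1})$.

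The main obstacle, if one wants a fully self-contained argument, is the structure theorem for $(\mbbZ/2^{k}\mbbZ)^{*}$; a direct alternative is induction on $k\geq 3$, with base case $k=3$ where $x^{2}\equiv 1\pmod{8}$ for every odd $x$ gives the four solutions $1,3,5,7$. In the inductive step, writing a prospective solution mod $2^{k+1}$ as $x_{0}+2^{k}s$ with $s\in\{0,1\}$ and expanding gives $(x_{0}+2^{k}s)^{2}\equiv x_{0}^{2}\pmod{2^{k+1}}$, so each solution mod $2^{k}$ either lifts for both values of $s$ or for neither; a parity count, using that among the four solutions mod $2^{k}$ squaring sends two to $a$ and two to $a+2^{k}$ modulo $2^{k+1}$, then produces exactly four solutions mod $2^{k+1}$ as required.
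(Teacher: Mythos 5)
Your proposal is correct, but note that the paper itself offers no proof of this lemma at all: it is stated as a classical fact (alongside Lemma \ref{p quadratic}) and immediately put to use in the proof of Theorem \ref{Aut}, so there is no argument in the paper to compare against. Your write-up therefore supplies a justification the authors omitted, and both of your routes are sound. The main argument via $(\mbbZ/2^{k}\mbbZ)^{*}\cong\langle -1\rangle\times\langle 5\rangle$ is clean: the kernel of squaring is the $2$-torsion $\{\pm 1,\pm(1+2^{k-1})\}$ (note $(1+2^{k-1})^{2}=1+2^{k}+2^{2k-2}\equiv 1\pmod{2^{k}}$ since $2k-2\geq k$, and the four elements are distinct for $k\geq 3$), so the image of squaring has index $4$, sits inside the index-$4$ subgroup of units $\equiv 1\pmod 8$, and hence equals it; the coset computation $x_{1}(1+2^{k-1})\equiv x_{1}+2^{k-1}\pmod{2^{k}}$ (using $x_{1}$ odd) correctly converts the kernel coset into the asserted list $x_{1},-x_{1},x_{1}+2^{k-1},-(x_{1}+2^{k-1})$. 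Your fallback induction also checks out: the key identity $(x_{1}+2^{k-1})^{2}\equiv x_{1}^{2}+2^{k}\pmod{2^{k+1}}$ (valid for $k\geq 3$ since $2k-2\geq k+1$) shows that of the two $\pm$-pairs of solutions mod $2^{k}$, exactly one pair squares to $a$ mod $2^{k+1}$, and each member of that pair lifts for both choices of $s$, yielding exactly four solutions at the next level. The only cosmetic point worth flagging is that part (2) of the lemma as printed never names its modulus; your reading of it as $x^{2}\equiv a\pmod 4$ is the intended one.
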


\begin{thm}\label{Aut}
The topological symmetry group of the eight families of prime abelian anyon models are given below where for the $F$ family, only the order of the symmetry group is found when $r\geq 4$.

\begin{enumerate}
    \item For the odd prime $A, B$ families: $$Aut(A_{p^r})=Aut(B_{p^r})\cong \mathbb{Z}_2$$
for $p$ odd prime and $r\geq 1$.

\item For the prime=$2$ families of $A,B,C,D$,
$$Aut(A_{2})=Aut(B_{2})\cong 1,$$
$$Aut(A_{2^r})=Aut(B_{2^r})=Aut(C_{2^r})=Aut(D_{2^r})\cong \mathbb{Z}_2$$ for  $r\geq 2$.

\item For the family $E$,

\[  Aut(E_{2^r})= \begin{cases}\mathbb{Z}_{2}, \quad r=1;\\
 \mathbb{Z}_{2}\times \mathbb{Z}_{2}, \quad r=2;\\
 (\mathbb{Z}_{2}\times\mathbb{Z}_{2^{r-2}})\rtimes \mathbb{Z}_{2}\quad r\geq3.\\
\end{cases} 
   \]

\item For the $F$ family,
$ Aut(F_{2})\cong D_3, Aut(F_{2^2})\cong D_6, Aut(F_{2^3})\cong D_6\rtimes \mathbb{Z}_2$.

$\left|Aut(F_{2^r})\right|=3\times2^r$ for $r\geq 4$.
\end{enumerate}

\begin{proof}
  $\forall  f \in Aut(A_{p^r})$, where $p$ is odd prime and $r\geq 1$.
  As $\mathbb{Z}_{p^r}$ is cyclic , so $f$ is determined by its image $f(1)$. Suppose $f(1)=j,$ then $q(f(1))\equiv q(1) (mod \,1)$ is equivalent to $\frac{mj^2}{p^r}\equiv \frac{m}{p^r} (mod \,1)$. As $(m, p)=1$, so it reduced to $j^2\equiv 1(mod \, p^r)$. According to Lem\ref{p quadratic}, the equation has two solutions for $j$ as $(\frac{1}{p})=1$ always holds. The two solutions for $j$ is $\pm 1$, so 
  $Aut(A_{p^r})=\mathbb{Z}_2$  . Parallel to the proof above, we easily get 
  $Aut(B_{p^r})=\mathbb{Z}_2$ .

 $\forall  f \in Aut(A_{2^r})$, as $\mathbb{Z}_{2^r}$ is cyclic , so $f$ is determined by its image $f(1)$. Suppose $f(1)=j,$ then $q(f(1))\equiv q(1) (mod \,1)$ is equivalent to $\frac{j^2}{2^{r+1}}\equiv \frac{1}{2^{r+1}} (mod \,1)$. It reduced to $j^2\equiv 1(mod \, 2^{r+1})$. According to Lem\ref{2 quadratic}, when $r=1, j$ has one solution $1$ which means  $Aut(A_{2})=1$; when $r\geq 2 ,$ j has four solutions $1, -1, 1+2^r, -(1+2^r)$. But $1$ and $1+2^r$ are the same in $Z_{2^r}$ which means $Aut(A_{2^r})=\mathbb{Z}_2$ when $r\geq 2.$ Parallel to the proof of  $Aut(A_{2^r})$, we can easily get $Aut(B_{2})=Aut(A_{2})=1$ and $Aut(B_{2^r})=Aut(C_{2^r})=Aut(D_{2^r})=Aut(A_{2^r})=\mathbb{Z}_2$ for  $r\geq 2$ .
 
    $\forall  f \in Aut(E_{2^r})$, as $\mathbb{Z}_{2^r}\times \mathbb{Z}_{2^r}$ has  two generators , suppose $f(e_1)=a e_1+b e_2, f(e_2)=c e_1+d e_2$ where $e_1=(1,0)$ and $e_2=(0,1)$ and $0\leq a, b, c, d \leq 2^r-1$ , then some necessary conditions are $q(f(e_1))\equiv q(e_1) (mod \,1)$  and $q(f(e_2))\equiv q(e_2) (mod \,1)$ and the determinant of transformation is invertible. All these conditions are equivalent to $\frac{ab}{2^{r}}\equiv 0 (mod \,1)$ and $\frac{cd}{2^{r}}\equiv 0 (mod \,1)$ and $ad-bc$ is odd. It reduced to $ab\equiv 0(mod \, 2^{r})$ and $cd\equiv 0(mod \, 2^{r})$ and $ad-bc$ is odd. If $a$ and $b$ are all even, then $ad-bc$ is even which is a contradiction, if $a$ and $b$ are all odd, then $ab\equiv 0(mod \, 2^{r})$ can not happen, so $a$ and $b$ have different parities. As $ab\equiv 0(mod \, 2^{r})$, so one of them must be 0. Suppose $a=0$, then parallel to the analysis of  $a$ and $b$ and $ad-bc$ is odd, we have $d=0$ and $b$ and $c$ are all odd. As $q(f(e_{(m,n)}))=q(e_{(m,n)})$ where 
  $e_{(m,n)}=m e_1+n e_2$, so $q(c n e_1+b m e_2)=q(m e_1+n e_2)$ which reduced to $\frac{b c m n}{2^r}\equiv \frac{m n}{2^r} ( mod\, 1)$ for any $0\leq m, n \leq 2^r-1$. So $b c\equiv 1(mod \,2^r)$ which means $b$ and $c$ are inverse to each other in the sense of mod $2^r$.  When $a=d=0,$ the transformation matrix $\begin{pmatrix}0&b\\
  b^{-1}(mod \, 2^r)&0  \end{pmatrix}$ corresponds to an element of order 2 in $Aut(E_{2^r})$. We define $B=\{B_b: B_b=\begin{pmatrix}0&b\\
  b^{-1}(mod \, 2^r)&0  \end{pmatrix}\text{ where $b$ runs a complete residue of } {\mathbb{Z}_{2^r}}^*\}$ . When $b=c=0,$ the transformation matrix $\begin{pmatrix}a&0\\
  0 & a^{-1}(mod \, 2^r)\end{pmatrix}$ composite the  group
   \[ 
 \begin{cases}1, \quad r=1;\\
 \mathbb{Z}_{2}, \quad r=2;\\
 \mathbb{Z}_{2}\times\mathbb{Z}_{2^{r-2}}\quad r\geq3.\\
\end{cases} 
 \]We define $A=\{A_a: A_a=\begin{pmatrix}a&0
  \\
 0 &a^{-1}(mod \, 2^r)  \end{pmatrix}\text{ where $a$ runs a complete residue of } {\mathbb{Z}_{2^r}}^*\}$ . Thus $Aut(E_{2^r})=\{A_a, B_b: \text{ where $a$ and $b$ run a complete residue of } {\mathbb{Z}_{2^r}}^*\}$. 
 
 As we have $A_{a_1} A_{a_2}=A_{a_1 a_2}, B_{b_1} B_{b_2}=A_{b_1 b_2^{-1}}, A_a B_b=B_{a b}$, so 
 $Aut(E_{2^r})=<A_a, B_1:  \text{ where $a$  runs a complete residue of } {\mathbb{Z}_{2^r}}^*>$. As $A$ is a group of order $2^{r-1}$ and has index 2 in $Aut(E_{2^r})$
  , so $A\unlhd Aut(E_{2^r})$. Besides, every element of $Aut(E_{2^r})$, if it is some $A_a$, it equals $A_a\times B_1^{2}$; if it is some $B_b$, it equals $A_b\times B_1$
 which means $Aut(E_{2^r})=A<B_1>$, as $A\cap <B_1>=1, $so 
 \[  Aut(E_{2^r})=A \rtimes\mathbb{Z}_{2}= \begin{cases}\mathbb{Z}_{2}, \quad r=1;\\
 \mathbb{Z}_{2}\times \mathbb{Z}_{2}, \quad r=2;\\
 (\mathbb{Z}_{2}\times\mathbb{Z}_{2^{r-2}})\rtimes\mathbb{Z}_{2}\quad r\geq3.\\
\end{cases} 
   \]

  $\forall  f \in Aut(F_{2^r})$, as $\mathbb{Z}_{2^r}\times \mathbb{Z}_{2^r}$ has  two generators , suppose $f(e_1)=a e_1+b e_2, f(e_2)=c e_1+d e_2$ where $e_1=(1,0)$ and $e_2=(0,1)$ and $0\leq a, b, c, d \leq 2^r-1$ .  As $q(f(e_{(m,n)}))=q(e_{(m,n)})$ where 
  $e_{(m,n)}=m e_1+n e_2$, so $q((a m+c n) e_1+(b m+d n) e_2)=q(m e_1+n e_2)$ which reduced to $\frac{m^2(a^2+b^2+a b)+n^2(c^2+d^2+cd)+(2 a c +2 b d +a d +b c )m n}{2^r}\equiv \frac{m^2+n^2+m n}{2^r} ( mod\, 1)$ for any $0\leq m, n \leq 2^r-1$. So 
  \[
  \begin{cases}
  a^2+b^2+a b\equiv 1(mod \,2^r)\\
  c^2+d^2+c d \equiv 1(mod \,2^r)\\
  2 a c +2 b d +a d +b c \equiv 1(mod \,2^r)\\
  a d-b c \text{ is odd}
  \end{cases}
  \]
  
  When $r=1$,  \[
  \begin{cases}
  a^2+b^2+a b\equiv 1(mod \,2)\\
  c^2+d^2+c d \equiv 1(mod \,2)\\
  2 a c +2 b d +a d +b c \equiv 1(mod \,2)\\
  a d-b c \text{ is odd}
  \end{cases}
  \]
it reduced to
\[
  \begin{cases}
  a^2+b^2+a b\equiv 1(mod \,2)\\
  c^2+d^2+c d \equiv 1(mod \,2)\\
  a d +b c \equiv 1(mod \,2)\\
   \end{cases}
  \]

so $a$ and $b$ can not be both even, so they can be both odd or have different 
parities, a complete discussion shows that the solution of 
$\begin{pmatrix}a&b\\
  c&d\end{pmatrix}$  has 6 elements 
  
  \[A=\begin{pmatrix}1&1\\
  1&0\end{pmatrix}, B=\begin{pmatrix}0&1\\
  1&1\end{pmatrix}, C=\begin{pmatrix}1&0\\
  0&1\end{pmatrix},  D=\begin{pmatrix}1&1\\
  0&1\end{pmatrix}, E=\begin{pmatrix}1&0\\
  1&1\end{pmatrix}, F=\begin{pmatrix}0&1\\
  1&0\end{pmatrix}.
  \]
Calculation shows $A^3=C=1, F^2=1, F A F^{-1}=A^{-1}, A^2=B, A F =D, A^2 	F=E, A^3 	F=F$. So $Aut(F_{2})=D_{3}$, the dihedral group of order 6.

When $r=2$,  \[
  \begin{cases}
  a^2+b^2+a b\equiv 1(mod \,4)\\
  c^2+d^2+c d \equiv 1(mod \,4)\\
  2 a c +2 b d +a d +b c \equiv 1(mod \,4)\\
  a d-b c \text{ is odd}
  \end{cases}
  \]
  
so $a$ and $b$ can not be both even, so they can be both odd or have different 
parities. When they are both odd, they have different residue module 4; when they have different parities, one of them is 0 and another is odd. A complete discussion shows that the solution of 
$\begin{pmatrix}a&b\\
  c&d\end{pmatrix}$  has 12 elements 
  
  \[A=\begin{pmatrix}0&1\\
  -1&1\end{pmatrix}, B=\begin{pmatrix}-1&1\\
  -1&0\end{pmatrix}, C=\begin{pmatrix}-1&0\\
  0&-1\end{pmatrix}, D=\begin{pmatrix}0&-1\\
  1&-1\end{pmatrix}, E=\begin{pmatrix}1&-1\\
  1&0\end{pmatrix}, F=\begin{pmatrix}1&0\\
  0&1\end{pmatrix},\]
  \[ G=\begin{pmatrix}1&0\\
  1&-1\end{pmatrix}, H=\begin{pmatrix}1&-1\\
  0&-1\end{pmatrix}, I=\begin{pmatrix}0&-1\\
  -1&0\end{pmatrix}, J=\begin{pmatrix}-1&0\\
  -1&1\end{pmatrix}, K=\begin{pmatrix}-1&1\\
  0&1\end{pmatrix}, L=\begin{pmatrix}0&1\\
  1&0\end{pmatrix}.\]

Calculation shows $A^6=F=1, L^2=1, L A L^{-1}=A^{-1}, A^2=B, A^3=C, A^4=D, A^5=E,  A L =G, A^2 L=H,  A^3 L=I,  A^4 L=J,  A^5 L=K, A^6 L=L$. So $Aut(F_{2^2})=D_{6}$, the dihedral group of order 12.

 When $r\geq 3$, we can verify that the matrix $A, L$ above also satisfy conditions associate with (mod $2^r$) which means $D_{6}\leq Aut(F_{2^r})$, we will show $Aut(F_{2^3})=D_6\rtimes \mathbb{Z}_2$. 

When $r=3$,  \[
  \begin{cases}
  a^2+b^2+a b\equiv 1(mod \,8)\\
  c^2+d^2+c d \equiv 1(mod \,8)\\
  2 a c +2 b d +a d +b c \equiv 1(mod \,8)\\
  a d-b c \text{ is odd}
  \end{cases}
  \]
We rewrite the first equation to be $(a+b)^2\equiv a b+1 (mod \,8)$. According to Lem\ref{2 quadratic},  when $a b$ is even, $1+a b$ is odd, the existence of $a, b$ requires $1+a b\equiv 1(mod\, 8)$ which means $a b\equiv 0(mod\,8)$, as $a+b$ is odd, so one of them must be 0 and another one is odd, we can check that all such pairs satisfy $a^2+b^2+a b\equiv 1(mod \,8)$. When $a b$ is odd, $a$ and $b$ are all odd, $a^2\equiv b^2\equiv 1(mod\,8)$, so we have $a b\equiv -1(mod\, 8)$ , we can check that $a=-b$ and all such pairs satisfy $a^2+b^2+a b\equiv 1(mod \,8)$.A complete discussion shows that the solution of 
$\begin{pmatrix}a&b\\
  c&d\end{pmatrix}$  has 24 elements 
  
  \[A=\begin{pmatrix}0&1\\
  -1&1\end{pmatrix}, B=\begin{pmatrix}-1&1\\
  -1&0\end{pmatrix}, C=\begin{pmatrix}-1&0\\
  0&-1\end{pmatrix}, D=\begin{pmatrix}0&-1\\
  1&-1\end{pmatrix}, E=\begin{pmatrix}1&-1\\
  1&0\end{pmatrix}, F=\begin{pmatrix}1&0\\
  0&1\end{pmatrix},\]
  \[ G=\begin{pmatrix}1&0\\
  1&-1\end{pmatrix}, H=\begin{pmatrix}1&-1\\
  0&-1\end{pmatrix}, I=\begin{pmatrix}0&-1\\
  -1&0\end{pmatrix}, J=\begin{pmatrix}-1&0\\
  -1&1\end{pmatrix}, K=\begin{pmatrix}-1&1\\
  0&1\end{pmatrix}, L=\begin{pmatrix}0&1\\
  1&0\end{pmatrix}.\]
  \[M=\begin{pmatrix}3&-3\\
  0&-3\end{pmatrix}, N=\begin{pmatrix}0&-3\\
  -3&0\end{pmatrix}, O=\begin{pmatrix}-3&0\\
  -3&3\end{pmatrix}, P=\begin{pmatrix}-3&3\\
  0&3\end{pmatrix}, Q=\begin{pmatrix}0&3\\
  3&0\end{pmatrix}, R=\begin{pmatrix}3&0\\
  3&-3\end{pmatrix},\]
  \[ S=\begin{pmatrix}3&0\\
  0&3\end{pmatrix}, T=\begin{pmatrix}0&3\\
  -3&3\end{pmatrix}, U=\begin{pmatrix}-3&3\\
  -3&0\end{pmatrix}, V=\begin{pmatrix}-3&0\\
  0&-3\end{pmatrix}, W=\begin{pmatrix}0&-3\\
  3&-3\end{pmatrix}, X=\begin{pmatrix}3&-3\\
  3&0\end{pmatrix}.\]
  
Calculation shows $A^6=F=1, L^2=1, L A L^{-1}=A^{-1}, A^2=B, A^3=C, A^4=D, A^5=E,  A L =G, A^2 L=H,  A^3 L=I,  A^4 L=J,  A^5 L=K, A^6 L=L$. So $<A, L>=D_{6}\leq Aut(F_{2^3})$. As $D_6$ has an index 2, so 
$D_6\unlhd Aut(F_{2^3})$. We show $Aut(F_{2^3})=D_6\rtimes \mathbb{Z}_2$. Take $R$ as a generator of $\mathbb{Z}_2$, we have 
\[A R=M, A^2 R=N,  A^3 R=O, A^4 R=P, A^5 R=Q, A^6 R=R,\]
\[A L R=S, A^2 L R=T, A^3 L R=U, A^4 L R=V, A^5 L R=W, A^6 L R=X.\]
So  $Aut(F_{2^3})=<A, L><R>=D_{6}\mathbb{Z}_2$, as $D_6$ is normal and 
$D_{6}\cap \mathbb{Z}_2=1$, so $Aut(F_{2^3})=D_6\rtimes \mathbb{Z}_2$. 

In general, we have  $\left|Aut(F_{2^r})\right|=3\times2^r$. Let us try to lift every solution of  $Aut(F_{2^r})$ to two solutions of $Aut(F_{2^{r+1}})$ and all lifted solutions are different, thus we would have   $\left|Aut(F_{2^r})\right|=3\times2^r$
 by induction.
  
If $\begin{pmatrix}\bar{a}&\bar{b}\\
  \bar{c}&\bar{d}\end{pmatrix}$  is a solution of  $Aut(F_{2^{r+1}})$, then   $\begin{pmatrix}a&b\\
  c&d\end{pmatrix}
 $ is a solution of  $Aut(F_{2^{r}})$ where $\bar{a}\equiv a (mod \,2^r), \bar{b}\equiv b (mod \,2^r), \bar{c}\equiv c (mod \,2^r), \bar{d}\equiv d (mod \,2^r)$. It means every solution of $Aut(F_{2^{r+1}})$ can be obtained by lifting the solution of $Aut(F_{2^{r}})$. We want to explore how many ways the lifting can be.
 This is to say if we have a solution $\begin{pmatrix}a&b\\
  c&d\end{pmatrix}
 $ of  $Aut(F_{2^{r}})$, then what element should be lifted to get a solution of 
  $Aut(F_{2^{r+1}})$. Here, by saying lifting, we mean by adding $2^r$. If the element do not need lifted, we will keep the symbol  the same one in the solution of 
   $Aut(F_{2^{r+1}})$, otherwise we will use the lifted symbol such as $\bar{a}=a+2^r$.
 
 When we consider such lifting of solutions from   $Aut(F_{2^{r}})$, we need to split the conditions to be three one. The fourth one $a d -b c$ is odd does not need to consider because it is always true when we do such lifting.
 
 \begin{equation}\label{1}
  a^2+b^2+a b\overset{?}{\equiv}1 (mod \,2^{r+1})
  \end{equation}
 \begin{equation}\label{2}
  c^2+d^2+c d \overset{?}\equiv 1(mod \,2^{r+1})
  \end{equation}

 \begin{equation}\label{3}
   2 a c+2 b d+a d +b c \overset{?}\equiv 1(mod \,2^{r+1})
\end{equation}
 
 We know $\begin{pmatrix}a&b\\
  c&d\end{pmatrix}
 $ is the solution of   $Aut(F_{2^{r}})$, but it need not to be the solution of 
   $Aut(F_{2^{r+1}})$ . We need to consider the parity of $a, b, c, d$ when we do such lifting. For convenience and simplicity, we use $1$ to represent the element is odd and use  $0$ to represent the element is even. Then the type 
$ \begin{pmatrix}0&1\\
  1&0\end{pmatrix}$ means $a, d$ are even and $b, c$ are odd. There are six types of them which correspondents to $\left|GL(2, \mathbb{Z}_2)\right|=6$. 
 We call $ \begin{pmatrix}1&0\\
  0&1\end{pmatrix}$, $ \begin{pmatrix}1&0\\
  1&1\end{pmatrix}$, $ \begin{pmatrix}0&1\\
  1&0\end{pmatrix}$, $ \begin{pmatrix}0&1\\
  1&1\end{pmatrix}$, $ \begin{pmatrix}1&1\\
  1&0\end{pmatrix}$, $ \begin{pmatrix}1&1\\
  0&1\end{pmatrix}$  type $A, B, C, D, E, F$ respectively. In fact, we just need to consider the lifting of type $A, B, C$ because the lifting of type $D, E, F$
 can be easily obtained by the lifting of type $B$, just consider the symmetry.
 
 Next, we list how to lift according to the type and how many conditions they satisfy in $Aut(F_{2^{r+1}})$. 
 \begin{table}[!htbp]\label{tablec}
 \renewcommand\arraystretch{1.5}
  \centering 
 \caption{Solutions lifting from $Aut(F_{2^{r}})$ to $Aut(F_{2^{r+1}})$}  
 
\begin{tabular}{|c|c|c|c|c|}

\hline
{}&\eqref{1}, \eqref{2},\eqref{3}&\eqref{1}, \eqref{2}&\eqref{1}, \eqref{3}&
\eqref{2},\eqref{3}\\

\hline
A=$ \begin{pmatrix}1&0\\
  0&1\end{pmatrix}$&$\begin{pmatrix}a&b\\
  c&d\end{pmatrix}$, $\begin{pmatrix}\bar{a}&b\\
  c&\bar{d}\end{pmatrix}$&$\begin{pmatrix}\bar{a}&b\\
  c&d\end{pmatrix}$, $\begin{pmatrix}a&b\\
  c&\bar{d}\end{pmatrix}$&$\begin{pmatrix}a&b\\
  \bar{c}&d\end{pmatrix}$, $\begin{pmatrix}\bar{a}&b\\
  \bar{c}&\bar{d}\end{pmatrix}$&$\begin{pmatrix}\bar{a}&b\\
  c&d\end{pmatrix}$, $\begin{pmatrix}\bar{a}&\bar{b}\\
  \bar{c}&d\end{pmatrix}$\\
\hline
B=$ \begin{pmatrix}1&0\\
  1&1\end{pmatrix}$&$\begin{pmatrix}a&b\\
  c&d\end{pmatrix}$, $\begin{pmatrix}\bar{a}&b\\
  \bar{c}&\bar{d}\end{pmatrix}$&$\begin{pmatrix}\bar{a}&b\\
  c&d\end{pmatrix}$, $\begin{pmatrix}a&b\\
  \bar{c}&\bar{d}\end{pmatrix}$&$\begin{pmatrix}a&b\\
  \bar{c}&d\end{pmatrix}$, $\begin{pmatrix}\bar{a}&b\\
  c&\bar{d}\end{pmatrix}$&$\begin{pmatrix}\bar{a}&b\\
  c&d\end{pmatrix}$, $\begin{pmatrix}a&\bar{b}\\
  \bar{c}&d\end{pmatrix}$\\

\hline
C=$ \begin{pmatrix}0&1\\
  1&0\end{pmatrix}$&$\begin{pmatrix}a&b\\
  c&d\end{pmatrix}$, $\begin{pmatrix}a&\bar{b}\\
  \bar{c}&d\end{pmatrix}$&$\begin{pmatrix}a&\bar{b}\\
  c&d\end{pmatrix}$, $\begin{pmatrix}a&b\\
  \bar{c}&d\end{pmatrix}$&$\begin{pmatrix}a&b\\
  c&\bar{d}\end{pmatrix}$, $\begin{pmatrix}a&\bar{b}\\
  \bar{c}&\bar{d}\end{pmatrix}$&$\begin{pmatrix}a&\bar{b}\\
  c&d\end{pmatrix}$, $\begin{pmatrix}\bar{a}&\bar{b}\\
  c&\bar{d}\end{pmatrix}$\\
\hline
D=$ \begin{pmatrix}0&1\\
  1&1\end{pmatrix}$&$\begin{pmatrix}a&b\\
  c&d\end{pmatrix}$, $\begin{pmatrix}a&\bar{b}\\
  \bar{c}&\bar{d}\end{pmatrix}$&$\begin{pmatrix}a&\bar{b}\\
  c&d\end{pmatrix}$, $\begin{pmatrix}a&b\\
  \bar{c}&\bar{d}\end{pmatrix}$& $\begin{pmatrix}a&b\\
  c&\bar{d}\end{pmatrix}$, $\begin{pmatrix}a&\bar{b}\\
  \bar{c}&d\end{pmatrix}$& $\begin{pmatrix}a&\bar{b}\\
  c&d\end{pmatrix}$, $\begin{pmatrix}\bar{a}&b\\
c&\bar{d}\end{pmatrix}$\\

\hline
E=$ \begin{pmatrix}1&1\\
  1&0\end{pmatrix}$&$\begin{pmatrix}a&b\\
  c&d\end{pmatrix}$, $\begin{pmatrix}\bar{a}&\bar{b}\\
  \bar{c}&d\end{pmatrix}$&$\begin{pmatrix}a&b\\
  \bar{c}&d\end{pmatrix}$, $\begin{pmatrix}\bar{a}&\bar{b}\\
  c&d\end{pmatrix}$&$\begin{pmatrix}\bar{a}&b\\
  c&d\end{pmatrix}$, $\begin{pmatrix}a&\bar{b}\\
  \bar{c}&d\end{pmatrix}$&$\begin{pmatrix}a&b\\
  \bar{c}&d\end{pmatrix}$, $\begin{pmatrix}\bar{a}&b\\
  c&\bar{d}\end{pmatrix}$\\

\hline
F=$ \begin{pmatrix}1&1\\
  0&1\end{pmatrix}$&$\begin{pmatrix}a&b\\
  c&d\end{pmatrix}$, $\begin{pmatrix}\bar{a}&\bar{b}\\
 c&\bar{d}\end{pmatrix}$&$\begin{pmatrix}a&b\\
  c&\bar{d}\end{pmatrix}$,$\begin{pmatrix}\bar{a}&\bar{b}\\
  c&d\end{pmatrix}$&$\begin{pmatrix}a&\bar{b}\\
  c&d\end{pmatrix}$, $\begin{pmatrix}\bar{a}&b\\
  c&\bar{d}\end{pmatrix}$&$\begin{pmatrix}a&b\\
  c&\bar{d}\end{pmatrix}$, $\begin{pmatrix}a&\bar{b}\\
  \bar{c}&d\end{pmatrix}$\\

\hline
{}&\eqref{1}&\eqref{2}&\eqref{3}&$\times$\\

\hline
A=$ \begin{pmatrix}1&0\\
  0&1\end{pmatrix}$&$\begin{pmatrix}\bar{a}&b\\
  \bar{c}&d\end{pmatrix}$, $\begin{pmatrix}a&b\\
  \bar{c}&\bar{d}\end{pmatrix}$&$\begin{pmatrix}\bar{a}&b\\
  \bar{c}&d\end{pmatrix}$, $\begin{pmatrix}\bar{a}&\bar{b}\\
  c&d\end{pmatrix}$&$\begin{pmatrix}a&\bar{b}\\
  \bar{c}&d\end{pmatrix}$, $\begin{pmatrix}\bar{a}&\bar{b}\\
  \bar{c}&\bar{d}\end{pmatrix}$&$\begin{pmatrix}\bar{a}&\bar{b}\\
  \bar{c}&d\end{pmatrix}$, $\begin{pmatrix}a&\bar{b}\\
  \bar{c}&\bar{d}\end{pmatrix}$\\
\hline
B=$ \begin{pmatrix}1&0\\
  1&1\end{pmatrix}$&$\begin{pmatrix}a&b\\
  c&\bar{d}\end{pmatrix}$, $\begin{pmatrix}\bar{a}&b\\
  \bar{c}&d\end{pmatrix}$&$\begin{pmatrix}a&\bar{b}\\
  c&d\end{pmatrix}$, $\begin{pmatrix}\bar{a}&b\\
  \bar{c}&d\end{pmatrix}$&$\begin{pmatrix}\bar{a}&\bar{b}\\
  \bar{c}&d\end{pmatrix}$, $\begin{pmatrix}a&\bar{b}\\
  c&\bar{d}\end{pmatrix}$&$\begin{pmatrix}a&\bar{b}\\
  \bar{c}&d\end{pmatrix}$, $\begin{pmatrix}\bar{a}&\bar{b}\\
  \bar{c}&d\end{pmatrix}$\\

\hline
C=$ \begin{pmatrix}0&1\\
  1&0\end{pmatrix}$&$\begin{pmatrix}a&\bar{b}\\
  c&\bar{d}\end{pmatrix}$, $\begin{pmatrix}a&b\\
  \bar{c}&\bar{d}\end{pmatrix}$&$\begin{pmatrix}a&\bar{b}\\
  c&\bar{d}\end{pmatrix}$, $\begin{pmatrix}\bar{a}&\bar{b}\\
  c&d\end{pmatrix}$& $\begin{pmatrix}\bar{a}&b\\
  c&\bar{d}\end{pmatrix}$, $\begin{pmatrix}\bar{a}&\bar{b}\\
  \bar{c}&\bar{d}\end{pmatrix}$&$\begin{pmatrix}\bar{a}&\bar{b}\\
  c&\bar{d}\end{pmatrix}$, $\begin{pmatrix}\bar{a}&b\\
  \bar{c}&\bar{d}\end{pmatrix}$\\
\hline
D=$ \begin{pmatrix}0&1\\
  1&1\end{pmatrix}$&$\begin{pmatrix}a&b\\
  \bar{c}&d\end{pmatrix}$,  $\begin{pmatrix}a&\bar{b}\\
  c&\bar{d}\end{pmatrix}$&$\begin{pmatrix}\bar{a}&b\\
  c&d\end{pmatrix}$,  $\begin{pmatrix}a&\bar{b}\\
  c&\bar{d}\end{pmatrix}$&$\begin{pmatrix}\bar{a}&\bar{b}\\
  c&\bar{d}\end{pmatrix}$, $\begin{pmatrix}\bar{a}&b\\
  \bar{c}&d\end{pmatrix}$&$\begin{pmatrix}\bar{a}&b\\
  c&\bar{d}\end{pmatrix}$, $\begin{pmatrix}\bar{a}&\bar{b}\\
  c&\bar{d}\end{pmatrix}$\\

\hline
E=$ \begin{pmatrix}1&1\\
  1&0\end{pmatrix}$&$\begin{pmatrix}a&\bar{b}\\
  c&d\end{pmatrix}$, $\begin{pmatrix}\bar{a}&b\\
  \bar{c}&d\end{pmatrix}$&$\begin{pmatrix}a&b\\
  c&\bar{d}\end{pmatrix}$, $\begin{pmatrix}\bar{a}&b\\
  \bar{c}&d\end{pmatrix}$&$\begin{pmatrix}\bar{a}&b\\
  \bar{c}&\bar{d}\end{pmatrix}$, $\begin{pmatrix}a&\bar{b}\\
  c&\bar{d}\end{pmatrix}$&$\begin{pmatrix}\bar{a}&b\\
  c&\bar{d}\end{pmatrix}$, $\begin{pmatrix}\bar{a}&b\\
  \bar{c}&\bar{d}\end{pmatrix}$\\

\hline
F=$ \begin{pmatrix}1&1\\
  0&1\end{pmatrix}$&$\begin{pmatrix}\bar{a}&b\\
  c&d\end{pmatrix}$,  $\begin{pmatrix}a&\bar{b}\\
  c&\bar{d}\end{pmatrix}$&$\begin{pmatrix}a&b\\
  \bar{c}&d\end{pmatrix}$,  $\begin{pmatrix}a&\bar{b}\\
  c&\bar{d}\end{pmatrix}$&$\begin{pmatrix}a&\bar{b}\\
  \bar{c}&\bar{d}\end{pmatrix}$, $\begin{pmatrix}\bar{a}&b\\
  \bar{c}&d\end{pmatrix}$&$\begin{pmatrix}a&\bar{b}\\
  \bar{c}&d\end{pmatrix}$, $\begin{pmatrix}a&\bar{b}\\
  \bar{c}&\bar{d}\end{pmatrix}$\\
\hline

\end{tabular}

\end{table}
 
\begin{table}[!htbp]\label{tablec1}
 \renewcommand\arraystretch{1.5}
  \centering 
 \caption{Solutions lifting from $Aut(F_{2^2})$ to $Aut(F_{2^3})$}  
 
\begin{tabular}{|c|c|c|c|c|}

\hline
{}&\eqref{1}, \eqref{2},\eqref{3}&\eqref{1}, \eqref{2}&\eqref{1}, \eqref{3}&
\eqref{2},\eqref{3}\\

\hline
A=$ \begin{pmatrix}1&0\\
  0&1\end{pmatrix}$&{}&{}&{}&{}\\
\hline
B=$ \begin{pmatrix}1&0\\
  1&1\end{pmatrix}$&{}&{}&{}&{}\\

\hline
C=$ \begin{pmatrix}0&1\\
  1&0\end{pmatrix}$&{}&{}&{}&{}\\
\hline
D=$ \begin{pmatrix}0&1\\
  1&1\end{pmatrix}$&{}&{}&{}&{}\\

\hline
E=$ \begin{pmatrix}1&1\\
  1&0\end{pmatrix}$&{}&{}&{}&{}\\

\hline
F=$ \begin{pmatrix}1&1\\
  0&1\end{pmatrix}$&{}&{}&{}&{}\\

\hline
{}&\eqref{1}&\eqref{2}&\eqref{3}&$\times$\\

\hline
A=$ \begin{pmatrix}1&0\\
  0&1\end{pmatrix}$&{}&{}&$C\rightarrow{S, C}; F\rightarrow{F, V}$&{}\\
\hline
B=$ \begin{pmatrix}1&0\\
  1&1\end{pmatrix}$&{}&{}&$G\rightarrow{O, G}; J\rightarrow{J, R}$&{}\\

\hline
C=$ \begin{pmatrix}0&1\\
  1&0\end{pmatrix}$&{}&{}&$I\rightarrow{Q, I}; L\rightarrow{L, N}$&{}\\
\hline
D=$ \begin{pmatrix}0&1\\
  1&1\end{pmatrix}$&{}&{}&$A\rightarrow{W, A}; D\rightarrow{D, T}$&{}\\

\hline
E=$ \begin{pmatrix}1&1\\
  1&0\end{pmatrix}$&{}&{}&$B\rightarrow{B, X}; E\rightarrow{U, E}$&{}\\
\hline
F=$ \begin{pmatrix}1&1\\
  0&1\end{pmatrix}$&{}&{}&$H\rightarrow{H, P}; K\rightarrow{M, K}$&{}\\\hline

\end{tabular}

\end{table}
 
From the chart, we know every solution of  $Aut(F_{2^r})$ can be lifted to two solutions of $Aut(F_{2^{r+1}})$, and for a fixed type, the liftings are all different; for different types, the liftings are also different because of the parities of elements. Thus all of the lifted solutions are different, so our claim verified.
 
 Let us take $r=2$ for example, we will lift every solution in $Aut(F_{2^2})$ to two solutions in $Aut(F_{2^3})$. 
 
 \end{proof}
\end{thm}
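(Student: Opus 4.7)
The plan is to split the eight families into three categories based on the shape of the underlying abelian group, since an automorphism of $(A,q)$ is first an abelian group automorphism of $A$ (either a unit scalar when $A$ is cyclic, or an invertible $2\times 2$ matrix when $A \cong \mathbb{Z}_{2^r}\times\mathbb{Z}_{2^r}$) and second a preserver of the quadratic form $q$ modulo $1$.

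For the six cyclic families $A_{p^r}, B_{p^r}$ (odd $p$) and $A_{2^r}, B_{2^r}, C_{2^r}, D_{2^r}$, an automorphism $f$ is determined by $f(1)=j$, and the constraint $q(j)\equiv q(1) \pmod{1}$ reduces, after canceling the invertible coefficient in the numerator, to the single congruence $j^2 \equiv 1 \pmod{p^r}$ or $\pmod{2^{r+1}}$. I would then invoke Lemma~\ref{p quadratic} for the odd prime case to obtain exactly the two solutions $j=\pm 1$, giving $\mathbb{Z}_2$. For $p=2$ I would invoke Lemma~\ref{2 quadratic}: at $r=1$ only $j\equiv 1$ survives in $\mathbb{Z}_{2}$, while for $r\geq 2$ the four residues $\pm 1, \pm(1+2^r)$ modulo $2^{r+1}$ collapse to the two classes $\pm 1$ in $\mathbb{Z}_{2^r}$, yielding $\mathbb{Z}_2$ again.

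For the family $E_{2^r}$, I would write a candidate $M=\begin{pmatrix} a & b \\ c & d \end{pmatrix}$, expand the invariance equation $q(Me_{(m,n)})\equiv q(e_{(m,n)}) \pmod 1$ for all $(m,n)$, and extract the system $ab\equiv 0$, $cd\equiv 0$, $ad+bc\equiv 1 \pmod{2^r}$, together with $ad-bc$ odd. The parity constraint forces $a, b$ to have different parities (and likewise $c, d$); combined with $ab\equiv 0 \pmod{2^r}$, the even entry must vanish because the other is a unit. This bifurcates solutions into diagonal matrices $A_a=\mathrm{diag}(a, a^{-1})$ and anti-diagonal matrices $B_b=\begin{pmatrix} 0 & b \\ b^{-1} & 0\end{pmatrix}$, each parametrized by $(\mathbb{Z}/2^r)^*$. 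A short computation shows that conjugation by $B_1$ inverts $A_a$, so the group is $(\mathbb{Z}/2^r)^* \rtimes \mathbb{Z}_2$ with the nontrivial involution acting by inversion; the three cases $r=1,2,\geq 3$ then follow from the standard structure $(\mathbb{Z}/2^r)^*\cong\mathbb{Z}_2\times\mathbb{Z}_{2^{r-2}}$ for $r\geq 3$.

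The family $F_{2^r}$ is the main obstacle. Here the invariance equations become the nonlinear system $a^2+ab+b^2\equiv 1$, $c^2+cd+d^2\equiv 1$, $2ac+ad+bc+2bd\equiv 1 \pmod{2^r}$, and no simple factorization produces the solutions. For the base cases $r=1,2,3$ I would enumerate all matrices explicitly and identify an element $A$ of order $6$ (or $3$ when $r=1$) and an involution $L$ with $LAL^{-1}=A^{-1}$, exhibiting $D_3, D_6$, and then an extra commuting involution $R$ realizing the central extension $D_6\rtimes\mathbb{Z}_2$ when $r=3$. For $r\geq 3$ I would establish $|\mathrm{Aut}(F_{2^r})|=3\cdot 2^r$ by an inductive lifting argument: every solution modulo $2^r$ admits exactly two lifts to a solution modulo $2^{r+1}$. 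Concretely, I would organize solutions by their $\mathrm{GL}(2,\mathbb{F}_2)$ parity class (six types), and for each type check that the three defining congruences, regarded as conditions on the four binary shifts $(\epsilon_a,\epsilon_b,\epsilon_c,\epsilon_d)\in\mathbb{F}_2^4$, cut out an affine subspace of dimension exactly $1$. The parity type and the resulting $\mathbb{F}_2$-linear system are preserved along the induction, so solution counts double at each step, beginning from $|\mathrm{Aut}(F_{2^3})|=24=3\cdot 2^3$. The technically delicate step will be verifying that for each of the six parity types the shift system has rank exactly $3$; this is where I would expect to spend the bulk of the case analysis, since the coefficient matrix depends on the reduction of $(a,b,c,d)$ modulo $4$ and not only on their parities.
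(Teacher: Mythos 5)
Your proposal is correct and follows essentially the same route as the paper's proof: reduction of the six cyclic families to $j^2\equiv 1$ via Lemmas \ref{p quadratic} and \ref{2 quadratic}, the diagonal/anti-diagonal bifurcation giving $Aut(E_{2^r})\cong(\mathbb{Z}/2^r\mathbb{Z})^{*}\rtimes\mathbb{Z}_2$, and for the $F$ family an explicit enumeration at $r\leq 3$ followed by an inductive two-lifts-per-solution count, which is precisely the content of the paper's lifting tables recast as an affine-linear system over $\mathbb{F}_2$. One remark: the $3\times 4$ shift matrix has rows $(b,a,0,0)$, $(0,0,d,c)$, $(d,c,b,a)$ reduced modulo $2$ (the increments of all square terms and of $2ac$, $2bd$ vanish modulo $2^{r+1}$), so it depends only on parities rather than on residues modulo $4$ as you feared, and its rank is $3$ uniformly since a linear dependence among the rows would force $a\equiv c$ and $b\equiv d \pmod 2$, contradicting that $ad-bc$ is odd; thus the per-type case analysis you anticipated as the bulk of the work collapses to this one observation.
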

 
\subsection{Gauging symmetries of prime  abelian anyon models}
It would be very interesting to classify the anyon models obtained from gauging the symmetries of prime abelian anyon models \cite{CGPW,CSW}.  But as the three fermion example shows, it is in general a very difficult problem \cite{CGPW}.  In this subsection, we derive the total quantum dimensions of the resulting anyon models and then Property $F$ for those modular categories follows.

\begin{prop}\label{FPdim prime gauging}

The braided $G$-crossed extension of $\mathcal{B}=(A,q)$ is denoted as  $\mathcal{B}_G^{\times}$, where $G=Aut(\mathcal{B})$. Let $\mathcal{C_B}=
\mathcal{B}_G^{\times, G}$ be the G-equivariantization of $\mathcal{B}_G^{\times}$, the anyon model $\mathcal{C_B}$ is called the $G$-gauging of  $\mathcal{B}$.
Then $\mathcal{C_B}$ has Property F for prime anyon models $\mathcal{B}$ follows from the following:

$FPdim(\mathcal{Z(C_B)})=\left|G\right|^4(FPdim(\mathcal{B}))^2$, and specifically

$FPdim(\mathcal{Z}(\mathcal{C}_{A_{p^r}}))=FPdim(\mathcal{Z}(\mathcal{C}_{B_{p^r}}))=2^{4}p^{2 r}$;

$FPdim(\mathcal{Z}(\mathcal{C}_{A_{2}}))=FPdim(\mathcal{Z}(\mathcal{C}_{B_{2}}))=2^2$;

$FPdim(\mathcal{Z}(\mathcal{C}_{A_{2^r}}))=FPdim(\mathcal{Z}(\mathcal{C}_{B_{2^r}}))=FPdim(\mathcal{Z}(\mathcal{C}_{C_{2^r}}))=FPdim(\mathcal{Z}(\mathcal{C}_{D_{2^r}}))=2^{2 r+4}$ for $r\geq2$;

$FPdim(\mathcal{Z}(\mathcal{C}_{E_{2^r}}))=2^{8 r}$;

$FPdim(\mathcal{Z}(\mathcal{C}_{F_{2^r}}))=2^{8 r}3^4$ for $r\geq 1$.
\end{prop}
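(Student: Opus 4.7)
The plan is to reduce every case to the master formula $FPdim(\mathcal{Z}(\mathcal{C}_\mathcal{B})) = |G|^4 \cdot (FPdim(\mathcal{B}))^2$ and then tabulate. For an abelian anyon model $\mathcal{B} = (A,q)$ every simple object has quantum dimension $1$, so $FPdim(\mathcal{B}) = |A|$. The $G$-crossed extension $\mathcal{B}_G^\times = \bigoplus_{g\in G} \mathcal{B}_g$ is faithfully $G$-graded with trivial component $\mathcal{B}_e = \mathcal{B}$ and each $\mathcal{B}_g$ an invertible $\mathcal{B}$-module, whence $FPdim(\mathcal{B}_G^\times) = |G| \cdot FPdim(\mathcal{B})$; taking the $G$-equivariantization multiplies $FPdim$ by another factor of $|G|$, yielding
\begin{equation*}
FPdim(\mathcal{C}_\mathcal{B}) \;=\; FPdim(\mathcal{B}_G^{\times, G}) \;=\; |G|^2 \cdot FPdim(\mathcal{B}) \;=\; |G|^2 \cdot |A|.
\end{equation*}
The gauging $\mathcal{C}_\mathcal{B}$ is again modular, so by M\"uger's theorem $\mathcal{Z}(\mathcal{C}_\mathcal{B}) \simeq \mathcal{C}_\mathcal{B} \boxtimes \overline{\mathcal{C}_\mathcal{B}}$, giving $FPdim(\mathcal{Z}(\mathcal{C}_\mathcal{B})) = (FPdim(\mathcal{C}_\mathcal{B}))^2 = |G|^4 \cdot |A|^2$.

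Next I would substitute $|A|$ from Thm.~\ref{classification_abelian} and $|G|$ from Thm.~\ref{Aut} family by family. For the odd-prime families $A_{p^r}, B_{p^r}$, $|A| = p^r$ and $|G| = 2$ yield $2^4 p^{2r}$. For $A_2, B_2$, $|A| = 2$ and $|G| = 1$ yield $2^2$. For $A_{2^r}, B_{2^r}, C_{2^r}, D_{2^r}$ with $r \geq 2$, $|A| = 2^r$ and $|G| = 2$ yield $2^{2r+4}$. For $E_{2^r}$, $|A| = 2^{2r}$ and $|G|$ has orders $2, 4, 2^r$ in the cases $r=1$, $r=2$, $r\geq 3$ respectively, and in every branch $|G|^4 \cdot |A|^2 = 2^{8r}$. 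For $F_{2^r}$ with $r \geq 1$, Thm.~\ref{Aut} supplies $|G| = 3 \cdot 2^r$ (only the order is needed, so the unknown abstract structure for $r \geq 4$ is irrelevant), giving $3^4 \cdot 2^{8r}$. Each entry then matches the claim.

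Finally, Property F for $\mathcal{C}_\mathcal{B}$ follows once we know the category is weakly integral, which is immediate from $FPdim(\mathcal{C}_\mathcal{B}) = |G|^2 |A| \in \mathbb{Z}$. Since $\mathcal{C}_\mathcal{B}$ is obtained from a pointed (hence nilpotent and weakly group-theoretical) modular category by gauging a finite group action, it remains weakly group-theoretical. Property F for weakly group-theoretical braided fusion categories is a theorem of Etingof--Nikshych--Ostrik in the Naidu--Rowell framework, which closes the argument. The main step with any content is the multiplicativity of $FPdim$ under $G$-crossed extension and equivariantization; after that, the substitutions are pure bookkeeping, and the only care required is to verify the three branches of $E_{2^r}$ collapse to $2^{8r}$ and that only $|Aut(F_{2^r})|$, not its isomorphism class, enters the formula.
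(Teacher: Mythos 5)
Your proposal is correct, and the dimension count is essentially the paper's: the paper runs the same chain $FPdim(\mathcal{Z}(\mathcal{C}_\mathcal{B}))=|G|^2\,FPdim(\mathcal{Z}(\mathcal{B}_G^{\times}))=|G|^4\,FPdim(\mathcal{Z}(\mathcal{B}))=|G|^4(FPdim(\mathcal{B}))^2$, using that a faithful $G$-extension and a $G$-equivariantization each multiply $FPdim$ by $|G|$, followed by the same substitution of $|A|$ and $|Aut(\mathcal{B})|$ from Thm.~\ref{Aut} (and you correctly observe that $|Aut(E_{2^r})|=2^r$ in all three branches and that only $|Aut(F_{2^r})|=3\cdot 2^r$, not its isomorphism type, enters). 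Where you genuinely diverge is the Property F step. The paper embeds $\mathcal{C}_\mathcal{B}$ into its center via the canonical braided functor $\mathcal{C}_\mathcal{B}\rightarrow \mathcal{Z}(\mathcal{C}_\mathcal{B})$ and then invokes the Burnside-type theorem of Etingof--Nikshych--Ostrik that a fusion category of dimension $p^aq^b$ is weakly group-theoretical; this is exactly why the explicit values $2^a$, $2^4p^{2r}$, $2^{8r}3^4$ are load-bearing in the paper's argument. You instead argue that $\mathcal{C}_\mathcal{B}$ is weakly group-theoretical directly, because it is obtained by gauging a pointed (hence nilpotent) modular category and the weakly group-theoretical class is closed under extensions and equivariantizations. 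Your route is cleaner and more robust: it needs no dimension computation at all and would apply verbatim to any abelian anyon model with any finite symmetry group, whereas the paper's argument hinges on the gauged dimensions happening to have at most two prime divisors; the paper's route, on the other hand, needs only the single ENO dimension theorem rather than the closure properties.

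Two small slips to fix. First, the opening sentence of your last paragraph suggests Property F follows from weak integrality alone; that is only the Naidu--Rowell \emph{conjecture}, and your actual argument (weak group-theoreticity) is what carries the proof, so the weak-integrality remark should be deleted or demoted. Second, the attribution: Property F for weakly group-theoretical braided fusion categories is the theorem of Green--Nikshych \cite{GN19} (the paper's citation), with Etingof--Nikshych--Ostrik supplying the weakly group-theoretical machinery and Naidu--Rowell the conjecture it answers. Neither slip affects the validity of the mathematics.
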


\begin{proof}

$FPdim(\mathcal{Z(C_B)})=\left|G\right|^2FPdim(\mathcal{Z}(\mathcal{B}_G^{\times}))=\left|G\right|^2\left|G\right|^2FPdim(\mathcal{Z}(\mathcal{B}))
=\left|G\right|^4(FPdim(\mathcal{B}))^2.$

Since $\mathcal{C_B}$ is braided, there exists a canonical braided tensor functor $F: \mathcal{C_B}\rightarrow \mathcal{Z(C_B)}$. Thus it suffices to prove Property F for $ \mathcal{Z(C_B)}$. Since $FPdim(\mathcal{Z(C_B)})$ are of all the form $p^{a}q^b$, where $p, q$ are primes and $a, b$ are nonnegative integers. Thus all $ \mathcal{Z(C_B)}$ are weakly group-theoretical and therefore have Property F \cite{GN19}. 
\end{proof}

\section{Realization via quantum abelian Chern-simons theories}\label{conkmatrix}

Quantum abelian Chern-Simons (CS) theory has been used to model abelian fractional quantum Hall states.  In physical literature, such a theory is indexed, not necessarily unique, by a $K$-matrix, i.e., a non-singular  integral symmetric matrix.  Our interest is only for boson/spin theories, so the $K$-matrix has to be even.  Abelian CS theories are gauge theories with torus gauge group $T^n$, so the $K$-matrix simply represents the level in $H^4(BG,\mbbZ)$ of CS theories.

Given an abelian anyon model $\mcA=(A,\theta)$, $\mcA$ is said to be realized by a CS theory if there is an even $K$-matrix $K_A$ such that the associated anyon model is $\mcA$ as described below.

\subsection{Lattice abelian anyon models}

A lattice $\Lambda$ is a free $\mbbZ$-module of finite rank=$n$ with an integral non-singular symmetric bilinear form $<x,y>: \Lambda \times \Lambda \rightarrow \mbbZ$.  Its dual lattice is the module $\Lambda^*=\textrm{Hom}(\Lambda, \mbbZ)$, which is not an integral lattice unless $\Lambda$ is unimodular ($|\textrm{det}\Lambda|=1$) or self-dual ($\Lambda\cong \Lambda^*$), which are equivalent in our case.  There is an obvious inclusion of $\Lambda \subset \Lambda^*$ and {\it the discriminant group} $A_\Lambda$ of $\Lambda$ is the finite abelian group $\Lambda^*/\Lambda$ of order=$|\textrm{det}\Lambda|$.  

There is a well-known subtlety between quadratic forms on finite abelian groups to $\mbbQ/\mbbZ$ and even lattices. In order for a quadratic form to be able to be always lifted to a bilinear form over $\mbbQ/\mbbZ$, we follow the use of quadratic forms to $\mbbQ/2\mbbZ$ for discriminant forms of even lattices below, denoted as $q_{2,\Lambda}$ for a lattice $\Lambda$.

The bilinear form $<,>$ on $\Lambda$ extends to bilinear from $b(,)$ on $\Lambda^*$, which descends to a $\mbbQ/\mbbZ$-valued bilinear form on $A_\Lambda$ by $b([x],[y])=<x,y>$ mod $\mbbZ$.  If $\Lambda$ is even, i.e. $b(x,x)$ is even for any $x\in \Lambda$, then $\Lambda$ induces a $\mbbQ/2\mbbZ$-valued quadratic form $q_{2,\Lambda}$ on $A_\Lambda$, called {\it the discriminant form}, by:

$$2b([x],[y])=q_{2,\Lambda}([x+y])-q_{2,\Lambda}([x])-q_{2,\Lambda}([y]), q_{2,\Lambda}([0])=0.$$
Then the metric group $(A_\Lambda, q_{2,\Lambda})$ determines an abelian anyon model $(A_\Lambda, \theta)$ with $\theta ([x])=e^{\pi i q_{2,\Lambda}([x])}$, and will be called the associate abelian anyon model to the even lattice $\Lambda$.

If a basis of $\Lambda$ is chosen so that $\Lambda\cong \mbbZ^n$, then its Gram matrix $K_\Lambda$ is a $K$-matrix $K_A$.

In the sequel, we will use $K_A$ for a $K$-matrix for an abelian anyon model $(A,\theta)$ and $W_A$ the inverse matrix of $K_A$, which is the Gram matrix of the dual lattice $L_A^*$.  The lattice with $K_A$ as Gram matrix will be denoted by $L_A$.  

If $K_A$ is regarded as a map $K_A: \mbbZ^n\rightarrow \mbbZ^n$, then the discriminant group of $L_A$ is isomorphic to cokernel group  $\mbbZ^n/K_A(\mbbZ^n)$.  The central charge of the associated anyon model of $L_A$ is the signature of $K_A$ and the order of the discriminant group is $|\textrm{det}(K_A)|$.

If $K_A$ is $+$-definite, then there is an orthogonal matrix $O$ such that $K_A=O^tDO$, where $D$ is diagonal with positive diagonal entries.  Then the row vectors $\{e_i\}$ of $O^t$ are eigen-vectors of $K_A$ and the lattice $L_A$ is generated by the row vectors of $O^t\sqrt{D}$, which is a generating matrix of the lattice $L_A$. 

\subsection{Wall algorithm}

The following algorithm is used to construct bilinear forms on finite abelian groups in \cite{Wall63}.  We adapt the algorithm to find $K$-matrices for abelian anyon models explicitly, first not necessarily $+$-definite in this section and then $+$-definite ones later for VOAs.

The input of the algorithm is a prime power $p^r$ for some positive integer $r$ and a natural number $n$ such that $(n,p)=1$ and $0<n<p^r$\footnote{In \cite{Wall63}, the integer $n$ is the value in the bilinear from $b(x,x)=\frac{n}{p^r}$ for a generator $x$ of a cyclic prime abelian anyon model.  In the applications below, we are mainly interested in the case that $n$ is actually the central charge.  It is not always true that we will obtain the right central charge or discriminant form from the output of the algorithm.}.  The output of the algorithm is a tri-diagonal matrix $W_A$ whose inverse $K_A$ is an even integral matrix with $|\textrm{det}(K_A)|=p^r$, not necessarily definite.

Since $(n,p^r)=1$, then the integral equation $1=nd_1-p^rd_2$ has solutions for some $d_1$ and $d_2$.

The original Wall algorithm is to find integers $\{d_i,a_i\}, 1\leq i\leq k$ in $k$ steps as follows:
\begin{align*}\label{1}
1 &= nd_1-p^rd_2\\
  d_1 &=a_1d_2-d_3 \\
  \vdots\\
  d_{k-1}&=a_{k-1}d_k-d_{k+1}\\
  d_k &=a_kd_{k+1}\\
where  \quad d_{k+1}&=\pm 1.
\end{align*}

For each i, $a_{i-1}d_i$ is taken to be the closest even multiple of $d_i$ to $d_{i-1}$, and the remainder $d_{i+1}$
would satisfy $\left|d_{i+1}\right|<\left|d_i\right|$. The algorithm terminates when the remainder $d_{k+2}$ is 0, then $d_{k+1}$ would be a common divisor of $d_1$ and $d_2$, which forces $d_{k+1}=\pm 1.$

The output $(k+1)\times (k+1)$-matrix $W_A$ of the algorithm is:
\begin{equation}W_A=
\left(
\begin{array}{cccccc}
np^{-r} & 1 & 0 & \cdots & 0 & 0 \\
1 & a_1 & 1 &  \cdots & 0& 0 \\
0 & 1 & a_2 & \cdots & 0 & 0 \\
\vdots &  \vdots &  \vdots &  \ddots &  \vdots &  \vdots \\
0 & 0 & 0 & \cdots & a_{k-1} & 1 \\
0 & 0 & 0 &\cdots & 1& a_k 
\end{array}
\right)
\end{equation}

We denote the determinant of the last $i$ rows and columns by $A_i$.
Suppose $d_{k+1}=\epsilon=\pm 1$, then $A_1=a_k= \epsilon d_k,A_2=a_{k-1}a_k-1=\epsilon a_{k-1}d_k-1= \epsilon d_{k-1}.$
Inductively, suppose $A_i= \epsilon d_{k+1-i}$, then $A_{i+1}=a_{k-i}A_i-A_{i-1}=a_{k-i}(\epsilon d_{k+1-i})-\epsilon d_{k-i+2}=\epsilon d_{k-i}$.
Then the determinant of $K_A$ is $np^{-r}A_k-A_{k-1}=np^{-r}(\epsilon d_1)-(\epsilon d_2)=\epsilon p^{-r}$.  In either case, $\left|det(K_A)\right|=\left|A\right|$ as $K_A=W_A^{-1}$.

\subsection{Construction of K-matrices for abelian anyon models}

The prime decomposition of abelian anyon models 
reduces the construction of $K$-matrices to the prime ones.  
In this section, we construct a $K$-matrix
for each prime abelian anyon model using the Wall algorithm.

Let $p$ be an odd number, and let $c=2m$ or $2n$ for $A_{p^r}$ or 
$B_{p^r}$ as in Thm. \ref{classification_abelian}, respectively, excluding the case $A_3$, which can be realized by $E_6$ or its complement. 
Note that $(c, p)=1$ and $1\leq c<p$.
It follows that $1=cd_1-p^rd_2$ has integral solutions for $d_1$ and $d_2$. 

Suppose $d_1^{'}$ and $d_2^{'}$ are one particular solution of the equation $1=cd_1-p^rd_2$, then the general solution of $1=cd_1-p^rd_2$ would be $d_1=d_1^{'}+p^{r}s,d_2=d_2^{'}+cs$,
where $s$ is a parameter integer. Suppose $0<c<p^r$, we can choose a large s such that $d_1>0$ and $d_2>0$, then it follows that $0<d_2<d_1$.  Since $p^r$ is odd, $c$ is even, then $d_2$ would be odd as $d_1$ can always be chosen to be even by a shift of $p^r$.  Note that in Wall algorithm, $d_s$ has the same parity as $d_{s+2}$ for $1\leq s\leq k-1$, so in this case, $k$ must be odd.

Parallel to above discussion, set $c=1$ for $A_{2^r}, r\geq1$; set $c=3$ for $D_{2^r}, r\geq2$; set $c=5$ for $C_{2^r}, r\geq 3$;  set $c=7$ for $B_{2^r}, r\geq 3$.  We always have $(c, 2)=1$ and $1\leq c<2^r$.
Suppose Wall's algorithm terminates with $d_{k+1}=\pm 1$, then  $k$ must be even.

We will denote the resulting $K$ matrices as $K_{A_{p^r}}, K_{B_{p^r}}, K_{A_{2^r}}, K_{B_{2^r}}, K_{C_{2^r}}, K_{D_{2^r}}$ according to the types of $c$ and $p$.

As seen in last section, the determinant of $K_A$ is as desired. Lem \ref{signature} and Cor \ref{generalized signature} below will be used to verify that the signature of $K_A$ equals to the central charge of $A$ for an explicit $K_A$.
 
\begin{lem}[Sylvester 1852, Gundelfinger 1881, Frobenius 1895]\label{signature}

The eigenvalues $\lambda_k(S)$ of a regular symmetric $n\times n$ matrix $S$ have the signs of the successive minor quotients
\[sign(\lambda_k(S))=sign(\mu_k(S)/\mu_{k-1}(S))\in \{-1,1\}\]
for $k=1, 2, \cdots, n$, with $\mu_0(S)=1.$ The signature is 
\[\tau(S)=\sum_{k=1}^{n}sign(\mu_k(S)/\mu_{k-1}(S))\in\{-n, -n+1, \cdots, n\} \]
\end{lem}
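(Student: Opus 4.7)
The plan is to reduce the statement to Sylvester's law of inertia via the Jacobi $LDL^T$ decomposition, which makes the minor quotients appear naturally as the diagonal of a congruent diagonal matrix. Strictly, the phrase ``signs of the eigenvalues'' should be read as an equality of multisets: the multiset $\{\mathrm{sign}(\mu_k/\mu_{k-1})\}_{k=1}^n$ agrees with the multiset of signs of the eigenvalues, so that the summed formula for the signature $\tau(S)$ follows.

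First I would recall that, since $S$ is regular in the sense that every leading principal minor $\mu_k(S)$ is nonzero, the symmetric Gaussian elimination procedure goes through without pivoting. This yields a unique factorization $S = L D L^T$, with $L$ unit lower triangular and $D = \mathrm{diag}(d_1,\dots,d_n)$. A direct comparison of leading principal minors on both sides gives
\[
\mu_k(S) = \det(L_k)\,\det(D_k)\,\det(L_k^T) = d_1 d_2 \cdots d_k,
\]
since $\det(L_k)=1$. Solving this recursion with $\mu_0(S)=1$ produces $d_k = \mu_k(S)/\mu_{k-1}(S)$ for all $k$, which is the source of the claimed signs.

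Next I would invoke Sylvester's law of inertia: since $L$ is nonsingular, $S$ and $D$ are congruent, hence have the same signature. The signature of a diagonal matrix is obviously the sum of the signs of its diagonal entries, which in our case is exactly
\[
\tau(S) \;=\; \sum_{k=1}^{n} \mathrm{sign}(d_k) \;=\; \sum_{k=1}^{n} \mathrm{sign}\bigl(\mu_k(S)/\mu_{k-1}(S)\bigr),
\]
giving the stated formula. The assignment $\lambda_k(S) \leftrightarrow d_k$ then realizes the indexed statement about eigenvalues (up to the inherent ordering ambiguity in eigenvalues, which is why only the multiset of signs is canonical).

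The main obstacle is essentially just bookkeeping: verifying that the regularity hypothesis is precisely what is needed to run the unpivoted $LDL^T$ decomposition, and phrasing the ``signs of eigenvalues'' correctly given that eigenvalues carry no canonical order. Once that is pinned down, the proof is a one-line consequence of Jacobi's minor formula and Sylvester's law. No deeper input (such as Cauchy interlacing or continuous perturbation arguments) is required, though Cauchy interlacing would give an alternative route via an inductive matching between eigenvalues of the $k\times k$ leading block and those of the $(k+1)\times(k+1)$ block.
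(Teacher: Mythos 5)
Your proof is correct, but note that there is no proof in the paper to compare against: Lemma \ref{signature} is stated as a classical result --- hence the attribution to Sylvester (1852), Gundelfinger (1881), and Frobenius (1895) in its header --- and is invoked as a black box, together with Corollary \ref{generalized signature}, solely to read off the signatures of the explicit $K$-matrices constructed in Theorem \ref{K Wall}. Your argument supplies a complete and standard derivation of it: regularity (all leading principal minors $\mu_k(S)\neq 0$) is precisely the condition under which unpivoted symmetric elimination runs to completion, yielding $S=LDL^{T}$ with $L$ unit lower triangular; comparing leading $k\times k$ blocks gives Jacobi's identity $\mu_k(S)=d_1\cdots d_k$, hence $d_k=\mu_k(S)/\mu_{k-1}(S)$; and Sylvester's law of inertia transfers the inertia of $D$ to $S$, giving $\tau(S)=\sum_{k=1}^{n}\mathrm{sign}\bigl(\mu_k(S)/\mu_{k-1}(S)\bigr)$. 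Your caveat is also exactly right: since eigenvalues carry no canonical ordering, the per-index equality of signs in the lemma's statement can only be asserted as an equality of multisets (equivalently, as an inertia count), and that is all the paper ever uses --- only the summed signature formula enters the verification that the signatures of the matrices $K_A$ match the central charges of Theorem \ref{ calculate c}. One small point worth making explicit rather than implicit: regularity gives $\mu_n(S)=\det S\neq 0$, so $S$ has no zero eigenvalue and every quotient sign genuinely lies in $\{-1,1\}$, as the statement asserts. Your remark that Cauchy interlacing offers an alternative inductive route is accurate as well; it would additionally produce an ordered matching between eigenvalues of successive leading blocks, but that extra strength is unnecessary here.
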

\begin{cor}\label{generalized signature}
If $S$ is an invertible symmetric $n\times n$ matrix which is not regular then for sufficiently small $\epsilon\neq 0 $ the symmetric 
$n\times n$ matrix $S_{\epsilon}=S+\epsilon I_n$ is regular, with eigenvalues $\lambda_k(S_{\epsilon})=\lambda_k(S)+\epsilon\neq 0,$ 
and  \[\begin{split}
&sign(\lambda_k(S_\epsilon))=sign(\lambda_k(S))\in \{-1,1\},\\
&\tau(S)=\tau(S_{\epsilon})=\sum_{k=1}^{n}sign(\mu_k(S_{\epsilon})/\mu_{k-1}(S_{\epsilon}))\in \mathbb{Z}.
\end{split}\]

\end{cor}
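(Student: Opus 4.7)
The plan is to reduce the statement to Lemma \ref{signature} by a perturbation argument, producing an $\epsilon$ small enough that $S_\epsilon$ is simultaneously regular and sign-preserving in its eigenvalues.

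First I would deal with the spectrum of $S_\epsilon$. Since $S$ is symmetric, pick an orthonormal eigenbasis; then $S_\epsilon = S+\epsilon I_n$ has the same eigenvectors with eigenvalues $\lambda_k(S)+\epsilon$. Because $S$ is invertible, $\delta := \min_k |\lambda_k(S)| > 0$, so for any $|\epsilon|<\delta$ every $\lambda_k(S_\epsilon)$ is nonzero and $\mathrm{sign}(\lambda_k(S_\epsilon))=\mathrm{sign}(\lambda_k(S))$. In particular $S_\epsilon$ is invertible on this range.

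Next I would ensure regularity of $S_\epsilon$ for sufficiently small $\epsilon\neq 0$. Each leading principal minor $\mu_k(S_\epsilon)$ is a polynomial in $\epsilon$ of degree exactly $k$ (the top-order term $\epsilon^k$ comes from the diagonal), so $\mu_k(S_\epsilon)$ is not the zero polynomial and has at most $k$ real roots. Let $E$ be the finite union of real roots of $\mu_1,\dots,\mu_n$ as polynomials in $\epsilon$; then for $\epsilon\in(-\delta,\delta)\setminus(E\cup\{0\})$ the matrix $S_\epsilon$ is regular (all $\mu_k(S_\epsilon)\neq 0$) and has eigenvalues of the same signs as $S$. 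Such $\epsilon$ exist because $(-\delta,\delta)\setminus(E\cup\{0\})$ is nonempty.

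Finally I would apply Lemma \ref{signature} to $S_\epsilon$ for such an $\epsilon$. By the lemma,
\[
\tau(S_\epsilon)=\sum_{k=1}^{n}\mathrm{sign}(\mu_k(S_\epsilon)/\mu_{k-1}(S_\epsilon)),
\]
while on the other hand $\tau(S_\epsilon)=\sum_k \mathrm{sign}(\lambda_k(S_\epsilon))=\sum_k \mathrm{sign}(\lambda_k(S))=\tau(S)$ from the first step. Combining the two gives the claimed identity.

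The only subtle point is step two, ensuring that all $n$ leading principal minors are simultaneously nonzero for some arbitrarily small $\epsilon\neq 0$; this is handled by observing each is a nontrivial polynomial in $\epsilon$, so the bad set $E$ is finite and can be avoided within $(-\delta,\delta)$. Once that is established, the statement is essentially the lemma applied to the perturbed matrix together with continuity of eigenvalues.
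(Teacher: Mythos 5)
Your proposal is correct, and in fact the paper offers no proof of this corollary at all---it is stated as an unproved consequence of Lemma \ref{signature}---so your perturbation argument supplies exactly the standard reasoning the paper leaves implicit: shift the spectrum by $\epsilon I_n$ to preserve eigenvalue signs for $|\epsilon|<\min_k|\lambda_k(S)|$, note that each leading principal minor $\mu_k(S_\epsilon)=\det\bigl(S^{(k)}+\epsilon I_k\bigr)$ is a monic degree-$k$ polynomial in $\epsilon$ and hence vanishes at only finitely many $\epsilon$, then apply the lemma to $S_\epsilon$. One small remark: since your bad set $E$ is finite, your argument actually yields the stated ``for all sufficiently small $\epsilon\neq 0$'' form (take $0<|\epsilon|$ below both $\delta$ and the smallest nonzero modulus of a point of $E$), which is slightly stronger than the ``such $\epsilon$ exist'' phrasing you used and worth making explicit.
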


\begin{thm}\label{K Wall}

For each of the six $A,B,C,D$ families of abelian anyon models $(A,q)$, an even $K$-matrix $K_A$ can be constructed such that $\mathbb{Z}^{k+1}/K_A(\mathbb{Z}^{k+1})\cong A$, where Wall's algorithm terminates with $d_{k+1}=\pm 1$ and the signature of $K_A$ equals to the central charge of $A$.

\begin{enumerate}
    \item\label{existence of K, p odd}
    For abelian anyon models $A=A_{p^r}$ , $B_{p^r}$, the choices of $2m$ and $2n$ as above lead to even $K$-matrices $K_A$ that realize the abelian anyon models $(A,q)$.
    \item\label{existence of K, 2  even}
For abelian anyon models $A=A_{2^r}$ , $B_{2^r}$, $C_{2^r}$, $D_{2^r}$, the choices of $c=1,3,5,7$ as above lead to even $K$-matrices that realize the abelian anyon models $(A,q)$.
\item\label{existence of E and F}
For abelian anyon models $A=E_{2^r}$ , $F_{2^r}$,  even $K$-matrices $K_A$ can be directly constructed such that $\mathbb{Z}^{2}/K_{E_{2^r}}(\mathbb{Z}^{2})\cong E_{2^r}$ and 
$\mathbb{Z}^{4}/K_{F_{2^r}}(\mathbb{Z}^{4})\cong F_{2^r}$, the signature of $K_A$ equals the central charge of $A$ that realize the corresponding anyon models.

\end{enumerate}

\end{thm}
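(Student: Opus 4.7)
The strategy is to verify for each family that the prescribed even matrix has the required discriminant group, discriminant form, and signature.

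For the cyclic cases in parts (1) and (2), I would run the Wall algorithm and then check four items in turn. First, termination with $d_{k+1} = \pm 1$: the choice of $a_{i-1}$ as the closest even multiple of $d_i$ to $d_{i-1}$ forces the strict decrease $|d_{i+1}| < |d_i|$, and $\gcd(d_i, d_{i+1})$ is preserved under the recursion, equal to $\gcd(d_1, d_2) = 1$ from the opening equation $1 = nd_1 - p^r d_2$. Second, $K_A := W_A^{-1}$ is an even integer matrix: the determinant calculation $\det W_A = \pm p^{-r}$ in the excerpt gives $|\det K_A| = p^r$, and integrality together with evenness of the diagonal follows from the tridiagonal shape of $W_A$, the unit off-diagonal entries, and the fact that all $a_i$ are even, by a direct cofactor inspection. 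Third, $\mathbb{Z}^{k+1}/K_A(\mathbb{Z}^{k+1})$ is cyclic of order $p^r$: in the basis defining $W_A$, the first vector $x_0$ of $L_A^*$ satisfies $\langle x_0, x_0 \rangle = n/p^r$ and $\langle x_0, x_i \rangle \in \mathbb{Z}$ for $i \geq 1$, so the condition for $\sum a_i x_i$ to lie in $L_A$ reduces to $p^r \mid a_0$, yielding $L_A^*/L_A \cong \mathbb{Z}/p^r\mathbb{Z}$ generated by $x_0$. Fourth, the signature equals the central charge: I would apply Lemma \ref{signature} and Corollary \ref{generalized signature} to the sequence of successive principal minors of $W_A$, whose signs are computable from the $d_i$ together with the parity of $k$ (odd for odd $p$, even for $p=2$), matching the values in Theorem \ref{ calculate c}.

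For part (3) the construction is direct. For $E_{2^r}$, I take
\[ K_{E_{2^r}} = \begin{pmatrix} 0 & 2^r \\ 2^r & 0 \end{pmatrix}, \]
which is even (zero diagonal), has cokernel $\mathbb{Z}_{2^r} \times \mathbb{Z}_{2^r}$, signature $0$, and inverse $W_A$ whose associated discriminant form returns exactly $q(e_{(m,n)}) = mn/2^r$. For $F_{2^r}$, I exhibit an explicit $4 \times 4$ even matrix, built from the $D_4$ Gram matrix (which realizes $F_2$) by inserting suitable powers of $2$ on appropriate entries, and verify the three invariants by inspection; the central charge check uses Sylvester's criterion as above, and the discriminant form is read off from the inverse.

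The main obstacle will be matching the quadratic form $q$ itself, not merely the underlying abelian group. For odd $p$, the quadratic form on $\mathbb{Z}/p^r\mathbb{Z}$ is classified by a single Legendre symbol, and the choice of input to the Wall algorithm is made precisely to encode this invariant, so tracing the recursion back to $x_0$ confirms the correct value of $q$ on the generator. For $p = 2$, the discriminant form on $\mathbb{Z}/2^r\mathbb{Z}$ is a finer invariant classified by the residue of the input modulo $8$; verifying that the form recovered from the cokernel generator is exactly $c/2^{r+1}$ is the delicate point that must be handled case by case using the explicit recursion together with the parity of $k$.
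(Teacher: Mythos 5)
Your plan follows the same overall route as the paper's proof---Wall's algorithm for the cyclic families, parity bookkeeping for evenness of $K_A$, identification of the cokernel together with the form on its generator, Sylvester minors (Lem.~\ref{signature}, Cor.~\ref{generalized signature}) for the signature, and the identical matrix $\begin{pmatrix}0&2^r\\2^r&0\end{pmatrix}$ for $E_{2^r}$---and your cokernel argument is in fact cleaner than the paper's: since the rows of $W_A$ with index $i\geq 1$ are integral, the dual basis vectors $x_i$, $i\geq 1$, already lie in $L_A$, so $L_A^*/L_A$ is generated by $[x_0]$ of order $p^r$, whereas the paper computes the adjugate $K_A^*=\pm p^r W_A$ and passes through the Smith normal form; your version also reads off $q$ on the generator at the same stroke. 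The genuine gap is the signature step. You assert that the signs of the successive minors are ``computable from the $d_i$ together with the parity of $k$'' and will match Thm.~\ref{ calculate c}, but the sign pattern of the $d_i$ depends on the arithmetic of the particular run, and a generic run of the algorithm need not produce the desired signature---the paper's own footnote to the algorithm warns that it is not always true that the output has the right central charge or discriminant form. The paper closes this hole by brute force: it fixes explicit inputs ($m=2$, i.e.\ $c=4$, for $A_{p^r}$; $m=1$ or $m=\frac{p-1}{2}$ for $B_{p^r}$; $c=1,3,5,7$ for the prime-$2$ families, with subcases on $r$ modulo $2$, $3$, or $4$), writes out each resulting small tridiagonal $W_A$ explicitly, and computes each signature individually via the minor quotients. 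You would either have to reproduce this case analysis or invoke Milgram's formula (an even lattice whose discriminant form is the prescribed $(A,q)$ automatically has signature congruent to the central charge mod $8$), and your proposal does neither.

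A second concrete gap: your blanket claim for parts (1) and (2) fails for the small exceptional cases, because the algorithm needs an admissible even $c$ with $0<c<p^r$. For $A_3$ the only residue with $\left(\frac{2m}{3}\right)=1$ is $m=2$, giving $c=4>3$; likewise $c=7$ exceeds $2^r$ for $B_2$ and $B_{2^2}$, and $c=5$ exceeds $2^2$ for $C_{2^2}$. The paper does not use Wall's algorithm for these at all: it takes the complement lattices $K'(3,2)^{\perp}$, $(2)^{\perp}$, $(2^2)^{\perp}$, $K_e(2)^{\perp}$ of Thm.~\ref{exstence KA}, via Lem.~\ref{conjugate lattice}. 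Two smaller issues: evenness of $(K_A)_{1,1}=\pm p^r d_1$ requires $d_1$ itself to be chosen even (by a shift of $p^r$, as the paper states explicitly); ``all $a_i$ even'' alone does not deliver this, since $d_1$ is fixed before the recursion starts, and with $d_1$ odd the diagonal genuinely fails to be even. Finally, your $F_{2^r}$ construction (``insert suitable powers of $2$ into the $D_4$ Gram matrix'') is only a gesture; the paper instead inverts an explicit $4\times 4$ matrix whose upper-left corner block is $2^{-r}\begin{pmatrix}2&1\\1&2\end{pmatrix}$ with tail diagonal $2a,2b$ where $a=\frac{2^r-(-1)^r}{3}$, $b=(-1)^{r-1}$, and verifies the cokernel and the signature ($4$ for $r$ odd, $0$ for $r$ even) of the resulting $K_{F_{2^r}}$, so you would need to actually exhibit your matrix and check these invariants.
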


\begin{proof} 
To prove Part \ref{existence of K, p odd} of Thm. \ref{K Wall}, suppose  Wall's algorithm terminates with $d_{k+1}=s$ where $s=\pm 1$, then $det(W_A)=s p^{-r}, det(K_A)=s p^r$ and $k$ is odd.

First, we explain that $K_A$ is an even matrix. As $K_A=W_{A}^{-1}=s p^rW_{A}^{*}$,  all elements in $W_{A}^{*}$ are   integers or at most a denominator of $p^r$ coming from the element $\frac{c}{p^r}$, thus $K_A$ is an integral matrix. 

Set $A_t$ be the submatrix of $W_A$ by deleting the last k+1-t rows and the last k+1-t columns, $f_1(t)$ is its determinant, $f(t)=s p^{r}f_1(t)$. Set   $D_{k+1-t}$ be the submatrix of $W_A$ by deleting the first $t$ rows and the first $t$ columns, $g(k+1-t)=d_t$ is its determinant.

$(K_A)_{1,1}=s p^r(W_A^{*})_{1,1}=s p^{r}d_1$;

$(K_A)_{k+1,k+1}=s p^{r}(W_A^{*})_{k+1,k+1}=s p^{r}f_1(k)=f(k)$;

 $2\leq t \leq k,$
 \[ (K_A)_{t, t}=s p^{r}(W_A^{*})_{t, t}=s p^{r}f_1(t-1)g(k+1-t)=d_{t}f(t-1)\]
 As $f_1(t+1)=a_{t}f_1(t)-f_1(t-1)$, so $f(t+1)=a_{t}f(t)-f(t-1)$ which means $f(t+1)$ has the same parity as $f(t-1)$, as $f(1)=s c$ is even, $f(2)=s p^{r}(\frac{c a_1}{p^r}-1)=
 s(c a_1-p^r)$ is odd, so \[f(t)=\begin{cases} \text{even} &t \text{ odd } \\ \text{odd}&t \text{ even}
\end{cases}\]

Besides, \[d_t=\begin{cases} \text{even} &t \text{ odd } \\ \text{odd}&t \text{ even}
\end{cases}\]

So $(K_A)_{t, t}=d_{t}f(t-1)$ is always even.
As $d_1$ is even, so $(K_A)_{1,1}$ is even; as $k$ is odd, so 
$(K_A)_{k+1,k+1}=f(k)$ is even.
Thus all diagonal elements in $K_A$ are all even which means $K_A$ is an even matrix.
 
Next, we prove $\mathbb{Z}_{k+1}/K_A\mathbb{Z}_{k+1}\cong \mathbb{Z}_{p^r}.$  $K_AK_A^{*}=det(K_A)E_{k+1}=s p^rE_{k+1},$ so \[K_A^{*}=s p^rK_A^{-1}=s p^rW_A=\left(
\begin{array}{cccccc}
s c & s p^r &0& \cdots & 0 &0\\
s p^r &s p^ra_1 & s p^r & \cdots & 0 &0 \\
0 & s p^r & s p^r a_2& \cdots& 0 &0 \\
\vdots & \vdots & \vdots & \ddots & 0 & 0 \\
0 & 0 & 0 &\cdots & sp^ra_k&s p^r \\
0 & 0 & 0 & \cdots & s p^r & s p^ra_{k+1}\\
\end{array}
\right)\]
 
We consider the invariant factor of $K_A$, set $\alpha_t$ be the $t$ th
invariant factor of $K_A$, $\alpha_t=\frac{d_t(K_A)}{d_{t-1}(K_A)}$.
where $d_t$ called $t$-th determinant divisor equals the greatest common divisor of all $t\times t$ minors of the matrix $K_A$ and $d_0:=1.$

$(K_A^{*})_{1,1}=s c, (K_A^{*})_{1, 2}=s p^r$ which means $d_{k} \,|\,(s c, s p^r)=1$, so $d_k=1$  which means $\alpha_t=1$ for $1\leq t\leq k$ and $\alpha_{k+1}=p^r$. Then the Smith normal form of $K_A$ verifies $\mathbb{Z}_{k+1}/K_A\mathbb{Z}_{k+1}\cong \mathbb{Z}_{p^r}.$ 

Finally, we show the signature of $K_A$ equals the central charge of A by giving explicit formula of $K_A$ and calculate its signature according to  Lem\ref{signature} and Cor\ref{generalized signature}, comparing with central charge of  Thm\ref{ calculate c}.

For $A_3,$ we take $K_{A_3}=K'(3,2)^{\perp}$, see Thm\ref{exstence KA}.

For $A_{p^r}$ except $A_3$, we can always choose $m=2$, then $c=2 m=4.$ Using Wall's algorithm, 

When $r$ is even, 
\[K_{A_{p^r}}=\begin{pmatrix}\frac{4}{p^r}&1&0&0\\
1&\frac{p^r-1}{4}&1&0\\
0&1&-4&1\\
0&0&1&-2 \\
\end{pmatrix}^{-1}\]

$K_{A_{p^r}}$ has signature 0 which is consistent with central charge 0(mod \, 8).

When $r$ is odd and $p\equiv1(mod\,8)$ ,
\[K_{A_{p^r}}=\begin{pmatrix}\frac{4}{p^r}&1&0&0\\
1&\frac{p^r-1}{4}&1&0\\
0&1&-4&1\\
0&0&1&-2 \\
\end{pmatrix}^{-1}\]

$K_{A_{p^r}}$ has signature 0 which is consistent with central charge 0(mod \, 8).

When $r$ is odd and $p\equiv-3(mod\,8)$ ,
\[K_{A_{p^r}}=\begin{pmatrix}\frac{4}{p^r}&1&0&0\\
1&\frac{p^r+3}{4}&1&0\\
0&1&2&1\\
0&0&1&2 \\
\end{pmatrix}^{-1}\]

$K_{A_{p^r}}$ has signature 4 which is consistent with central charge 4(mod \, 8) and it is positive definite.

When $r$ is odd and $p\equiv-1(mod\,8)$ ,
\[K_{A_{p^r}}=\begin{pmatrix}\frac{4}{p^r}&1\\
1&\frac{p^r+1}{4}\\
\end{pmatrix}^{-1}\]

$K_{A_{p^r}}$ has signature 2 which is consistent with central charge 2(mod \, 8) and it is positive definite.

When $r$ is odd and $p\equiv3(mod\,8)$ ,
\[K_{A_{p^r}}=\begin{pmatrix}\frac{4}{p^r}&1&0&0\\
1&\frac{p^r-3}{4}&1&0\\
0&1&-2&1\\
0&0&1&-2 \\
\end{pmatrix}^{-1}\]

$K_{A_{p^r}}$ has signature -2 which is consistent with central charge 6(mod \, 8).

For $B_{p^r}, r$ even and  $p\equiv-3(mod\,8)$ we can  choose $m=1$, then $c=2 m=2.$ 

\[K_{B_{p^r}}=\begin{pmatrix}\frac{2}{p^r}&1&0&0\\
1&\frac{p^r-1}{2}&1&0\\
0&1&-2&1\\
0&0&1&-2 \\
\end{pmatrix}^{-1}\]

$K_{B_{p^r}}$ has signature 0 which is consistent with central charge 0(mod \, 8).

For $B_{p^r}, r$ even and  $p\equiv-1(mod\,8)$ we can  choose $m=\frac{p-1}{2}$, then $c=2 m=p-1.$ 

\[K_{B_{p^r}}=\begin{pmatrix}\frac{p-1}{p^r}&1&0&0\\
1&\frac{p^r-1}{p-1}&1&0\\
0&1&-2&1\\
0&0&1&-2 \\
\end{pmatrix}^{-1}\]

$K_{B_{p^r}}$ has signature 0 which is consistent with central charge 0(mod \, 8).

For $B_{p^r}, r$ even and  $p\equiv3(mod\,8)$ we can  choose $m=1$, then $c=2 m=2.$ 

\[K_{B_{p^r}}=\begin{pmatrix}\frac{2}{p^r}&1&0&0\\
1&\frac{p^r-1}{2}&1&0\\
0&1&-2&1\\
0&0&1&-2 \\
\end{pmatrix}^{-1}\]

$K_{B_{p^r}}$ has signature 0 which is consistent with central charge 0(mod \, 8).

For $B_{p^r}, r$ odd and  $p\equiv-3(mod\,8)$ we can  choose $m=1$, then $c=2 m=2.$ 

\[K_{B_{p^r}}=\begin{pmatrix}\frac{2}{p^r}&1&0&0\\
1&\frac{p^r-1}{2}&1&0\\
0&1&-2&1\\
0&0&1&-2 \\
\end{pmatrix}^{-1}\]

$K_{B_{p^r}}$ has signature 0 which is consistent with central charge 0(mod \, 8).

For $B_{p^r}, r$ odd and  $p\equiv-1(mod\,8)$ we can  choose $m=\frac{p-1}{2}$, then $c=2 m=p-1.$ 

\[K_{B_{p^r}}={\begin{pmatrix} \frac{p-1}{p^r} & 1 &0&\cdots&0&0\\ 1 &  \frac{p^r-1}{p-1}+1&1&\cdots&0&0\\
 0&1&2&\cdots&1&0\\\vdots&\vdots&\vdots&\ddots&\vdots&\vdots\\
 0&0&0&\cdots&2&1\\0&0&0&\cdots&1&2 \end{pmatrix}}^{-1}\]

$K_{B_{p^r}}$ is $(p-1)\times (p-1)$ has signature $p-1$ which is consistent with central charge 6(mod \, 8).

For $B_{p^r}, r$ odd and  $p\equiv3(mod\,8)$ we can  choose $m=1$, then $c=2 m=2.$ 

\[K_{B_{p^r}}=\begin{pmatrix}\frac{2}{p^r}&1\\
1&\frac{p^r+1}{2}\\
\end{pmatrix}^{-1}\]

$K_{B_{p^r}}$ has signature 2 which is consistent with central charge 2(mod \, 8) and it is positive definite.

The proof of Part \ref{existence of K, 2 even} of Thm. \ref{K Wall} essentially follows the same steps above by observing that in this case $k$ is even: 
\[f(t)=\begin{cases} \text{odd} &t \text{ odd } \\ \text{even}&t \text{ even}
\end{cases}\]

\[d_t=\begin{cases} \text{odd} &t \text{ odd } \\ \text{even}&t \text{ even}
\end{cases}\]
and some  calculations  may be different but do not affect the  results. 

Finally, we show the signature of $K_A$ equals the central charge of A by giving explicit formula of $K_A$ and calculate its signature according to  Lem. \ref{signature} and Cor. \ref{generalized signature}, comparing with central charge of Thm. \ref{ calculate c}.

For $A_{2^r}$, $c=1,$ using Wall's algorithm, 

\[K_{A_{2^r}}=\begin{pmatrix}\frac{1}{2^r}&1&0\\
1&2^r&1\\
0&1&-2\\
\end{pmatrix}^{-1}\]

$K_{A_{2^r}}$ has signature 1 which is consistent with central charge 1(mod \, 8).

For $D_{2^r}, r$ even, $c=3,$ 

\[K_{D_{2^r}}=\begin{pmatrix}\frac{3}{2^r}&1&0\\
1&\frac{2^r+2}{3}&1\\
0&1&2\\
\end{pmatrix}^{-1}\]

$K_{D_{2^r}}$ has signature 3 which is consistent with central charge 3(mod\, 8) and it is positive definite.

For $D_{2^r}, r$ odd, $c=3,$ 

\[K_{D_{2^r}}=\begin{pmatrix}\frac{3}{2^r}&1&0&0&0\\
1&\frac{2^r-2}{3}&1&0&0\\
0&1&-2&1&0\\
0&0&1&-2&1\\
0&0&0&1&-2
\end{pmatrix}^{-1}\]

$K_{D_{2^r}}$ has signature -1 which is consistent with central charge 7(mod\, 8).

For $C_{2^2},$ we take $K_{C_{2^2}}=K_e(2)^{\perp}$, see Thm\ref{exstence KA}.

For $C_{2^r}, r\equiv1(mod\,4)$ , $c=5,$ 

\[K_{C_{2^r}}=\begin{pmatrix}\frac{5}{2^r}&1&0\\
1&\frac{2^r-2}{5}&1\\
0&1&-2\\
\end{pmatrix}^{-1}\]

$K_{C_{2^r}}$ has signature 1 which is consistent with central charge 1(mod\, 8).

For $C_{2^r}, r\equiv3(mod\,4)$ , $c=5,$ 

\[K_{C_{2^r}}=\begin{pmatrix}\frac{5}{2^r}&1&0&0&0\\
1&\frac{2^r+2}{3}&1&0&0\\
0&1&2&1&0\\
0&0&1&-2&1\\
0&0&0&1&-2
\end{pmatrix}^{-1}\]

$K_{C_{2^r}}$ has signature 1 which is consistent with central charge 1(mod\, 8).

For $C_{2^r}, r\equiv2(mod\,4),c>2$ , $c=5,$ 

\[K_{C_{2^r}}=\begin{pmatrix}\frac{5}{2^r}&1&0&0&0&0&0\\
1&\frac{2^r-4}{3}&1&0&0&0&0\\
0&1&-2&1&0&0&0\\
0&0&1&-2&1&0&0\\
0&0&0&1&-2&1&0\\
0&0&0&0&1&-2&1\\
0&0&0&0&0&1&-2\\

\end{pmatrix}^{-1}\]

$K_{C_{2^r}}$ has signature -3 which is consistent with central charge 5(mod\, 8).

For $C_{2^r}, r\equiv0(mod\,4)$ , $c=5,$ 

\[K_{C_{2^r}}=\begin{pmatrix}\frac{5}{2^r}&1&0&0&0\\
1&\frac{2^r+4}{3}&1&0&0\\
0&1&2&1&0\\
0&0&1&2&1\\
0&0&0&1&2
\end{pmatrix}^{-1}\]

$K_{C_{2^r}}$ has signature 5 which is consistent with central charge 5(mod\, 8).

For $B_{2},$ we take $K_{B_{2}}=(2)^{\perp}$, see Thm\ref{exstence KA}.

For $B_{2^2},$ we take $K_{B_{2^2}}=(2^2)^{\perp}$, see Thm\ref{exstence KA}.
 
 When $r\geq 3,$

For $B_{2^r}, r\equiv1(mod\,3)$ , $c=7,$ 

\[K_{B_{2^r}}=\begin{pmatrix}\frac{7}{2^r}&1&0&0&0&0&0\\
1&\frac{2^r+6}{3}&1&0&0&0&0\\
0&1&2&1&0&0&0\\
0&0&1&2&1&0&0\\
0&0&0&1&2&1&0\\
0&0&0&0&1&2&1\\
0&0&0&0&0&1&2\\

\end{pmatrix}^{-1}\]

$K_{B_{2^r}}$ has signature 7 which is consistent with central charge 7(mod\, 8).

For $B_{2^r}, r\equiv2(mod\,3)$ , $c=7,$ 

\[K_{B_{2^r}}=\begin{pmatrix}\frac{7}{2^r}&1&0&0&0\\
1&\frac{2^r-4}{3}&1&0&0\\
0&1&-2&1&0\\
0&0&1&-4&1\\
0&0&0&1&-2
\end{pmatrix}^{-1}\]

$K_{B_{2^r}}$ has signature -1 which is consistent with central charge 7(mod\, 8).

For $B_{2^r}, r\equiv0(mod\,3)$ , $c=7,$ 

\[K_{B_{2^r}}=\begin{pmatrix}\frac{7}{2^r}&1&0&0&0&0&0\\
1&\frac{2^r+6}{3}&1&0&0&0&0\\
0&1&2&1&0&0&0\\
0&0&1&2&1&0&0\\
0&0&0&1&2&1&0\\
0&0&0&0&1&2&1\\
0&0&0&0&0&1&2\\

\end{pmatrix}^{-1}\]

$K_{B_{2^r}}$ has signature 7 which is consistent with central charge 7(mod\, 8).

To prove Part \ref{existence of E and F} of Thm. \ref{K Wall},
for $E_{2^r}$, set $K_{E_{2^r}}=\begin{pmatrix}0&2^{-r}\\
2^{-r}&0\end{pmatrix}^{-1}=\begin{pmatrix}0&2^r\\
2^r&0\end{pmatrix}$, then it is easy to verify that 
$\mathbb{Z}^{2}/K_{E_{2^r}}(\mathbb{Z}^{2})\cong E_{2^r}$.
 $K_{E_{2^r}}$ has signature 0 which is consistent with central charge 0(mod\, 8).
 
For $F_{2^r}$, set \[K_{F_{2^r}}=\begin{pmatrix}2^{1-r}&2^{-r}&0&0\\
2^{-r}&2^{1-r}&1&0\\
0&1&2 a&1\\
0&0&1&2 b\\
\end{pmatrix}^{-1}\]
where $a=\frac{2^r-(-1)^r}{3}, b=(-1)^{r-1}.$
   
When $r$ is odd, 
   
\[K_{F_{2^r}}=\begin{pmatrix}\frac{2^{1+r}(1+2^r)}{3}&
\frac{-2^{r}(1+2^{2+r})}{3}&2^{1+r}&-2^r\\
\frac{-2^{r}(1+2^{2+r})}{3}&\frac{2^{1+r}(1+2^{2+r})}{3}&-2^{2+r}&2^{1+r}\\
2^{1+r}&-2^{2+r}&6&-3\\
 - 2^{r}&2^{1+r}&-3&2 \\
\end{pmatrix}\] 
  
When $r$ is even,
  
\[K_{F_{2^r}}=\begin{pmatrix}\frac{2^{1+r}(-1+2^r)}{3}&
\frac{2^{r}(-1+2^{2+r})}{3}&-2^{1+r}&-2^r\\
\frac{2^{r}(-1+2^{2+r})}{3}&\frac{-2^{1+r}(-1+2^{2+r})}{3}&2^{2+r}&2^{1+r}\\
  -2^{1+r}&2^{2+r}&-6&-3\\
 - 2^{r}&2^{1+r}&-3&-2 \\
\end{pmatrix}\] 
  
It is easy to verify that $\mathbb{Z}^{4}/K_{F_{2^r}}(\mathbb{Z}^{4})\cong F_{2^r}$,just consider the third invariant factor of $K_{F_{2^r}}$ and then use Smith normal form as in Thm\ref{existence of K, p odd}. When $r$ is odd, $K_{F_{2^r}}$ has signature 4 which is consistent with central charge 4(mod\, 8); When $r$ is even, $K_{F_{2^r}}$ has signature 0 which is consistent with central charge 0(mod\, 8).

\end{proof}

\section{Realization via chiral conformal field theories}\label{conlattice}

A chiral conformal field theory ($\chi$CFT) is the chiral part of a full conformal field theory, and mathematically is either a vertex operator algebra (VOA) or a local conformal net (LCN).  In this paper, by a $\chi$CFT we will mean a VOA.  In this section, we will investigate two different approaches to realize abelian anyon models by VOAs with small central charges explicitly: the complement lattice approach and the Wall algorithm approach.  The complement lattice approach was used in \cite{EG20} to realize all abelian anyons models by lattice VOAs.  We will make the complement lattice approach more algorithmic.  The Wall algorithm is not as powerful as the complement lattice approach, but constructive and much more elementary.  For applications to the bulk-edge correspondence in abelian fractional quantum Hall states, we are interested in explicit realizations of small central charges.

\subsection{Lattice VOAs}

Given an even $+$-definite lattice $\Lambda$ of rank=$c$, there associates a VOA $\mcV_\Lambda$ corresponding physically to $c$ free bosons compactified so that their momenta live in the lattice $\Lambda$.  The vector space $V_\Lambda$ of $\mcV_\Lambda$ is $V_\Lambda=V^{\otimes c}\otimes \mbbC[\Lambda]$, where $V=\mbbC[x_1,x_2,\cdots]$ is the polynomial algebra $\mbbC[x_i]$ of $x_i$ with deg$(x_i)=i$, and $\mbbC[\Lambda]$ the group algebra of $\Lambda$.

An abelian anyon model $(A,q)$ is said to be realized by a lattice VOA $\mcV_{\Lambda}$ or simply lattice $\Lambda$ for some even $+$-definite lattice $\Lambda$ if the representation category of the associated VOA is $(A,q)$.  The central charges $c_\mcV$ and conformal weights $\{h_a\}$ of a nontrivial lattice VOA $\mcV_{\Lambda}$ are positive rational numbers that lifts the central charge $c_A$ of the abelian anyon model and the exponents of the topological twists $\theta_a=e^{2\pi h_a}$.  The genus of a VOA is the pair $((A,q),c_\mcV)$.  

It is known that the representation categories of such lattice VOAs are the abelian anyon models associated to the lattice as in Sec. \ref{conkmatrix}.  Therefore, the reconstruction of lattice VOAs is reduced to the construction of even $+$-definite integral lattices. 

\subsubsection{Minimal genus realization}

Given an anyon model $\mcB$, a candidate genus is a pair $(\mcB, c)$ such that $c$ is a positive rational number $c=c_\mcB $ mod $8$.  The minimal genus of an anyon model $\mcB$ is the genus of a VOA with the smallest realizable central $c$.  Such minimal realizations are not unique as the trivial anyon model shows.  The non-uniqueness of minimal realization is a general phenomenon.

For the eight families of abelian anyon models, if the central charge $0$ mod $8$ is lifted to $8$ in Table \ref{ calculate c} and the rest is taken as in the table, then they all can be realized.  A proof follows from Corollary 1.10. 2 of \cite{Nik1}.  A quadratic form of an abelian anyon models $(A,q)$ can be realized by an even $+$-definite lattice $L$ of rank=$l$ if the central charge of $q$ equals to the rank of $l$ mod $8$, and the rank of $L$ $>1$ for the $A,B,C,D$ families and $>2$ for the $E,F$ families.
Therefore the minimal genus realizations for all eight families do exist, though no known explicit realizations are known to the best knowledge of the authors.

The representation category of a lattice VOA for a unimodular or self-dual even $+$-definite lattice is trivial, i.e. $\mcV ec$.  Therefore, if a lattice $\Lambda$ realizes an abelian anyon model $(A,q)$, then the direct sum $\Lambda \oplus L$ realizes $(A,q)$ as well for any unimodular even $+$-definite lattice $L$. Taking stability of edge theory into consideration to rule out some trivial constructions, we define extremal VOAs inside a given genus as in \cite{TW17}.  Given a genus $(\mcB,c)$, a VOA that realizes this genus is extremal if $rc/4+r(r-1)/2 -6\sum_a h_a$ is the smallest positive integer, where $r$ is the rank of $\mcB$ \cite{TW17}.

\subsection{Complement lattice approach}

Anyon models come in conjugate pairs that the topological twists are complex conjugate of each other.  It is often easier to realize one of the pair whose central charge is smaller.  For example for the Semion and anti-Semion pair, the Semion is realized by $SU(2)_1$ by a $K$-matrix $K_A=(2)$, while the anti-Semion with $c=7$ mod $8$ is harder.  It can be realized by $(E_7)_1$ with a $K$-matrix $E_7$.  So one strategy is to realize one of each pair by constructing an explicit lattice, and then taking its complement in a self-dual even lattice to realize the other in the pair as in \cite{EG20}.  We will carry this strategy first in this section in a more detailed elementary and algorithmic way.  The drawback of the complement lattice approach is that the resulting lattice has a relatively big central charge, so does not realize the minimal genus in general.

\subsubsection{Complement lemma}

Let $L$ be an even $+$-definite symmetric lattice of rank=$c$ with Gram matrix $K_L$.  Suppose $c'$ is a positive integer such that $c'\geq c$ and $c+c'=8l$ for some integer $l$.  Then there exists a primitive embedding of $L$ into a self-dual even $+$-definite lattice $\Lambda$ by Corollary 1.12.3 in \cite{Nik1}.  Let $L^{\perp}$ be the complement lattice of $L\subset \Lambda$, then $\Lambda \cong L\oplus L^{\perp}$.  Therefore, $L^{\perp}$ is an even $+$-definite lattice with associated abelian abyon model $(A,-q)$.  But it is not known how to construct the primitive embedding $L\subset \Lambda$ for the minimal $c'$ in general.  So instead in this section, we will construct a much larger $c'$ algorithmically by going through the following lemma.

\begin{lem}\label{conjugate lattice}

\begin{enumerate}

\item If an abelian anyon model $(A,q)$ is realized by an even $+$-positive lattice $L$, then the diagonal embedding of $L$ into $L^{\oplus 8}$ can be extended to a primitive embedding into a unimodular even $+$-positive lattice $UL^{\oplus 8}$. 

\item For any integer $m>0$ and lattice $\Lambda$, the lattice $\Lambda^{\oplus 8m}$ can be self glued to a self-dual lattice $U\Lambda^{\oplus 8m}$.
Furthermore, for any integer $0<l< m$, the lattice $\Lambda^{\oplus l}$ has a primitive embedding in $U\Lambda^{\oplus 8m}$.

\item Let $L$ be an even $+$-definite lattice with Gram matrix $K_A$ and central charge $c$ that realizes an anyon model $(A,q)$. Then there exists an even self-dual $+$-definite lattice  $\Lambda$ obtained from self-gluing $L^{\oplus 8}$ with $L$ as a primitive sublattice.  
Furthermore, the complement lattice $L^{\perp}=\{x\in \Lambda: x\cdot L=0\}$ of $L$ in $\Lambda$ is an even $+$-definite lattice with central charge $7c$ that realizes the abelian anyon model $(A,-q)$.  

The Gram matrix of $L^{\perp}$ will be denoted by $K_A^{\perp}$. 

\end{enumerate}

\end{lem}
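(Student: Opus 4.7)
The plan rests on two pillars: Nikulin's theory of even overlattices of an even lattice via isotropic subgroups of the discriminant form \cite{Nik1}, and the fact --- recorded as Item (3) of the preceding corollary --- that eight copies of any abelian anyon model admit a Lagrangian (totally isotropic subgroup of maximal order) in the discriminant form.

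For Part (1), I identify the discriminant form of $L^{\oplus 8}$ as $(A,q)^{\oplus 8}$ on $A^{\oplus 8}$. Even unimodular overlattices of $L^{\oplus 8}$ are in bijection with Lagrangian subgroups $H \subset A^{\oplus 8}$ of order $|A|^4$ satisfying $q^{\oplus 8}|_H \equiv 0 \pmod{2\mathbb{Z}}$; such an $H$ exists by the corollary, producing an even self-dual positive-definite lattice $UL^{\oplus 8}$. For the composite $L \hookrightarrow L^{\oplus 8} \hookrightarrow UL^{\oplus 8}$ (diagonal then overlattice) to be primitive, I must verify $H \cap \Delta(A) = \{0\}$, where $\Delta(A) = \{(a,\ldots,a) : a \in A\} \subset A^{\oplus 8}$. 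I would produce such an $H$ via a generic-position argument, invoking also Nikulin's Corollary 1.12.3 on primitive embeddings into unimodular lattices; in the boundary cases when $\Delta(A)$ happens to be itself isotropic and therefore conceivably contained in every candidate Lagrangian, a Witt-trivial modification such as replacing $L^{\oplus 8}$ by $L^{\oplus 8} \oplus E_8$ displaces the gluing away from the diagonal without altering the associated anyon model.

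Part (2) is the same construction iterated: a Lagrangian in $(A_\Lambda, q_\Lambda)^{\oplus 8m}$ arises as the $m$-fold direct sum of Lagrangians in $(A_\Lambda, q_\Lambda)^{\oplus 8}$, producing the self-dual even overlattice $U\Lambda^{\oplus 8m}$. For the primitive embedding of $\Lambda^{\oplus l}$ with $0 < l < m$, I would arrange the Lagrangian $H$ to have trivial intersection with the image of $A_\Lambda^{\oplus l}$ sitting in the first $l$ coordinate blocks of $A_\Lambda^{\oplus 8m}$; since $l < m$, there remain at least $8(m-l) \ge 8$ free blocks whose contribution to the gluing absorbs all the needed rank, so $H$ can be chosen supported away from the first $l$ copies. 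Part (3) then reduces to Nikulin's Proposition 1.6.1: for a primitive embedding of an even lattice $L$ into an even unimodular positive-definite lattice $\Lambda$, the orthogonal complement $L^\perp$ is itself even and positive-definite, with discriminant form canonically anti-isomorphic to that of $L$, so $(A_{L^\perp}, q_{L^\perp}) \cong (A, -q)$; the rank computation $\operatorname{rank}(L^\perp) = 8c - c = 7c$ identifies $L^\perp$ as a realization of the conjugate anyon model $(A,-q)$ at central charge $7c$.

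The main obstacle is the disjointness condition in Part (1) (and its analog in Part (2)): once a Lagrangian $H$ can be chosen with $H \cap \Delta(A) = 0$, everything else follows from standard Nikulin theory combined with the existence of the Lagrangian itself. Establishing this disjointness in full generality may require a case-by-case inspection based on the classification from Theorem \ref{classification_abelian}, or alternatively a uniform counting argument exploiting the fact that $|\Delta(A)| = |A|$ while the space of admissible Lagrangians has cardinality growing much faster in $|A|$, so generic Lagrangians miss $\Delta(A)$ --- and the anomalous cases can be fixed by the Witt-trivial enlargement trick above, in the spirit of the constructions in \cite{EG20}.
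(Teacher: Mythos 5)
Your skeleton---even self-dual overlattices of $L^{\oplus 8}$ classified by Lagrangian subgroups $H$ of the discriminant form $(A,q)^{\oplus 8}$, existence of such an $H$ from Witt-triviality of eight copies, and Nikulin's complement formula $q_{L^\perp}\cong -q_L$---is exactly the route the paper gestures at (the paper itself offers little more than the remark that eight copies are a Drinfeld center, plus the analogy of $E_7$ arising from $A_1^{\oplus 8}\subset E_8$). But the primitivity analysis, which is the only part you add beyond that sketch, contains a genuine error. The disjointness $H\cap \Delta(A)=0$ that you flag as ``the main obstacle'' is not merely unproved: it is \emph{impossible} in the simplest case. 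For the semion, admissible Lagrangians in $(\mathbb{Z}_2, q=\tfrac12)^{\oplus 8}$ are precisely the doubly-even self-dual binary codes of length $8$, of which there is only one, the extended Hamming code; and \emph{every} self-dual binary code contains the all-ones word $\mathbf{1}$ (each codeword $c$ satisfies $c\cdot c=0$, hence has even weight, hence $\mathbf{1}\in C^{\perp}=C$), and $\mathbf{1}$ is exactly the diagonal class $\Delta(A)$. So no generic-position or counting argument can succeed, and your fallback of replacing $L^{\oplus 8}$ by $L^{\oplus 8}\oplus E_8$ is vacuous: adjoining a unimodular summand changes neither the discriminant form, nor the set of candidate Lagrangians, nor the class $\Delta(A)$. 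Consistently, in the paper's own guiding example the diagonal copy of $A_1$ in $E_8$ is \emph{imprimitive} (the all-halves glue vector halves its generator).

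The deeper structural issue is that you are proving primitivity of the wrong sublattice. The diagonal copy of $L$ in $L^{\oplus 8}$ carries the bilinear form rescaled by $8$, so its discriminant form is that of $L(8)$, not $(A,q)$; even if it were primitive, Nikulin's Proposition 1.6.1 would give a complement with discriminant form $-q_{L(8)}$, of order $8^{c}|A|$, and Part (3) would fail. What Part (3) actually requires is primitivity of a \emph{single factor}, i.e.\ $H\cap(A\oplus 0^{\oplus 7})=0$, and your write-up silently switches to that embedding in Part (3) without securing it (for the semion it holds because the Hamming code has minimum weight $4$; in general this is precisely what the ``complement lattice lemma'' of \cite{EG20}, which the paper names as a main tool, supplies). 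Relatedly, in Part (2) your phrase ``$H$ can be chosen supported away from the first $l$ copies'' is too strong and in fact self-defeating: a Lagrangian whose support misses those blocks would leave $\Lambda^{\oplus l}$ unglued, and the resulting overlattice could not be self-dual unless $\Lambda$ already were. Trivial intersection, not disjoint support, is the correct condition, and establishing it---a Lagrangian in $(A,q)^{\oplus 8}$ meeting a fixed factor trivially---is the one lemma your proposal needed and never proves.
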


From the structure of Witt group, any $8$ copies of an abelian anyon model is a Drinfeld center, hence $L^{\oplus 8}$ can be self-glued to a self-dual lattice.  This is analogous to the construction the $E_7$ lattice from $8$ copies of $A_1$.

\subsubsection{Construction of even $+$-definite lattices for abelian anyon models}

Among the eight families of abelian anyon models, $B_{p^r}$ for $p$ odd and $A_{2^r}$ are easy to be realized as they are simply the WZW models for $SU(p^r)$ and lattice VOAs with Gram matrix $K_A=(2^r)$.  The most difficult ones are the doubles with $c=0$ mod $8$.

To construct other even $+$-definite $K$-matrices, we start with some lemmas.

\begin{lem}\label{ direct construct K}
Suppose $p\equiv-1$ mod $4$, $r\geq1$,  then for the two families with finite abelian group $\mbbZ_{p^r}$ and central charge $c=p^r-1$, an even $+$-definite matrix $K_A$ can be constructed such that $\mathbb{Z}^c/K_A(\mathbb{Z}^c)\cong A.$ 
This $K_A$ gives a $K$-matrix for $B_{p^r}$, and the complement $K_A^{\perp}$ provides a Gram matrix for $A_{p^r}$.
\begin{proof}
The Cartan matrix of $SU(p^r-1)$ can be directly used to realize anyon models $B_{p^r}$: 
\[K'(p^r,p^r-1)\triangleq K_A=\left(
   \begin{array}{cccccc}
     2 &-1 &0& \cdots & 0 &0\\
    -1 &2 & -1 & \cdots & 0 &0 \\
    0 & -1& 2 & \cdots& 0 &0 \\
     \vdots & \vdots & \vdots & \ddots & 0 & 0 \\
     0 & 0 & 0 &\cdots & 2 &-1 \\
     0 & 0 & 0 & \cdots & -1 & 2 \\
   \end{array}
 \right)\]
By induction, the matrix $K_A$ 
is even positive-definite and $det(K_A)=p^r.$ As $(K_A)_{c,1}^*=-1$, so $d_{c-1}=1$ which means $\alpha_k=1$ for $1\leq k\leq c-1$ and $\alpha_c=p^r$. Then 
  the Smith normal form of $K_A$ verifies $ \mathbb{Z}_c/K_A\mathbb{Z}_c\cong\mathbb{Z}_{p^r}$.
\end{proof}
\end{lem}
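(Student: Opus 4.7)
The plan is to take $K_A$ to be the Cartan matrix of $SU(p^r)$, i.e.\ the $(p^r-1)\times(p^r-1)$ tridiagonal matrix with $2$'s on the diagonal and $-1$'s on the sub- and super-diagonals, and then invoke Lemma~\ref{conjugate lattice}(3) to obtain a Gram matrix for the conjugate family $A_{p^r}$. Evenness of $K_A$ is immediate from the diagonal entries, while positive-definiteness follows from the classical recursion $d_n=2d_{n-1}-d_{n-2}$ on leading principal minors with $d_1=2$, $d_2=3$, giving $d_n=n+1>0$ and in particular $\det K_A=p^r$.

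For the discriminant group, the $(c,1)$-cofactor of $K_A$ is the determinant of an upper-triangular matrix with unit diagonal, hence is $\pm 1$. Therefore the $(c-1)$-st determinant divisor equals $1$, and the Smith normal form of $K_A$ is $\mathrm{diag}(1,\ldots,1,p^r)$, giving $\mathbb{Z}^c/K_A\mathbb{Z}^c\cong\mathbb{Z}_{p^r}$. The main step is to identify the discriminant form. Using the standard description of the dual of the root lattice $A_{p^r-1}$ via fundamental weights, a generator of the discriminant group has norm squared $(p^r-1)/p^r\pmod{2}$; converting to the $\mathbb{Q}/\mathbb{Z}$-valued form $q$ with $\theta=e^{2\pi i q}$, one obtains $q(1)=(p^r-1)/(2p^r)=n/p^r$ with $n=(p^r-1)/2$. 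To distinguish between $A_{p^r}$ and $B_{p^r}$, the key computation is $\left(\frac{2n}{p}\right)=\left(\frac{p^r-1}{p}\right)=\left(\frac{-1}{p}\right)$, which equals $-1$ precisely when $p\equiv -1\pmod 4$. Hence $K_A$ realizes $B_{p^r}$ exactly under the stated hypothesis; this Legendre-symbol identification is the main content of the proof.

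Signature compatibility follows since $K_A$ is positive-definite of rank $p^r-1$: using $p^2\equiv 1\pmod 8$ for odd $p$, a brief case check verifies $p^r-1\equiv c_{B_{p^r}}\pmod 8$ in all three relevant cases ($r$ even; $r$ odd with $p\equiv 3\pmod 8$; $r$ odd with $p\equiv 7\pmod 8$), as tabulated in Thm.~\ref{ calculate c}. Finally, applying Lemma~\ref{conjugate lattice}(3) to the lattice $L_A$ with Gram matrix $K_A$ produces an even $+$-definite complement $L_A^\perp$ inside a self-gluing of $L_A^{\oplus 8}$; its Gram matrix $K_A^\perp$ realizes the conjugate metric group $(\mathbb{Z}_{p^r},-q)$, which is $A_{p^r}$ by Thm.~\ref{classification_abelian}. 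The only real obstacle is the discriminant-form computation, where the arithmetic hypothesis $p\equiv -1\pmod 4$ is precisely what is needed.
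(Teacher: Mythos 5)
Your proposal is correct and follows essentially the same route as the paper: the Cartan matrix of the $A_{p^r-1}$ root system (the paper's ``$SU(p^r-1)$'' is evidently a typo for $SU(p^r)$, since the matrix is $(p^r-1)\times(p^r-1)$ with determinant $p^r$, and you implicitly correct this), evenness and positive-definiteness via the principal-minor recursion $d_n=2d_{n-1}-d_{n-2}$, the unit $(c,1)$-cofactor forcing $d_{c-1}=1$ and hence Smith normal form $\mathrm{diag}(1,\ldots,1,p^r)$, and Lemma~\ref{conjugate lattice}(3) to pass to the conjugate family $A_{p^r}$. Your write-up is in fact more complete than the paper's own proof, which only verifies the discriminant \emph{group} and asserts without argument that the model realized is $B_{p^r}$: you compute the discriminant form $q(1)=\frac{(p^r-1)/2}{p^r}$ from the norm $\frac{p^r-1}{p^r}$ of the generating fundamental weight and check $\left(\frac{p^r-1}{p}\right)=\left(\frac{-1}{p}\right)=-1$ exactly when $p\equiv-1\pmod 4$, which is the only place the arithmetic hypothesis actually enters, and you additionally confirm $p^r-1\equiv c_{B_{p^r}}\pmod 8$ against Thm.~\ref{ calculate c}. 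One trivial slip: the $(c,1)$-minor is lower (not upper) triangular with $-1$'s on its diagonal, but its determinant is still $\pm 1$, so the invariant-factor argument is unaffected.
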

 \begin{lem}\label{former reduced det}
Suppose A is $m\times m$ matrix, D is $n\times n$ matrix, B is $m\times n$ matrix, C is $n\times m$ matrix,
if A is invetible, then
\[det(\left(
        \begin{array}{cc}
          A & B \\
          C & D \\
        \end{array}
      \right)
)=det(A)det(D-CA^{-1}B)\]
\begin{proof}
Consider the column transformation \[\left(
                                       \begin{array}{cc}
                                         A & B \\
                                         C & D \\
                                       \end{array}
                                     \right)\left(
                                              \begin{array}{cc}
                                                E_m & -A^{-1}B \\
                                               0 & E_n \\
                                              \end{array}
                                            \right)=\left(
                                                      \begin{array}{cc}
                                                        A & 0\\
                                                       C& D-CA^{-1}B\\
                                                      \end{array}
                                                    \right)
\]
Where $E_m$ is identity matrix of order m.

Take the determinant of both sides which completes our proof.
\end{proof}

\end{lem}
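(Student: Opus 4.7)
The plan is to reduce the block matrix to a block triangular form via one block column operation and then invoke the multiplicativity of the determinant together with the standard formula for block triangular determinants. Since $A$ is invertible, the Schur complement $D - CA^{-1}B$ is well defined, and the goal is to exhibit a factorization in which $A$ and $D - CA^{-1}B$ appear as the diagonal blocks.

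Concretely, I would right-multiply the block matrix by the block upper-triangular matrix
\[
M \;=\; \begin{pmatrix} E_m & -A^{-1}B \\ 0 & E_n \end{pmatrix},
\]
which is exactly the factor used in the authors' sketch. A direct block computation gives
\[
\begin{pmatrix} A & B \\ C & D \end{pmatrix} M \;=\; \begin{pmatrix} A & 0 \\ C & D - CA^{-1}B \end{pmatrix},
\]
a block lower-triangular matrix.

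Two elementary determinant facts then finish the argument. First, $M$ is block upper-triangular with identity diagonal blocks, so $\det M = 1$. Second, the determinant of a block triangular matrix with square diagonal blocks equals the product of the determinants of those blocks; this can be checked by a Laplace expansion along the zero block or, equivalently, by reducing each diagonal block to triangular form and observing that the off-diagonal block never contributes to the leading diagonal product. Combining these yields
\[
\det\begin{pmatrix} A & B \\ C & D \end{pmatrix} \;=\; \det(A)\,\det(D - CA^{-1}B),
\]
as claimed.

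There is no substantive obstacle here: the only step requiring any care is the block triangular determinant formula, which is entirely standard. The lemma is the classical Schur complement identity and will be used in what follows to evaluate determinants of the $K$-matrices built from gluing constructions.
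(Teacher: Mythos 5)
Your proof is correct and follows essentially the same route as the paper: right-multiplication by $\begin{pmatrix} E_m & -A^{-1}B \\ 0 & E_n \end{pmatrix}$ to produce the block lower-triangular factorization, then multiplicativity of the determinant. The only difference is that you spell out the two standard facts ($\det M = 1$ and the block-triangular determinant formula) that the paper leaves implicit, which is a harmless elaboration.
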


\begin{lem}\label{construct  K}
Suppose $p\equiv1$mod4, for $r\geq1$ and $s=\pm1$, $s=1$ correspondents to $A_{p^r}$, $s=-1$ correspondents to $B_{p^r}$, there is a prime $p'\equiv-1$mod4 satisfying $(\frac{2p^r}{p'})=1$ and $(\frac{2p'}{p})=s$, then for group $A=\mathbb{Z}_{p^r}$ and central charge $c=p'+1$, we can always construct an even positive definite $K_A$ and $\mathbb{Z}_c/K_A\mathbb{Z}_c\cong A.$
\begin{proof}
When  $p\equiv1$mod4, suppose $(\frac{2}{p})=m$, where $m=\pm1$, then $(\frac{2p'}{p})=s$ 
is equivalent to $(\frac{p'}{p})=ms$. Consider $1=(\frac{2p^r}{p'})=(\frac{2}{p'})(\frac{p}{p'})^r=(\frac{2}{p'})(\frac{p'}{p})^r=(\frac{2}{p'})(ms)^r$, then $(\frac{2p^r}{p'})=1$ is equivalent to $(\frac{2}{p'})=(ms)^r$. Then the conditions about $p'$ reduced to $(\frac{p'}{p})=ms$ and $(\frac{2}{p'})=(ms)^r$. According to Chinese Residue Theorem and Dirichlet Theorem, there are infinitely many prime $p'$ such that $p'\equiv d$(mod p) and $p'\equiv\pm1,\pm3$(mod 8), where d is a quadratic residual of p if ms=1, d is a quadratic nonresidual of p if ms=-1.
As $(\frac{2p^r}{p'})=1$, so there is t such that $t^2\equiv 2p^r$mod($p'$), where $1\leq t\leq p'$. Set $c=p'+1$, we claim that 
\[K''(p^r,p'+1,s)\triangleq K_A=
 \left(
   \begin{array}{ccccccccc}
     \frac{p'p^r+1}{2} & 0&0 &0& \cdots&0& \cdots & 0 &p^r\\
     0&2 & -1 & 0&\cdots&0& \cdots & 0 &0 \\
    0 & -1 & 2 &-1& \cdots&0& \cdots& 0 &0 \\
     \vdots & \vdots& \vdots & \vdots & \ddots & \vdots & \ddots&\vdots &\vdots  \\
     0 & 0 & 0 &0&\cdots & 2 &\cdots&0& 1 \\
\vdots & \vdots& \vdots & \vdots & \ddots & \vdots &\ddots&\vdots &\vdots \\
0 & 0 & 0 &0&\cdots &0& \cdots& 2 & 0 \\
     p^r & 0 & 0 &0& \cdots &1& \cdots& 0 & \frac{2p^r+t(p'-t)}{p'} \\
   \end{array}
 \right)
 \]
is an even positive definite matrix and $\mathbb{Z}_c/K_A\mathbb{Z}_c\cong A,$ where the order of $K_A$ is c and it has entry 1 in $(c-t,c)$ and $(c,c-t)$ position.

As $p'\equiv-1$mod4 and $p\equiv1$mod4, so $\frac{p'p^r+1}{2}$ is even.
As $t^2\equiv 2p^r$mod($p'$), so $\frac{2p^r+t(p'-t)}{p'}$ is even. Next we show $K_A$ is positive definite by computing its principal minors. 

Set $A_s$ be the submatrix of $K_A$ by deleting the last c-s rows and the last c-s columns, $f(s)$ is its determinant, we need to prove $f(s)>0$ for all $1\leq s\leq c$. $f(1)=\frac{p'p^r+1}{2}>0, f(2)=p'p^r+1>0.$ In fact, when $1\leq s\leq c-1$, we have $f(s)=\frac{(p'p^r+1)s}{2}>0$. Expand $K_A$ along the first column, $f(c)=\frac{p'p^r+1}{2}\times2p^r-p^rp^rp'=p^r>0,$ where $2p^r$ is the determinant of M.
\[M=\left(
   \begin{array}{cccccccc}
     
    2 & -1 & 0&\cdots&0& \cdots & 0 &0 \\
     -1 & 2 &-1& \cdots&0& \cdots& 0 &0 \\
      \vdots& \vdots & \vdots & \ddots & \vdots & \ddots&\vdots &\vdots  \\
     0 & 0 &0&\cdots & 2 &\cdots&0& 1 \\
\vdots& \vdots & \vdots & \ddots & \vdots &\ddots&\vdots &\vdots \\
 0 & 0 &0&\cdots &0& \cdots& 2 & 0 \\
     0 & 0 &0& \cdots &1& \cdots& 0 & \frac{2p^r+t(p'-t)}{p'} \\
   \end{array}
 \right)\]
 Using Lem\ref{former reduced det} and set 
 \[A=\left(
   \begin{array}{ccccccc}
     
    2 & -1 & 0&\cdots&0& \cdots & 0  \\
     -1 & 2 &-1& \cdots&0& \cdots& 0  \\
      \vdots& \vdots & \vdots & \ddots & \vdots & \ddots&\vdots  \\
     0 & 0 &0&\cdots & 2 &\cdots&0 \\
\vdots& \vdots & \vdots & \ddots & \vdots &\ddots&\vdots \\
 0 & 0 &0&\cdots &0& \cdots& 2  \\
   
   \end{array}
 \right)\]
 $B=(0,0,\cdots,1,\cdots,0)^t$ which is a column vector of dimension $p'-1$ and has 1 in $p'-t$ position and 0 otherwise. $C=(0,0,\cdots,1,\cdots,0)$ which is a row vector of dimension $p'-1$ and has 1 in $p'-t$ position and 0 otherwise. $D=(\frac{2p^r+t(p'-t)}{p'})$ is dimension 1.We can calculate the determinant of $M$:
 \begin{align*}
     det(M)&=det(A)det(D-CA^{-1}B)\\
 &=p'(\frac{2p^r+t(p'-t)}{p'}-A^{-1}_{(p'-t,p'-t)})\\
 &=p'(\frac{2p^r+t(p'-t)}{p'}-\frac{A^{*}_{(p'-t,p'-t)}}{p'})\\
  &=p'(\frac{2p^r+t(p'-t)}{p'}-\frac{(p'-t)t}{p'})\\
  &=p'\times\frac{2p^r}{p'}\\
  &=2p^r
 \end{align*}
 
Next, we show $\mathbb{Z}_c/K_A\mathbb{Z}_c\cong A.$
We consider the invariant factor of $K_A$, take the same notations as in
Thm. \ref{K Wall}, $(K_A)_{c,c}^*=\frac{(p'p^r+1)p'}{2},(K_A)_{c,1}^*=p^rp'$ which means $d_{c-1}\,|\,(\frac{(p'p^r+1)p'}{2},p^rp')=p'$ but $d_{c-1}\,|\, p^r$,
 where $p^r$ is the determiant of $K_A$, as $(p,p')=1$, so $d_{c-1}=1$ which means $\alpha_k=1$ for $1\leq k\leq c-1$ and $\alpha_c=p^r$. Then 
  the Smith normal form of $K_A$ verifies $ \mathbb{Z}_c/K_A\mathbb{Z}_c\cong\mathbb{Z}_{p^r}$.

\end{proof}
\end{lem}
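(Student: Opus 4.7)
The plan is to follow the structure already established for the companion Lemma \ref{ direct construct K}, where a Cartan-like $A_{p'-1}$ block supplies the even positive-definite backbone, and then to modify two entries so the correct finite quadratic form $(\mathbb{Z}_{p^r},q)$ appears as the discriminant form. The four things I must control simultaneously are: (i) existence of the auxiliary prime $p'$, (ii) evenness of all diagonal entries, (iii) positive-definiteness, and (iv) the correct order/structure of the discriminant group, together with the right quadratic-form residue class (encoded by $s$).

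First I would address the existence of $p'$. The two hypotheses $\bigl(\tfrac{2p^r}{p'}\bigr)=1$ and $\bigl(\tfrac{2p'}{p}\bigr)=s$ are, by quadratic reciprocity (using $p\equiv 1 \pmod 4$ so that the flip from $\bigl(\tfrac{p}{p'}\bigr)$ to $\bigl(\tfrac{p'}{p}\bigr)$ is free), congruence conditions on $p'$ modulo $8p$. Dirichlet's theorem on primes in arithmetic progressions, combined with CRT, then produces infinitely many primes $p'\equiv -1 \pmod 4$ satisfying both Legendre-symbol constraints. Given such a $p'$, the square-root condition $t^2\equiv 2p^r \pmod{p'}$ has a solution $t\in\{1,\dots,p'\}$.

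Next I would write down the candidate $K$-matrix: start with the $(p'-1)\times(p'-1)$ Cartan block of $A_{p'-1}$ (even, positive definite, determinant $p'$), prepend an extra row/column with diagonal entry $\tfrac{p'p^r+1}{2}$ and a single off-diagonal $p^r$ coupling it to the last node, and modify the final diagonal entry from $2$ to $\tfrac{2p^r+t(p'-t)}{p'}$ together with an entry $1$ in positions $(c-t,c)$ and $(c,c-t)$. For evenness I use $p'\equiv -1$, $p\equiv 1\pmod 4$ to see $\tfrac{p'p^r+1}{2}$ is even; evenness of $\tfrac{2p^r+t(p'-t)}{p'}$ follows from $t^2\equiv 2p^r \pmod{p'}$ after checking parity. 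For positive-definiteness I compute the leading principal minors $f(s)$ inductively: within the Cartan block the minors grow linearly (the $A_n$ pattern), the first-row bordering contributes a uniform factor $\tfrac{p'p^r+1}{2}$, and at the very last step I apply Lemma \ref{former reduced det} (Schur complement) with the Cartan block as $A$ and the inverse-entry $A^{-1}_{(p'-t,p'-t)}=(p'-t)t/p'$ to get $\det(K_A)=p^r>0$.

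For the discriminant group, I would exhibit the cofactor matrix and examine $(K_A)^*_{c,c}=\tfrac{(p'p^r+1)p'}{2}$ and $(K_A)^*_{c,1}=p^r p'$; the $(c{-}1)$st determinantal divisor must divide their gcd, namely $p'$, while it also divides $\det(K_A)=p^r$, and $(p,p')=1$ forces it to be $1$. Hence all invariant factors except the last are $1$ and the last is $p^r$, so the Smith normal form gives $\mathbb{Z}^c/K_A\mathbb{Z}^c\cong\mathbb{Z}_{p^r}$. Finally, to pin down which of the two possible quadratic forms on $\mathbb{Z}_{p^r}$ is realized (i.e.\ $A_{p^r}$ versus $B_{p^r}$), I would read off the discriminant form on a generator of the cokernel and match the Legendre-symbol sign $s$ to the class $(\tfrac{2n}{p})$. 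The main obstacle I expect is the last step: verifying that the particular choice $\bigl(\tfrac{2p'}{p}\bigr)=s$ produces the correct quadratic-form class rather than its conjugate; this is delicate because the generator of the cokernel is not read off any single row but requires tracing an explicit preimage of $1\in\mathbb{Z}_{p^r}$ under $K_A$ and then evaluating the bilinear form.
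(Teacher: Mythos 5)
Your proposal follows the paper's proof essentially step for step: the same reduction of the two Legendre-symbol conditions via quadratic reciprocity (free sign flip because $p\equiv 1 \pmod 4$), the same appeal to CRT plus Dirichlet for the existence of $p'$, the identical bordered matrix $K''(p^r,p'+1,s)$ built from the $A_{p'-1}$ Cartan block with the extra $1$'s in positions $(c-t,c)$ and $(c,c-t)$, the same parity checks for the two exceptional diagonal entries, the same leading-principal-minor computation $f(s)=\tfrac{(p'p^r+1)s}{2}$ finished by the Schur-complement Lemma \ref{former reduced det} with $A^{-1}_{(p'-t,p'-t)}=t(p'-t)/p'$, and the same cofactor/Smith-normal-form argument yielding $\mathbb{Z}^c/K_A\mathbb{Z}^c\cong\mathbb{Z}_{p^r}$.

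One remark on the ``main obstacle'' you flag at the end: verifying that the discriminant \emph{form} (not merely the group) lands in the class prescribed by $s$ is a step the paper's own proof also does not carry out --- it stops at the group isomorphism --- so you are not behind the paper there; if anything, you are more candid about the gap. For $r$ odd you can close it without tracing a preimage of $1$: by the Gauss--Milgram formula the central charge of the discriminant form equals the signature $c=p'+1$ mod $8$, and the hypothesis $(\tfrac{2p'}{p})=s$ forces $p'\equiv 7$ or $3 \pmod 8$ in exactly the pattern that makes $p'+1\equiv 0$ or $4 \pmod 8$ match the central charges of $A_{p^r}$ versus $B_{p^r}$ in Thm.~\ref{ calculate c}, and for odd $r$ these two classes have distinct central charges, so the signature alone selects the correct form. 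For $r$ even, however, both classes have central charge $0$ mod $8$, the signature cannot distinguish them, and the explicit evaluation you describe (computing $K_A^{-1}$ mod $2\mathbb{Z}$ on a generator of the cokernel) is genuinely required; neither your sketch nor the paper completes that verification.
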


We can construct even positive-definite $K_A$ for those groups and central charge c using Thm. \ref{ calculate c}, Lem. \ref{ direct construct K}, Lem. \ref{former reduced det}, and Lem. \ref{conjugate lattice}.
 
\begin{thm}\label{exstence KA}
Explicit even positive-definite $K$-matrices $K_A$ for the $8$ families of abelian anyon models are given as in Table \ref{tableK} below.  More concretely, 
 \begin{table}[!htbp]\label{tableK}
  \centering 
 \caption{$K_A$}  
 \begin{tabular}{|c|c|c|}
\hline  
{}& $p\equiv1$mod4 &$p\equiv-1$mod4\\
\hline  
$A_{p^r}$&$K''(p^r,p'+1,1)$&$K'(p^r,p^r-1)^{\perp}$\\
\hline
$B_{p^r}$&$K''(p^r,p'+1,-1)$&$K'(p^r,p^r-1)$\\
\hline

\end{tabular}

\begin{tabular}{|c|c|}

\hline  
{}&all r\\
\hline  
$A_{2^r}$&$(2^r)$\\
\hline
$B_{2^r}$&$(2^r)^{\perp}$\\
\hline

\end{tabular}

\begin{tabular}{|c|c|c|}
\hline  
{}& r even &r odd\\
\hline  
$C_{2^r}$&$K_e(r)^{\perp}$&$K_o(r)^{\perp}$\\
\hline
$D_{2^r}$&$K_e(r)$&$K_0(r)$\\
\hline

\end{tabular}

\end{table}
 
\begin{itemize}

\item If $p\equiv -1$ mod4, using Lem. \ref{ direct construct K}, we have $K_A=K'(p^r,p^r-1)$ for $B_{p^r}$, using Lem. \ref{conjugate lattice}, we have $K_A=K'(p^r,p^r-1)^{\perp}$ for $A_{p^r}$.

\item If $p\equiv 1$ mod4, using Lem. \ref{construct K}, we have $K_A=K''(p^r,p'+1,1)$ for $A_{p^r}$ when s=1, and  $K_A=K''(p^r,p'+1,-1)$ for $B_{p^r}$ when s=-1.
    \item For $A_{2^r},c=1$ has $K_A=(2^r)$.
    \item For $B_{2^r}$, using Lem. \ref{conjugate lattice}, we have $K_A=(2^r)^{\perp}$ and we can choose c=7.

 \item For $D_{2^r}, r\geq2$, r even, we have $c\equiv3$ mod8. 
    we can always choose c=3, \[K_e(r)=\left(
   \begin{array}{ccc}
     
    \frac{2^r+2}{3} & 0 &1  \\
     0& 2 &-1 \\
   
 1 & -1 &2  \\
   
   \end{array}
 \right)\]
    
   \item For $D_{2^r}, r\geq3,$ r odd, we have $c\equiv7$mod8.   
    we can always choose c=7, \[K_o(r)=\left(
   \begin{array}{ccccccc}
     
    \frac{2^r+4}{3} & 0 &1 &0&0&0&-1 \\
     0& 2 &-1&0&0&0&0 \\
   
 1 & -1 &2&-1&0&0&0  \\
  0&0 & -1 &2&-1&0&-1\\
  0&0&0& -1 &2&-1&0\\
  0&0&0&0& -1 &2&0\\
  -1&0&0& -1&0&0&2\\
   \end{array}
 \right)\]

\item For $C_{2^r}, r\geq2$, using Lem\ref{conjugate lattice}, we have $K_A=(K_{D_{2^r}})^{\perp}$.
    
\end{itemize}
 
\end{thm}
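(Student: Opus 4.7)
The proof reduces to a case-by-case verification guided by Table~\ref{tableK}. For each entry I must establish four things: that $K_A$ is an even integral matrix, that it is positive definite with signature matching the central charge from Theorem~\ref{ calculate c}, that its discriminant group $\mathbb{Z}^c/K_A\mathbb{Z}^c$ is isomorphic to $A$, and that the induced discriminant form coincides with $q$. Once one member of a conjugate pair is realized by a lattice $L$, its partner is automatically produced by applying Lemma~\ref{conjugate lattice}(3) to a self-glued copy of $L^{\oplus 8}$, which handles half of the entries in the table for free.

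For an odd prime $p \equiv -1 \pmod 4$, I would invoke Lemma~\ref{ direct construct K}, whose proof already confirms that $K'(p^r, p^r-1)$ is even, positive definite of signature $p^r-1$, with discriminant group $\mathbb{Z}_{p^r}$ and with discriminant form $q(1) = (p^r-1)/p^r \equiv -1/p^r \pmod 1$. Using that $(-1/p) = -1$ when $p \equiv -1 \pmod 4$, a short Legendre symbol computation identifies the resulting class as $B_{p^r}$, and the conjugate $A_{p^r}$ is realized by $K'(p^r, p^r-1)^{\perp}$. For $p \equiv 1 \pmod 4$, Lemma~\ref{construct  K} supplies $K''(p^r, p'+1, s)$ for an auxiliary prime $p'$, whose existence is guaranteed by the combination of Dirichlet's theorem on arithmetic progressions with the quadratic reciprocity constraints in the lemma; the sign $s = \pm 1$ then distinguishes $A_{p^r}$ from $B_{p^r}$.

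For $p=2$, the model $A_{2^r}$ is realized trivially by the $1\times 1$ lattice $(2^r)$ since this has discriminant form $1/2^{r+1}$ on the natural generator; $B_{2^r}$ follows by complement. The substantive work is verifying the matrices $K_e(r)$ and $K_o(r)$ for $D_{2^r}$. In each case I would check evenness by noting that $(2^r+2)/3$ is even when $r$ is even and $(2^r+4)/3$ is even when $r$ is odd; compute $\det K = 2^r$ by cofactor expansion along the first row, relying on the sparse off-diagonal entry in position $(1,3)$ to produce the needed cancellation; verify positive-definiteness via Sylvester's criterion (Lemma~\ref{signature}) applied to the leading principal minors; and confirm the Smith normal form $(1,\dots,1,2^r)$ by exhibiting a $(c-1)\times(c-1)$ submatrix of the cofactor matrix whose determinant is odd, exactly as in the proof of Theorem~\ref{K Wall}. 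The $C_{2^r}$ family is then obtained by complement.

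The main obstacle I anticipate is property~(iv) in the $D_{2^r}$ cases: showing that the discriminant form on a generator is precisely $-5/2^{r+1}$ rather than one of the three other isomorphism-class representatives $\pm 1/2^{r+1}, 5/2^{r+1}$, since these four values distinguish the four distinct anyon models $A_{2^r}, B_{2^r}, C_{2^r}, D_{2^r}$ on the common underlying group $\mathbb{Z}_{2^r}$. I would extract a generator of the discriminant group as the class of a suitable column of the cofactor matrix of $K_A$ divided by $2^r$, and then read off the quadratic form value from the corresponding diagonal entry of $K_A^{-1}$ modulo $1$. To make this computation tractable I would use the block inversion formula of Lemma~\ref{former reduced det} to reduce the inverse of $K_e(r)$ or $K_o(r)$ to the inverse of the short tridiagonal tail, which is elementary; the value $-5/2^{r+1}$ should then emerge from the explicit $(2^r+2)/3$ or $(2^r+4)/3$ corner entry modulo the denominator $2^{r+1}$.
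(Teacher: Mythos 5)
Your proposal is correct and takes essentially the same route as the paper, whose proof of this theorem consists precisely of assembling the direct-construction lemma for $K'(p^r,p^r-1)$, the $K''(p^r,p'+1,s)$ lemma (with the auxiliary prime $p'$ from Dirichlet plus quadratic reciprocity), the complement-lattice lemma for the conjugate partners, and the evenness/Sylvester/Smith-normal-form verifications you describe for $K_e(r)$ and $K_o(r)$. If anything, you are more careful than the paper on your point about the discriminant form: the paper's lemmas check evenness, positive definiteness, signature, and the discriminant group but leave the identification of the form class implicit, and your observation that this check is genuinely needed when $r$ is odd (since $B_{2^r}$ and $D_{2^r}$, and likewise $A_{2^r}$ and $C_{2^r}$, then share the same central charge mod $8$), together with your corner-entry computation yielding $q(\text{gen})=3/2^{r+1}$, which agrees with $-5/2^{r+1}$ up to multiplication by odd squares, supplies a step the paper only asserts.
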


\begin{example}

\begin{itemize}

\item The case $B_{5}, p=5, r=1$ is an interesting example.  As $p \equiv 1$ mod $4, s=-1$, using Lem. \ref{construct  K}, we look for a $p' \equiv -1$ mod 4 satisfying $(\frac{2\times5}{p'})=1$ and $(\frac{2p'}{5})=-1$. Since  $5 \equiv -3$ mod 8, the second condition is equivalent to $(\frac{p'}{5})=1$, we can take $p' \equiv 1$ mod 5, The first one is equivalent to $(\frac{2}{p'})=1$ by quadratic reciprocity. As  $p' \equiv \pm1$ mod 8, thus $p' \equiv -1$  mod8 by considering $p' \equiv -1$ mod 4.  Using the Chinese Remainder Theorem, we know $p' \equiv 31$  mod 40, we can take $p'=31$, so $K''(5,32,-1)$ is the desired matrix.

The size of our solution is not the smallest.  The solution for $c=8$ is unique given by the $K$-matrix in the summary.

The solutions for $c=16$ is completed in \cite{Yang94}.

\item $D_{2^4}$, we know  
     \[K_e(2)=\left(
   \begin{array}{ccc}
     
    6 & 0 &1  \\
     0& 2 &-1 \\
   
 1 & -1 &2  \\
   
   \end{array}
 \right)\]
     is the desired matrix.
     \item $D_{2^3}$, we know  
    \[K_o(3)=\left(
   \begin{array}{ccccccc}
     
    4 & 0 &1 &0&0&0&-1 \\
     0& 2 &-1&0&0&0&0 \\
   
 1 & -1 &2&-1&0&0&0  \\
  0&0 & -1 &2&-1&0&-1\\
  0&0&0& -1 &2&-1&0\\
  0&0&0&0& -1 &2&0\\
  -1&0&0& -1&0&0&2\\
   \end{array}
 \right)\]

is the desired matrix.
     
\end{itemize}

\end{example}

\subsubsection{$K$-matrices for families $E$ and $F$}
Suppose explicit $K_A$ matrices for $B_{2^r}$ and $C_{2^r}$ are given, then we could obtain $K_A$ matrix for $E_{2^r}$ and $F_{2^r}$ from $K_{B_{2^r}}$ and $K_{C_{2^r}}$ as follows.
 
Given $K_A$ for $B_{2^r}$ and $C_{2^r}$ and suppose
$G_{B_{2^r}}$ is the matrix consisting of the row vectors of a basis of $L_{B_{2^r}}$, i.e. 
\[G_{B_{2^r}}=\begin{pmatrix}\alpha_1\\
\alpha_2\\
\vdots\\
\alpha_n\\
\end{pmatrix}\]
$L^{*}_{B_{2^r}}/L_{B_{2^r}}\cong B_{2^r}$, $\alpha_1, \alpha_2, \cdots, \alpha_n$ is a basis of the lattice $L_{B_{2^r}}$, then $G_{B_{2^r}}G_{B_{2^r}}^t=K_{B_{2^r}}$.
  
Similarly, let 
$G_{C_{2^r}}$ be the matrix consisting of the row vectors of a basis of $L_{C_{2^r}}$, i.e. 
 \[G_{C_{2^r}}=\begin{pmatrix}\beta_1\\
\beta_2\\
 \vdots\\
\beta_m\\
  \end{pmatrix}\]
$L^{*}_{C_{2^r}}/L_{C_{2^r}}\cong C_{2^r}$, $\beta_1, \beta_2, \cdots, \beta_m$ is a basis of the lattice $L_{C_{2^r}}$, then $G_{C_{2^r}}G_{C_{2^r}}^t=K_{C_{2^r}}$.

If $K_{B_{2^r}}$ and $K_{C_{2^r}}$ are known, then $G_{B_{2^r}}$ and $G_{C_{2^r}}$ can be constructed. To find $K_A$ matrix for $E_{2^r}$ and $F_{2^r}$, we do the following.

As $L^{*}_{B_{2^r}}/L_{B_{2^r}}\cong B_{2^r}$, there is a generator $\lambda$ of 
$L^{*}_{B_{2^r}}/L_{B_{2^r}}$ satisfying $\lambda \cdot\lambda\equiv \frac{-1}{2^k} $(mod 2) ,
then
\[
G_{E_{2^r}}=\begin{pmatrix}
   \begin{array}{c c| c|c}
   \frac{1}{\sqrt{2^k}}  & \frac{1}{\sqrt{2^k}}   &\lambda  &\lambda \\
   0& \sqrt{2^k}& & \\\hline
   & &\alpha_1& \\
     & &\alpha_2& \\
       & &\vdots& \\
    & &\vdots& \\
    & &\alpha_n& \\ \hline
   &  & &\alpha_1 \\
    &  & &\alpha_2 \\
    & && \vdots \\
    & && \alpha_n \\

   \end{array}
 \end{pmatrix}
\]
As  $L^{*}_{C_{2^r}}/L_{C_{2^r}}\cong C_{2^r}$, there is a generator $\mu$ of 
$L^{*}_{C_{2^r}}/L_{C_{2^r}}$ satisfying $\mu \cdot\mu\equiv \frac{-3}{2^k} $(mod 2) ,
then
\[
G_{F_{2^r}}=\begin{pmatrix}
   \begin{array}{c c c|c}
  \frac{1}{\sqrt{2^k}}  & \frac{1}{\sqrt{2^k}} & \frac{1}{\sqrt{2^k}} &\mu \\
   & \sqrt{2^k}& & \\
  & &\sqrt{2^k}&  \\ \hline
  & & &\beta_1 \\ 
     && &\beta_2 \\
       && &\vdots \\
    && &\vdots \\
    && &\beta_m \\

   \end{array}
 \end{pmatrix}
\]
 
Then  $K_{E_{2^r}}=G_{E_{2^r}}G_{E_{2^r}}^t$ and $K_{F_{2^r}}=G_{F_{2^r}}G_{F_{2^r}}^t$. 

Let us take $r=2$ for example.

$L_{B_{2^2}}=D_7$ and 
\[G_{B_{2^2}}=\begin{pmatrix}\alpha_1\\
\alpha_2\\
\vdots\\
\alpha_7\\
\end{pmatrix}=\begin{pmatrix}-1&-1&0&0&0&0&0\\
1&-1&0&0&0&0&0\\
0&1&-1&0&0&0&0\\
0&0&1&-1&0&0&0\\
0&0&0&1&-1&0&0\\
0&0&0&0&1&-1&0\\
0&0&0&0&0&1&-1\\

\end{pmatrix}\]

$\lambda=(1/2,1/2,1/2,1/2,1/2,1/2,1/2)$ is the generator of $L^{*}_{B_{2^2}}/L_{B_{2^2}}$ satisfying $\lambda \cdot\lambda\equiv \frac{-1}{2^2} $(mod 2) ,then
\[
G_{E_{2^2}}=\begin{pmatrix}
   \begin{array}{c c| c|c}
   \frac{1}{2}  & \frac{1}{2}   &\lambda  &\lambda \\
   0& 2& & \\\hline
   & &\alpha_1& \\
     & &\alpha_2& \\
       & &\vdots& \\
    & &\vdots& \\
    & &\alpha_7& \\ \hline
   &  & &\alpha_1 \\
    &  & &\alpha_2 \\
    & && \vdots \\
    & && \alpha_7 \\

   \end{array}
 \end{pmatrix}
\]

$K_{E_{2^2}}=G_{E_{2^2}}G_{E_{2^2}}^t$=

\[
\addtocounter{MaxMatrixCols}{10}
\begin{pmatrix}
4  &1&-1&{}   &{}&{}&{}   &{}&{}&-1    &{}&{}&{}    &{}&{}&{}\\
1  &4&{}&{}   &{}&{}&{}   &{}&{}&{}     &{}&{}&{}   &{}&{}&{}\\
-1 &{}&2&{}   &-1&{}&{}   &{}&{}&{}     &{}&{}&{}   &{}&{}&{}\\
{}  &{}&{}&2  &-1&{}&{}  &{}&{}&{}     &{}&{}&{}   &{}&{}&{}\\
{}  &{}&-1&-1  &2&-1&{}   &{}&{}&{}     &{}&{}&{}   &{}&{}&{}\\
{}  &{}&{}&{}  &-1&2&-1   &{}&{}&{}     &{}&{}&{}   &{}&{}&{}\\
{}  &{}&{}&{}  &{}&-1&2   &-1&{}&{}     &{}&{}&{}   &{}&{}&{}\\
{}  &{}&{}&{}  &{}&{}&-1   &2&-1&{}     &{}&{}&{}   &{}&{}&{}\\
{}  &{}&{}&{}  &{}&{}&{}   &-1&2&{}     &{}&{}&{}   &{}&{}&{}\\
-1  &{}&{}&{}  &{}&{}&{}   &{}&{}&2     &{}&-1&{}   &{}&{}&{}\\
{}  &{}&{}&{}  &{}&{}&{}   &{}&{}&{}     &2&-1&{}   &{}&{}&{}\\
{}  &{}&{}&{}  &{}&{}&{}   &{}&{}&-1     &-1&2&-1   &{}&{}&{}\\
{}  &{}&{}&{}  &{}&{}&{}   &{}&{}&{}     &{}&-1&2   &-1&{}&{}\\
{}  &{}&{}&{}  &{}&{}&{}   &{}&{}&{}     &{}&{}&-1   &2&-1&{}\\
{}  &{}&{}&{}  &{}&{}&{}   &{}&{}&{}     &{}&{}&{}   &-1&2&-1\\
{}  &{}&{}&{}  &{}&{}&{}   &{}&{}&{}     &{}&{}&{}   &{}&-1&2\\

\end{pmatrix}
\]

\subsection{Wall algorithm approach}

Wall algorithm also provides explicit small central charge realizations of abelian anyon models via $\chi$CFTs.  Inspecting the cases in Thm.  \ref{classification_abelian} and Thm. \ref{K Wall}, we obtain the following explicit Gram matrices $K_A$ for some cases of $A$.
 \begin{itemize}
\item  For $A_{p^r}$, 

\begin{enumerate}
\item when $r$ is odd and $p\equiv-3(mod\,8)$ ,
\[K_{A_{p^r}}=\begin{pmatrix}\frac{4}{p^r}&1&0&0\\
1&\frac{p^r+3}{4}&1&0\\
0&1&2&1\\
0&0&1&2 \\
\end{pmatrix}^{-1}\]

$K_{A_{p^r}}$ has signature 4 and it is positive definite.\\
\item When $r$ is odd and $p\equiv-1(mod\,8)$ ,
\[K_{A_{p^r}}=\begin{pmatrix}\frac{4}{p^r}&1\\
1&\frac{p^r+1}{4}\\
\end{pmatrix}^{-1}\]

$K_{A_{p^r}}$ has signature 2  and it is positive definite.\\ 
\end{enumerate}

\item  For $B_{p^r}$, \begin{enumerate}
\item $r$ odd and  $p\equiv-1(mod\,8)$ we can  choose $m=\frac{p-1}{2}$, then $c=2 m=p-1.$ 

\[K_{B_{p^r}}={\begin{pmatrix} \frac{p-1}{p^r} & 1 &0&\cdots&0&0\\ 1 &  \frac{p^r-1}{p-1}+1&1&\cdots&0&0\\
 0&1&2&\cdots&1&0\\\vdots&\vdots&\vdots&\ddots&\vdots&\vdots\\
 0&0&0&\cdots&2&1\\0&0&0&\cdots&1&2 \end{pmatrix}}^{-1}\]

$K_{B_{p^r}}$ is $(p-1)\times (p-1)$ has signature $p-1$  and it is positive definite.
\item  $r$ odd and  $p\equiv3(mod\,8)$ we can  choose $m=1$, then $c=2 m=2.$ 

\[K_{B_{p^r}}=\begin{pmatrix}\frac{2}{p^r}&1\\
1&\frac{p^r+1}{2}\\
\end{pmatrix}^{-1}\]

$K_{B_{p^r}}$ has signature 2  and it is positive definite.\\

\end{enumerate}
\item For $D_{2^r}, r$ even, $c=3,$ 

\[K_{D_{2^r}}=\begin{pmatrix}\frac{3}{2^r}&1&0\\
1&\frac{2^r+2}{3}&1\\
0&1&2\\
\end{pmatrix}^{-1}\]

$K_{D_{2^r}}$ has signature 3  and it is positive definite.\\

\item For $C_{2^r}, r\equiv0(mod\,4)$ , $c=5,$ 

\[K_{C_{2^r}}=\begin{pmatrix}\frac{5}{2^r}&1&0&0&0\\
1&\frac{2^r+4}{3}&1&0&0\\
0&1&2&1&0\\
0&0&1&2&1\\
0&0&0&1&2
\end{pmatrix}^{-1}\]

$K_{C_{2^r}}$ has signature 5 and it is positive definite.\\
\item 
For $B_{2^r}$,
\begin{enumerate}
\item $r\equiv 1(mod\,3)$ , $c=7,$ 

\[K_{B_{2^r}}=\begin{pmatrix}\frac{7}{2^r}&1&0&0&0&0&0\\
1&\frac{2^r+6}{3}&1&0&0&0&0\\
0&1&2&1&0&0&0\\
0&0&1&2&1&0&0\\
0&0&0&1&2&1&0\\
0&0&0&0&1&2&1\\
0&0&0&0&0&1&2\\

\end{pmatrix}^{-1}\]

$K_{B_{2^r}}$ has signature 7 and it is positive definite.\\
\item $r\equiv0(mod\,3)$ , $c=7,$ 

\[K_{B_{2^r}}=\begin{pmatrix}\frac{7}{2^r}&1&0&0&0&0&0\\
1&\frac{2^r+6}{3}&1&0&0&0&0\\
0&1&2&1&0&0&0\\
0&0&1&2&1&0&0\\
0&0&0&1&2&1&0\\
0&0&0&0&1&2&1\\
0&0&0&0&0&1&2\\

\end{pmatrix}^{-1}\]

$K_{B_{2^r}}$ has signature 7 and it is positive definite.\\

\end{enumerate}

\end{itemize}

\section{Bulk-edge correspondence revisited}

Our reconstruction of $\chi$CFTs for anyon models is motivated by the bulk-edge correspondence in topological phases of matter exemplified by the fractional quantum Hall states.  Our interests lie in broadening the correspondence by taking into account of stability, symmetry, and non-lattice $\chi$CFTs.  Abelian anyon models serve as the simplest examples to understand the general reconstruction.

\subsection{Stability of edge theory}
When abelian anyon models are realized by fractional quantum Hall states, the corresponding boundary $\chi$CFTs provide predictions for experiments in quantum point-contact experiments \cite{CFW96}.   Therefore, it is an interesting problem to understand the stable edge theories \cite{Haldane95}.  Without any extra symmetry protection, we hypothesize that the stability prefers at least small central charges and conformal weights.  Mathematically, we will focus on  
extremal $\chi$CFTs in the minimal genus \cite{TW17}.

\subsubsection{Non-uniqueness in the minimal genus}

As discussed Sec. \ref{conlattice}, the minimal genus can always be realized.  But while  
the trivial abelian anyon model $\mcV$ has a unique realization of the minimal genus $(\mcV ec, 8)$ by the $E_8$ lattice, this uniqueness already fails for the anyon model $\mcA_{23}$ with two interesting $K$-matrices 
$\begin{pmatrix}
2&1\\
1&12
\end{pmatrix}
$
and 
$\begin{pmatrix}
4&1\\
1&6
\end{pmatrix},
$
with the smallest conformal weight of $\frac{1}{23}$ and $\frac{2}{23}$, respectively.  Therefore, $B_{23}$ has two different $\chi$CFTS realizations in the minimal genus $(\mcA_{23}, 2)$.  Of course the full set of $23$ conformal weights are the same as set for both theories mod $\mbbZ$ because they realize the same anyon model.

A special case of the finiteness conjecture \cite{TW17} is:

\begin{cnj}

Given any abelian anyon model $\mcA$, there are only finitely many realizations in each realizable genus $(\mcA,c)$.

\end{cnj}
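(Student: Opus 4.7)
The plan is to split realizations in the genus $(\mcA,c)$ into lattice VOA realizations and general ones, and to control each class separately.

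For lattice VOA realizations, each realization corresponds to an even positive-definite lattice $L$ of rank $c$ whose discriminant form $q_{2,L}$ matches the prescribed metric group $(A,q)$, and by Dong's uniqueness theorem for lattice VOAs, two such lattices produce isomorphic lattice VOAs iff they are isomorphic as lattices. Counting these realizations therefore reduces to counting isomorphism classes in a fixed genus of even positive-definite integral quadratic forms with given rank and discriminant form. By the classical Minkowski--Siegel finiteness of class numbers for positive-definite genera (equivalently, by Minkowski's reduction theory), this count is finite, establishing the conjecture in the lattice case.

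For a general $\chi$CFT $\mcV$ realizing $(\mcA,c)$, I would attempt a reduction to holomorphic data. The tensor product $\mcV\otimes\overline{\mcV}$ has $\Rep(\mcV\otimes\overline{\mcV})\cong\mcA\boxtimes\overline{\mcA}$, which carries the canonical diagonal Lagrangian algebra; extending along it yields a holomorphic VOA $\mcW$ of central charge $2c$ containing $\mcV$ as a conformal subalgebra whose commutant equals $\overline{\mcV}$. The count of realizations is therefore bounded above by the number of pairs $(\mcW,\mcV)$ where $\mcW$ is holomorphic of central charge $2c$ and $\mcV\subset\mcW$ is a conformal subalgebra with the prescribed commutant. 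For a fixed $\mcW$, the admissible embeddings are parametrized up to automorphism by Lagrangian algebras in $\mcA\boxtimes\overline{\mcA}$, a finite set, so the bottleneck is the finiteness of holomorphic VOAs at fixed central charge.

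The main obstacle is therefore the finiteness of holomorphic VOA classification: Schellekens's theorem gives finiteness at $c=24$, and analogous results hold in small central charges, but at arbitrary $c$ it is a major open problem intertwined with the classification program for holomorphic VOAs. A more direct route that bypasses this obstacle would be to show that any $\mcV$ realizing a pointed $\mcA$ contains a maximal Heisenberg subalgebra whose zero-mode lattice $L_{\mcV}$ lies in a genus of even integral lattices depending only on $(\mcA,c)$, and that $\mcV$ is determined by $L_{\mcV}$ up to finitely many simple-current-type extensions indexed by isotropic subgroups of the discriminant form. If such a rigidity statement can be established, the conjecture would follow from the Minkowski--Siegel theorem alone, with no appeal to holomorphic VOA classification; the hard part is ruling out infinite families of exotic non-lattice realizations sharing the same $(\mcA,c)$.
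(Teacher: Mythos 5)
The statement you set out to prove is not proved in the paper at all: it appears there as an open conjecture, explicitly flagged as a special case of the finiteness conjecture of \cite{TW17}. So the relevant question is whether your argument closes it, and it does not. The part that is sound is the lattice case: for an even positive-definite lattice, the rank $c$ together with the discriminant form determines the genus (Nikulin), class numbers of definite genera are finite by Minkowski--Siegel reduction, and $\mcV_L\cong\mcV_{L'}$ iff $L\cong L'$, so lattice VOA realizations in a fixed genus $(\mcA,c)$ are indeed finite in number. This is consistent with the paper's discussion (e.g.\ its two rank-$2$ lattices in the genus $(\mcA_{23},2)$), but it was never the hard part.

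The general case contains a genuine error in addition to the gap you concede. The object $\mcV\otimes\overline{\mcV}$ does not exist as you use it: a unitary VOA realizing $\overline{\mcA}$ must have central charge $\equiv -c \pmod 8$, and there is no functorial conjugate of $\mcV$ at central charge $c$ (the complex conjugate of a chiral theory is antichiral, not a VOA). Consequently your holomorphic extension $\mcW$ of central charge $2c$ cannot exist unless $2c\equiv 0\pmod 8$; already for the semion ($c=1$) there is no holomorphic VOA of central charge $2$. One can repair this by choosing some auxiliary realization $\mcV'$ of $\overline{\mcA}$ at a complementary central charge, but the choice is non-canonical, and in any case the Lagrangian algebras of $\mcA\boxtimes\overline{\mcA}$ parametrize \emph{extensions} of a given $\mcV\otimes\mcV'$, not conformal subalgebras of a given $\mcW$ with prescribed commutant, so the claimed finite fibering over holomorphic data is not established. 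The bottleneck you then name---finiteness of holomorphic VOAs at fixed central charge---is exactly the case $\mcA=\mcV ec$ of the conjecture itself, so the reduction is circular rather than a simplification. Finally, your fallback ``Heisenberg rigidity'' route fails at its first step: a realization of a pointed $\mcA$ need not contain any full-rank (or even nonzero) Heisenberg subalgebra---the moonshine module realizes the trivial anyon model with trivial zero-mode lattice, and the paper conjectures such non-lattice extremal realizations exist for \emph{every} abelian anyon model---so the genus of $L_\mcV$ does not control $\mcV$, and establishing your rigidity statement amounts to restating the open problem (ruling out infinite families of exotic realizations), not solving it.
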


It would be interesting to understand the classification of extremal VOAs in each realizable genus and how many realizable genera contain extremal VOAs.

\subsection{Equivariant correspondence}

Let $\mcV_B$ be the category of VOAs whose representation category is a unitary modular tensor category or anyon model, and $\mcM_B$ the category of unitary modular tensor categories.  There is a monoidal functor $\mcR: \mcV_B\rightarrow \mcM_B$ that maps a VOA $\mcV$ to its representation category $\textrm{Rep}(\mcV)$.  If $\mcV$ has a symmetry group $G$, then a natural question is if and how we can derive the symmetry group of the modular category of $\textrm{Rep}(\mcV)$ from $G$.

The reconstruction conjecture would imply that $\mcR$ is onto, which is obviously not surjective on symmetries.  For example, for the moonshine VOA $\mcV^{\sharp}$, its representation category is the trivial modular tensor category $\mcV ec$, which can have any group as symmetry. The map $\mcR$ is not injective either as the WZW VOA $SU(2)_1$ has the $SO(3)$ as symmetry \cite{Reh94}, but its representation category the Semion model has trivial topological symmetry group.  

\begin{cnj}

The map $\mcR$ would send the symmetry group $G$ of a VOA $\mcV$ to $G/G_0$ for its representation category $\textrm{Rep}(\mcV)$, where $G_0$ is the identity component of $G$.

\end{cnj}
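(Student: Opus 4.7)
The plan is to construct the natural homomorphism from the VOA symmetry group $G$ to the group of braided tensor auto-equivalences of $\mathrm{Rep}(\mcV)$ and show it factors through $G/G_0$. The underlying principle is that topological symmetries act discretely on the finite set of simple isomorphism classes, so any continuous family of VOA automorphisms through the identity must act trivially on the categorical side.

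First, for each $g \in G$ define $T_g: \mathrm{Rep}(\mcV) \to \mathrm{Rep}(\mcV)$ sending a module $(M, Y_M)$ to $(M, Y_M \circ g)$ with morphisms acting as the identity on underlying vector spaces. A direct check verifies that $T_g$ preserves the fusion tensor product, associator, and braiding of $\mathrm{Rep}(\mcV)$, since $g$ fixes the vacuum and is compatible with intertwining operators. This yields a group homomorphism $\phi: G \to \mathrm{Aut}^{br,\otimes}(\mathrm{Rep}(\mcV))$.

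Second, endow $G$ with the topology inherited from its action on the graded pieces $\mcV_n$, making $G$ a Lie group in the strongly rational setting. Compose $\phi$ with the canonical map to $\mathrm{Sym}(\mathrm{Irr}(\mcV))$, the permutation group of the finite set of simple isomorphism classes. Since the target is finite and discrete, the continuous homomorphism must be constant on each connected component of $G$, so $G_0$ lands in the stabilizer of every isomorphism class. To upgrade from bare permutations to the full braided tensor auto-equivalence data, pass to the Zhu algebra $A(\mcV)$: elements of $G_0$ are generated by $\exp(\mathrm{ad}(v))$ for $v \in \mcV_1$, and such exponentials act on $A(\mcV)$ by inner automorphisms, which fix all isomorphism classes of $A(\mcV)$-modules and act trivially on the structural natural isomorphisms. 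Hence $G_0 \subseteq \ker\phi$ and $\phi$ descends to $\bar\phi: G/G_0 \to \mathrm{Aut}^{br,\otimes}(\mathrm{Rep}(\mcV))$.

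The main obstacle is justifying that $G_0$ is generated by the exponentials $\exp(\mathrm{ad}(v))$ with $v \in \mcV_1$; this is a Lie-algebraic fact about VOA automorphism groups that holds for strongly rational CFT-type VOAs under mild assumptions, but requires care in general, since for non-rational or non-CFT-type VOAs the weight-one Lie algebra may fail to exponentiate onto all of $G_0$. A secondary subtlety, already visible in the Moonshine and $SU(2)_1$ examples recalled above, is that $\bar\phi$ is in general neither injective nor surjective onto $\mathrm{Aut}(\mathrm{Rep}(\mcV))$; the conjecture as formulated concerns the factorization of the natural map through $G/G_0$, so these finer discrepancies between VOA-level and categorical symmetries lie beyond what this proof establishes.
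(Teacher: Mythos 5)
The statement you set out to prove is left as a \emph{conjecture} in the paper (the equivariant correspondence subsection): the authors give no proof, only the Moonshine and $SU(2)_1$ examples showing that $\mcR$ is neither injective nor surjective on symmetries. So there is no paper argument to compare against, and your attempt must stand on its own; as written it establishes at most the weak half of the statement---that the induced action on $\textrm{Rep}(\mcV)$ factors through $\pi_0(G)=G/G_0$---and even that has two genuine gaps. First, the step you yourself flag is the crux, not a side issue: you need $\mathrm{Lie}(\mathrm{Aut}(\mcV))=\{v_0 : v\in \mcV_1\}$, or at least that the exponentials $\exp(v_0)$ generate $G_0$, together with the fact that $\mathrm{Aut}(\mcV)$ is an algebraic/Lie group acting continuously on each graded piece; this is known only under hypotheses (simple, CFT-type, strongly finitely generated) and the conjecture is stated for all $\mcV$ in $\mcV_B$. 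Second, and more serious: triviality of $T_g$ for $g\in G_0$ as a \emph{braided tensor} auto-equivalence does not follow from your Zhu-algebra step. Inner action on $A(\mcV)$ controls only isomorphism classes of simple modules, and a tensor auto-equivalence can fix every object up to isomorphism while still being nontrivial, because its monoidal coherence data can twist; such ``soft'' auto-equivalences trivial on objects are exactly the ones that matter for the pointed (abelian) categories this paper studies. What is actually required is a monoidal natural isomorphism $T_g \Rightarrow \mathrm{Id}$; for $g=e^{v_0}$ the natural candidate is $e^{v_0^M}\colon M\to M$ on each module, and the real content is verifying its compatibility with the intertwining-operator tensor product, associator, and braiding---a verification your proposal never performs. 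Your continuity argument only makes the composite $G\to \mathrm{Sym}(\mathrm{Irr}(\mcV))$ locally constant; it says nothing about $\phi$ itself and cannot substitute for this check.

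Beyond these gaps, your closing paragraph concedes away part of the conjecture's content. Factorization through $G/G_0$ alone is nearly automatic once $G_0$-triviality is granted; read with any strength, the conjecture asserts that $\mcR$ assigns to $(\mcV,G)$ precisely the residue $G/G_0$ as the categorical symmetry datum. Note that the conjecture is not even precisely formulated in the paper: the Moonshine example ($G$ the Monster, $G_0$ trivial, $\textrm{Rep}(\mcV^{\sharp})=\mcV ec$) shows the induced map $G/G_0\to \mathrm{Aut}^{br,\otimes}(\textrm{Rep}(\mcV))$ cannot be injective, so ``send $G$ to $G/G_0$'' must be interpreted as a statement about the induced action (e.g.\ as a categorical $G/G_0$-action rather than a subgroup of auto-equivalences). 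Any complete proof would first have to fix this formulation; your proposal, which oscillates between the homomorphism-image reading and the action reading, does not.
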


The symmetry of lattices always gives rise to symmetries of the VOAs involving extensions due to twisting.  Since there are non-lattice realizations, it is not clear how big is the lattice symmetry subgroup.

\subsection{Non-lattice realization}

There are at least $71$ VOAs of central charge=$24$ whose representation category is trivial, while only $24$ of the $71$ are lattice VOAs.  We conjecture that this is not an isolated fact that for each abelian anyon models, there are non-lattice realizations which are not simply product of one with the non-lattice holomorphic VOAs.  A more precise formulation would be the extremal VOAs \cite{TW17}.

\begin{cnj}
For each abelian anyon model $\mcB$, there exists a non-lattice realization by an extremal VOA $\mcV$ such that $\textrm{Rep}(\mcV)\cong \mcB$.
\end{cnj}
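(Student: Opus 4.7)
The plan is to generalize the Frenkel--Lepowsky--Meurman construction of the moonshine module $\mcV^{\sharp}$, which is a non-lattice holomorphic VOA obtained as a $\mbbZ/2$-orbifold of the Leech lattice VOA. For a general abelian anyon model $\mcB=(A,q)$, I would start from a lattice VOA realization furnished by Theorem \ref{exstence KA} (tensoring, if a higher central charge is convenient, with a self-dual even lattice VOA such as $\mcV_{E_8}$ or $\mcV_{\Lambda_{24}}$) and then produce a non-lattice extremal VOA realizing the same $\mcB$ by a cyclic orbifold construction.

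Concretely, fix an even $+$-definite lattice $L$ with discriminant form $(A,q)$, so that $\textrm{Rep}(\mcV_L)\cong \mcB$. First, I would search for a finite-order isometry $g\in O(L)$ whose induced action on the discriminant group $A=L^{*}/L$ is the identity and which has no nonzero fixed vectors in $L\otimes \mbbR$; fixed-point freeness is precisely what in the moonshine case forces the orbifold to satisfy $\mcV_1=0$ and in particular rules out its being a lattice VOA. Next, I would lift $g$ to a standard order-$n$ automorphism $\hat g$ of $\mcV_L$ and form the cyclic orbifold $\mcV$, defined as the simple current extension of $\mcV_L^{\hat g}$ by the appropriate $\hat g$-twisted irreducible modules. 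The representation category of $\mcV$ is controlled by the braided $\mbbZ/n$-crossed structure on the modules of $\mcV_L^{\hat g}$ together with its equivariantization, following the framework of Kirillov--Ostrik, M\"uger, and Evans--Gannon; the hypothesis that $g$ acts trivially on $(A,q)$ should ensure that after gauging and condensing the twisted sector, one recovers $(A,q)$.

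The final verification step is that $\mcV$ is non-lattice and extremal. Non-latticeness follows from fixed-point freeness: the untwisted contribution to $\mcV_1$ vanishes, and for generic $g$ so does the twisted contribution, giving $\textrm{rank}(\mcV_1)<c$, which forbids $\mcV$ from being a lattice VOA. Extremality in the sense of Section \ref{conlattice} is then a bookkeeping check on the quantity $rc/4+r(r-1)/2-6\sum_a h_a$, which one can attempt to optimize by adjusting the order of $g$ so as to tune the minimal conformal weight of the twisted modules.

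The hard part will be the construction step: guaranteeing that the cyclic orbifold preserves the braided equivalence class of the representation category. In the moonshine (trivial anyon model) case this is automatic because holomorphic VOAs equivariantize to holomorphic VOAs under anomaly-free finite-group actions. In the nontrivial abelian setting, however, the $\mbbZ/n$-crossed braided extension of $\textrm{Rep}(\mcV_L)$ can fail to equivariantize back to $(A,q)$ unless the third abelian cohomology obstruction of the $G$-action vanishes and $\hat g$ is chosen with care. A refinement of the Evans--Gannon gluing technique from \cite{EG20} in the equivariant setting, combined with the explicit symmetry groups $Aut((A,q))$ computed in Theorem \ref{Aut}, should furnish viable candidate automorphisms; settling the conjecture in full generality, while simultaneously confirming extremality for the resulting $\mcV$, is the substantive remaining challenge.
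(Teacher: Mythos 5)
You should first be aware that the paper does not prove this statement: it is posed as an open conjecture, supported only by circumstantial evidence (the census of $71$ holomorphic $c=24$ VOAs of which only $24$ are lattice theories, and the non-lattice extremal realization of the genus $(\textrm{Semion},33)$ from \cite{TW17,GT18}). So your proposal cannot be measured against a proof in the paper; it must stand on its own, and as written it is a research program rather than a proof, with several steps that are unjustified or likely to fail. The most serious gap is your very first step: the existence of a fixed-point-free isometry $g\in O(L)$ inducing the identity on the discriminant form $(A,q)$. In the moonshine case the relevant involution is $-1$ on the Leech lattice, and it induces the identity on $L^{*}/L$ only because the Leech lattice is unimodular; for a general even lattice, $-1$ acts on $A$ as $x\mapsto -x$, which is the identity only when $2A=0$. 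For a fixed small-rank $L$ realizing $(A,q)$ there is no reason such a $g$ exists at all, and enlarging the rank (padding with $E_8$ or Leech factors) to create room for one moves you out of any candidate extremal genus, undermining the conclusion you need.

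Beyond existence of $g$, two further steps you describe as routine are in fact the crux and pull against each other. Non-latticeness requires the twisted-sector contributions to $\mcV_1$ to vanish, i.e.\ the twisted-module conformal weights (determined by the eigenvalues of $g$) must exceed $1$; extremality, by contrast, rewards small conformal weights in the modules, since it demands that $rc/4+r(r-1)/2-6\sum_a h_a$ equal the smallest positive integer. Your suggestion to ``tune the order of $g$'' optimizes in opposite directions simultaneously, and by \cite{GT18} extremal VOAs are genuinely scarce --- even for the Semion model only finitely many genera contain one --- so there is no guarantee the orbifold's genus admits any extremal VOA, let alone that your orbifold is one. Finally, the equivariantization step requires the anomaly of $\hat g$ to vanish for the cyclic extension to exist and to return $\textrm{Rep}(\mcV)\cong(A,q)$; you correctly flag this, but the vanishing imposes congruence conditions on the eigenvalues of $g$ that further constrain an already possibly empty candidate set. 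In short, your FLM-orbifold strategy is a sensible and recognized line of attack on this conjecture, but each of its three pillars --- the fixed-point-free discriminant-trivial isometry, the vanishing of weight-one twisted states, and extremality in the resulting genus --- is an open problem in its own right, so the conjecture remains unproven by your argument, just as it is in the paper.
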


Besides the trivial $\mcV ec$, the genus $(\textrm{Semion}, 33)$ has a non-lattice extremal realization \cite{TW17,GT18}.  Moreover, there are only finitely many genera with extremal VOAs for the Semion model \cite{GT18}. 

It would be very interesting to classify non-lattice extemal realizations of abelian anyon models.

\bibliographystyle{plain}

\end{document}